\documentclass[10pt]{article}
\usepackage{amsmath, amsthm}
\usepackage{amscd}
\usepackage{amsfonts}
\usepackage{amssymb}
\usepackage{mathrsfs}
\usepackage{pictex}

\setlength{\hoffset}{-0.5in} \setlength{\voffset}{-0.5in}
\setlength{\oddsidemargin}{.87in}  
\setlength{\evensidemargin}{1in} \setlength{\textwidth}{5.5in}
\setlength{\textheight}{8in}

\makeindex

\newtheorem{theorem}{Theorem}[section]

\newtheorem{lemma}[theorem]{Lemma}
\newtheorem{corollary}[theorem]{Corollary}


\theoremstyle{definition}

\def\proof{{\noindent\sc Proof. \quad}}
\newcommand\proofof[1]{{\noindent\sc Proof of #1. \quad}}
\def\eproof{{\mbox{}\hfill\qed}\medskip}


\makeatletter

\renewcommand{\bar}{\overline}




\renewcommand{\hat}{\widehat}
\renewcommand{\bar}{\overline}
\renewcommand{\tilde}{\widetilde}

\def\f{{\cal F}} 

\def\1{\mbox{I\hspace{-.6em}1}} 
\def\Var{\mathop{\sf Var}}
\def\dist{\mathsf{dist}}




\def\N{\mathbb{N}}
\def\Z{\mathbb{Z}}
\def\R{\mathbb{R}}

\def\C{\mathbb{C}}

\def\E{\mathbb{E}}


\def\ll{{[\kern-1.6pt [}}
\def\rr{{]\kern-1.4pt ]}}
\def\bll{{\biggl[\kern-3pt \biggl[}}
\def\brr{{\biggr]\kern-3pt \biggr]}}

\def\dist{{\rm dist}}

\def\Id{{\rm Id}}

\def\cruz{\raise0.7pt\hbox{$\scriptstyle\times$}}

\def\alto{\rule[-2mm]{0mm}{6.5mm}}
\def\bajo{\rule[-2.5mm]{0mm}{2mm}}



\def\JACM{Journal of the ACM}

\def\TCS{Theoret. Comp. Sci.}

\def\JAMS{Journal of the Amer. Math. Soc.}

\def\JoC{J. Compl.}
\def\MP{Math. Program.}
\sloppy

\bibliographystyle{plain}


\def\scC{{\mathscr C}}

\def\bD{{\mathbf D}}

\def\Hd{\HH_{\mathbf d}}


\def\P{\mathbb P}

\newcommand{\HH}{\ensuremath{\mathcal H}}

\begin{document}

\begin{title}
{\LARGE {\bf A Numerical Algorithm for Zero Counting. \\
III: Randomization and Condition}}
\end{title}
\author{Felipe Cucker
\thanks{Partially supported by GRF grant City University
100810.}\\
Dept. of Mathematics\\
City University of Hong Kong\\
HONG KONG\\
e-mail: {\tt macucker@cityu.edu.hk}
\and
Teresa Krick
\thanks{Partially supported by grants ANPCyT 33671/05,
UBACyT X113/2008-2010 and CONICET PIP/2010-2012.}\\
Departamento de Matem\'atica\\
Univ. de Buenos Aires \&  IMAS, CONICET\\
ARGENTINA\\
e-mail: {\tt krick@dm.uba.ar}
\and
Gregorio Malajovich\thanks{Partially supported by
CNPq grants 470031/2007-7,
303565/2007-1, and by FAPERJ.
}\\
Depto. de Matem\'atica Aplicada\\
Univ. Federal do Rio de Janeiro\\
BRASIL\\
e-mail: {\tt gregorio@ufrj.br}
\and
Mario Wschebor\\
Centro de Matem\'{a}tica\\
Universidad de la Rep\'{u}blica\\
URUGUAY\\
e-mail: {\tt wschebor@cmat.edu.uy}
}

\date{}
\makeatletter
\maketitle
\makeatother

\begin{quote}
{\small
{\bf Abstract.}
In a  recent paper~\cite{CKMW1} we analyzed a
numerical algorithm for computing the number of real zeros
of a polynomial system. The analysis relied on a condition
number $\kappa(f)$ for the input system $f$. In this paper
we look at $\kappa(f)$ as a
random variable derived from imposing a probability measure
on the space of polynomial systems and give bounds for both the tail
$\P\{\kappa(f)> a\}$ and the expected value $\E(\log\kappa(f))$.
}\end{quote}

\begin{quote}
{\small
{\bf Keywords: Zero-counting, finite-precision, condition numbers,
average-case analysis, Rice formula.}
}\end{quote}

\begin{quote}
{\small
{\bf Mathematics Subject Classification: 12Y05,60G60,65Y20. }
}\end{quote}

\section{Introduction}\label{intro}

\subsection{Overview}

This paper is the third  of a series which started
with~\cite{CKMW1,CKMW09}. In the first paper of the series
we analyzed  a numerical algorithm for computing the number
of real zeros of a polynomial system. This algorithm works with
finite precision and the analysis provided bounds for both its
complexity (total number of arithmetic operations) and the
machine precision needed to guarantee that the returned
value is correct. Both bounds depended on size parameters
for the input system $f$ (number of polynomials, degrees, etc.)
as well as on a condition number $\kappa(f)$ for $f$. A
precise statement of the main result in~\cite{CKMW1}
is Theorem~1.1 therein. To the best of our knowledge, this
theorem is the only result providing a finite-precision analysis
of a zero counting algorithm. Consequently, as of today, to
understand zero-counting computations in the presence of
finite-precision appears to require an understanding of $\kappa(f)$.

Unlike the aforementioned size parameters, the condition
number $\kappa(f)$ cannot be read directly from the system $f$.
Indeed, it is conjectured that the computation of $\kappa(f)$
is at least as difficult as solving the zero counting problem for $f$,
so we need a much depper understanding of $\kappa(f)$.
In the second paper of the series~\cite{CKMW09}, we attempted
to provide such an understanding from two different angles.
Firstly, we showed that a closely related condition number
$\tilde{\kappa}(f)$ satisfies a Condition Number Theorem, i.e.,
$\tilde{\kappa}(f)$ is the normalized inverse of the distance
from $f$ to the set of ill-posed systems (those having multiple
zeros). The relation between
the quantities $\kappa(f)$ and $\tilde\kappa(f)$ is close indeed
(see~\cite[Prop.~3.3]{CKMW09}):
\begin{equation*}
  \frac{\tilde\kappa(f)}{\sqrt n} \le \kappa(f)\le
   \sqrt{2n}\ \tilde\kappa(f).
\end{equation*}
Secondly, we used this characterization, in conjunction with a
result from~\cite{BuCuLo:07}, to provide a smoothed analysis
of $\tilde{\kappa}(f)$ (and hence, of $\kappa(f)$ as well). A smoothed
analysis of the complexity and accuracy for the algorithm
in~\cite{CKMW1} immediately follows. Details about smoothed
analyses and distance to ill-posedness can be found in the
introduction of~\cite{CKMW09}.

As a consequence of the smoothed analysis of $\tilde{\kappa}(f)$
one immediately obtains an average-case analysis of this condition
number. One is left, however, with the feeling that the bounds
thus obtained are far from optimal. Indeed, these bounds follow
from a result which is general in two aspects. Firstly, it is
a smoothed analysis (of which usual average analysis is just a
particular case). Secondly, it is derived from a very general result
yielding smoothed analysis bounds for condition numbers satisfying
a Condition Number Theorem and stated in terms of some geometric
invariants (degree and dimension) of the set of ill-posed inputs.
The question of whether a finer average analysis can be obtained by
using methods more ad-hoc for the problem at hand naturally
poses itself.

In this paper we show that such bounds are possible. Loosely
speaking, the average analysis in~\cite{CKMW09} shows a bound
for a typical $\tilde{\kappa}(f)$ - or $\kappa (f)$ - which is of order  ${\mathcal D}^2$
where ${\mathcal D}$ is the B\'ezout number of $f$. Here we show
that $\sqrt{\mathcal D}$ is
a more accurate upper-bound. This improvement is meaningful, since $\mathcal D$ increases exponentially with $n$.
Our main result implies that if the maximum degree $\bD$ remains bounded as $n$
grows, $\E (\ln \kappa (f)) $ is bounded from above by a quantity
equivalent to  $\ln (\mathcal{D}^{1/2})$, which according to the
Shub-Smale Theorem, see \cite{Bez2}, equals the logarithm of the mathematical expectation
of the total number of real roots of the polynomial system. More
precisely,
$$
\limsup_{n\to \infty}  \frac{\E (\ln \kappa (f))}{\ln (
\mathcal{D}^{1/2})}\le 1.
$$

No non-trivial lower bound has been obtained for the time being as far as we know.

We next proceed to set up the
basic notions and notations enabling us to state the above in
more precise terms.

\subsection{Basic definitions and main result}\label{sec:notations}

For $d\in \N$ we denote by $\HH_d$ the subspace of
$\R[x_0,\ldots,x_n]$ of homogeneous polynomials of degree $d$ and,
for $\mathbf{d}:=(d_1,\dots,d_n)$, we set $\Hd:=\HH_{d_1}\times
\cdots\times \HH_{d_n}$. We endow $\HH_d$ with the Weyl norm
which is defined, for $f\in\HH_d$, $f(x)=\sum_{|j|=d}a_jx^j$, by
$$
   \|f\|_W^2=\sum _{|j|=d}\frac{a_j^2}{{d\choose j}}
$$
where $x=(x_0,\dots,x_n)$, $j=(j_0,\dots,j_n)$,
$|j|:=j_0+\cdots+j_n$, $x^j=x_0^{j_0}\cdots x_n^{j_n}$ and
$ {d\choose j}:=\frac{d!}{j_0!\cdots j_n!}$. We then endow
$\Hd$ with the norm given by
$$
    \|f\|:=\displaystyle{\max_{1\le i\le n}\|f_i\|_W}.
$$

For $f=(f_1,\dots,f_n)\in \Hd$, as in \cite{CKMW1}, we define the
following condition number
$$
   \kappa(f)=\max_{x\in S^n} \min
     \left\{\mu_{\rm norm}(f,x),\frac{\|f\|}{\|f(x)\|_\infty}\right\}
$$
with
$$
   \mu_{\rm norm}(f,x)
  =\sqrt{n}\,\|f\|\, \left\| D_x(f)^{-1} M \right\|.
$$
Here
\begin{itemize}
\item
$D_x(f)=Df(x)|_{T_xS^n}$ is
the derivative of $f$ along the unit sphere $S^n\subset\R^{n+1}$
at the point $x$, a linear operator
from the tangent space $T_x(S^n)$ to $\R^n$,
\item
$M:={\scriptstyle \left[
  \begin{array}{ccc} \sqrt{d_1} \\
   &  \ddots & \\ &  & \sqrt{d_n}
  \end{array} \right]}$
is the scaling $n\times n$ diagonal matrix with diagonal entries
the square roots of the degrees $d_i=\deg(f_i)$,
\item
the norm $\|D_x(f)^{-1} M\|$ is the spectral
norm,  i.e., the operator norm
$\max\{ \|D_x(f)^{-1} M\, y\|_2; y\in S^n, y\perp x\}$ with
respect to $\|\ \|_2$,
\item
$\|f(x)\|_\infty =\max_{1\le i\le n} |f_i(x)|$ denotes as usual the
infinity norm.
\end{itemize}
\bigskip

We next impose the probability measure on $\Hd$ defined
by Eric Kostlan~\cite{KostlanPhD} and Shub-Smale~\cite{Bez2}.
This measure assumes the
coefficients of the polynomials $f_i=\sum_{|j|=d_i} a_j^{(i)} x^j$
are independent, Gaussian, centered random variables, with variances
$$
      \Var(a_j^{(i)})={  d_i \choose j }.
$$
For this distribution, and for $x,y \in \R ^{n+1}, 1\le i,k\le n$,
covariances are given by (see Lemma \ref{lemcovar} below)
$$
  \E \big( f_i(x)f_{k}(y)\big)= \delta_{ik}\langle x,y \rangle ^{d_i}
$$
where $\delta_{ik}$ is the Kronecker symbol.

This probability law is invariant under the action of the
orthogonal group and permits to perform the computations below,
which appear to be much more complicated under other distributions
not sharing this invariance property.
\medskip

To state our main results a number of quantities will be
useful. We use the notation
$$
  \bD:=\max_{1\le i\le n} d_i,\quad \mathcal{D}=\prod_{i=1}^n d_i, \quad N := \dim \Hd = \sum_{i=1}^n{n+d_i\choose n}.
$$
We note that   $\mathcal{D}$ is the B\'{e}zout number of the
polynomial system. We may assume here that $d_i\ge 2$ for $1\le i\le
n$ since otherwise we could restrict to a system with fewer equations
and unknowns. Notice that
  $N\le n^{\bD+2}$.

\bigskip

We are now ready to state our main result.

\begin{theorem}\label{boundkapa}
Let the random system $f$ satisfy the conditions of the Shub-Smale
model and assume $n\geq 3$. Then,
\begin{description}
\item[(i)]
For $a > 4\sqrt 2\,\bD^2n^{7/2}N^{1/2}$ one has
$$
  \P \big( \kappa (f)>a \big)\leq K_n \frac{\sqrt{2n}(1+\ln
  (a/\sqrt{2n}))^{1/2}}{a},
$$
where $K_n:=8\bD^2{\mathcal{D}}^{1/2}\,{N}^{1/2}n^{5/2}+1$.
\item[(ii)]
$$
  \E (\ln \kappa (f))\leq \ln K_n + (\ln K_n)^{1/2}+(\ln
  K_n)^{-1/2} + \frac{1}{2}\ln(2n).
$$
\end{description}
\end{theorem}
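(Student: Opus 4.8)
The plan is to control $\kappa(f)$ by the two quantities appearing in its definition, $\mu_{\mathrm{norm}}(f,x)$ and $\|f\|/\|f(x)\|_\infty$, and then integrate. The key idea is that $\kappa(f) = \max_{x\in S^n}\min\{\mu_{\mathrm{norm}}(f,x), \|f\|/\|f(x)\|_\infty\}$ is large only if there is a point $x$ where \emph{both} $\mu_{\mathrm{norm}}(f,x)$ is large \emph{and} $\|f(x)\|_\infty$ is small relative to $\|f\|$. I would first set up, for fixed $x\in S^n$ and threshold $a$, a tail bound on the event $\mathcal{B}_x(a) := \{\mu_{\mathrm{norm}}(f,x) > a \text{ and } \|f\|/\|f(x)\|_\infty > a\}$ using the Kostlan--Shub--Smale Gaussian structure: after conditioning on $\|f\|$ (or bounding it above with overwhelming probability, using that the squared Weyl norms are $\chi^2$-type sums), the orthogonal invariance lets me take $x = e_0$, under which $f_i(x)$ and $D_x(f_i)$ become independent Gaussians with explicit variances involving $d_i$. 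From there the condition on $\|D_x(f)^{-1}M\|$ becomes a smallest-singular-value bound on a Gaussian matrix scaled by $M$, which gives a bound of the form $\mathbb{P}\{\mu_{\mathrm{norm}}(f,x) > a \mid \|f\|\} \lesssim (\text{poly})\cdot \|f\|^? / a$, and the condition $\|f(x)\|_\infty$ small contributes an independent factor of comparable shape. Multiplying these and integrating out $\|f\|$ should yield a pointwise bound $\mathbb{P}(\mathcal{B}_x(a)) \le (\text{poly in } n,\bD,\mathcal{D},N)/a$ — crucially only $1/a$, not $1/a^2$, because to get below the threshold $4\sqrt2\,\bD^2 n^{7/2}N^{1/2}$ one of the two factors is already essentially forced.

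Next I would pass from a fixed $x$ to the supremum over $S^n$. This is the step I expect to be the main obstacle, and it is presumably where the Rice formula advertised in the keywords enters: rather than a crude union bound or $\epsilon$-net argument (which would cost a factor exponential in $n$, destroying the $\sqrt{\mathcal D}$ improvement), one wants an integral-geometric identity. The natural route is to write $\{\kappa(f) > a\}$ in terms of the existence of a zero of an auxiliary random field on $S^n$ — e.g., relating large $\kappa$ to $f$ having a point $x$ that is a near-zero with ill-conditioned Jacobian, hence to the number of zeros of a perturbed system — and then apply a Rice/Kac--Rice formula to bound the expected number of such points by an integral over $S^n$ of the pointwise densities estimated in the first step. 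Because the Kac--Rice integrand factorizes through the value $f(x)=0$ and the normal derivative, and because of orthogonal invariance the integrand is constant in $x$, one gets $\mathrm{vol}(S^n)$ times a pointwise quantity, with no exponential blow-up; the Shub--Smale count of real roots ($\mathcal D^{1/2}$ on average) is exactly what appears, explaining the shape of $K_n$.

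Once part (i) is in hand with the stated constant $K_n = 8\bD^2\mathcal D^{1/2}N^{1/2}n^{5/2}+1$, part (ii) is a routine tail-integration. Writing $\mathbb{E}(\ln\kappa(f)) = \int_0^\infty \mathbb{P}(\ln\kappa(f) > t)\,dt$ and splitting at $t_0 = \ln K_n$: on $[0,t_0]$ bound the probability by $1$, contributing $\ln K_n$; on $(t_0,\infty)$ use part (i), which (after checking $K_n$ exceeds the threshold $4\sqrt2\,\bD^2 n^{7/2}N^{1/2}$ for $n\ge 3$ — immediate since $\mathcal D^{1/2}\ge 2^{n/2} \ge n$ in the relevant range) gives $\mathbb{P}(\ln\kappa > t) \le K_n\sqrt{2n}(1+\ln(e^t/\sqrt{2n}))^{1/2}e^{-t}$, and the integral $\int_{t_0}^\infty K_n\sqrt{2n}(1+t-\tfrac12\ln(2n))^{1/2}e^{-t}\,dt$ evaluates, via the substitution $s = t - t_0$ and the elementary estimate $\int_0^\infty (c+s)^{1/2}e^{-s}\,ds \le \sqrt c + c^{-1/2}$ for $c\ge 1$ (here $c \approx \ln K_n$ after the $\sqrt{2n}$ cancels against the $1/a$ prefactor), to at most $(\ln K_n)^{1/2} + (\ln K_n)^{-1/2}$. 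Collecting the three contributions, plus the additive $\tfrac12\ln(2n)$ absorbed from the $\sqrt{2n}$ normalization inside the logarithm, yields exactly the claimed bound. The only care needed is tracking the $\sqrt{2n}$ factors so that they cancel cleanly and produce the final $\tfrac12\ln(2n)$ term rather than something larger.
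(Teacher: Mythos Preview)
Your proposal has the right high-level ingredients (orthogonal invariance, large-deviation control of $\|f\|$, Rice-type formula, tail integration for the expectation), but it diverges from the paper's argument in a structurally important way and contains a genuine gap in the Rice step.

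\medskip

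\textbf{The reduction you are missing.} The paper does \emph{not} prove Theorem~\ref{boundkapa} by attacking $\kappa(f)$ directly. It first passes to the $L^2$-type condition number
\[
\tilde\kappa(f)=\frac{\|f\|_W}{\bigl(\min_{x\in S^n}\{\|D_x(f)^{-1}M\|^{-2}+\|f(x)\|_2^2\}\bigr)^{1/2}},
\]
proves the analogous tail and expectation bounds for $\tilde\kappa$ (Theorem~\ref{boundkapatilde}), and then deduces Theorem~\ref{boundkapa} in one line from $\kappa(f)\le\sqrt{2n}\,\tilde\kappa(f)$. This is exactly where the stray $\sqrt{2n}$ and the additive $\tfrac12\ln(2n)$ in part~(ii) come from. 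The reason for passing to $\tilde\kappa$ is that its denominator is the square root of the minimum of the \emph{smooth} random field $L(x,y)=\|M^{-1}D_x(f)y\|^2+\|f(x)\|_2^2$ over the Stiefel manifold $V=\{(x,y):\|x\|=\|y\|=1,\ \langle x,y\rangle=0\}$; this $L^2$ form is what makes the subsequent analysis tractable. Your plan to work pointwise with the $\min\{\mu_{\mathrm{norm}},\|f\|/\|f(x)\|_\infty\}$ structure directly would run into the non-smoothness of the $\min$ and the $\ell^\infty$ norm.

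\medskip

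\textbf{The Rice step is not a zero count on $S^n$.} Your sketch proposes to recast $\{\kappa(f)>a\}$ as the existence of a zero of some auxiliary field on $S^n$ and then apply Kac--Rice. That is not what happens, and it is not clear how to make it work. The paper instead bounds the density of $\underline L=\min_V L$ via
\[
\P(\underline L<\alpha)\le \E\bigl(m_\alpha(L,V)\bigr),
\]
the expected number of \emph{local minima} of $L$ on $V$ with value below $\alpha$, and applies a Rice formula for critical points of a field on a $(2n-1)$-dimensional manifold. Two subtleties are essential and absent from your outline: (a) the domain is the Stiefel manifold $V$, not $S^n$, because the smallest singular value introduces an extra unit direction $y\perp x$; and (b) the naive Rice integrand is degenerate (the joint density of $(L,\nabla L)$ at the relevant point is $+\infty$ while the conditional expectation vanishes), and the paper repairs this by conditioning additionally on an auxiliary Gaussian vector $\zeta$ built from $(\pi_{x,y}f_i'(x),\partial_{yy}f_i(x))$. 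Without this device the formula is meaningless. The $\mathcal D^{1/2}$ in $K_n$ does \emph{not} arise from the Shub--Smale expected root count; it comes out of explicit determinant computations for the Hessian block matrix $\tilde L''$ after this conditioning.

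\medskip

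Your part~(ii) plan (split the tail integral at $\ln K_n$ and use $\int_0^\infty (c+s)^{1/2}e^{-s}\,ds\le c^{1/2}+c^{-1/2}$) matches the paper's Step~7 and is fine once part~(i) is available; but note the threshold check should compare $K_n$ to $4\bD^2 n^3 N^{1/2}$ (the $\tilde\kappa$ threshold), which the paper verifies via $2\mathcal D^{1/2}\ge n^{1/2}$.
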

\bigskip

In fact we are going to prove the corresponding result for the
alternative quantity $\tilde\kappa(f)$ already considered in
\cite{CKMW09}, since it will enable us  to use $\mathbb{L}^2$
methods, which are more adapted to the type of calculations we will
perform. We recall that
$$
   \tilde\kappa(f)=\frac{\|f\|_W}
   {\big(\min_{x\in S^n} \{\| D_x(f)^{-1}M\|^{-2}
          +\|f(x)\|_2^2\}\big)^{1/2} }
$$
where $\|f\|^2_W:=\sum_{1 \le i\le n} \|f_i\|_W^2$ is the Weyl norm
of the system and $\|f(x)\|^2:=\sum_{1\le i\le n} f_i(x)^2$ denotes
the usual Euclidean norm. As we have already mentioned,
we have
$\frac{\tilde\kappa(f)}{\sqrt n} \le \kappa(f)\le\sqrt{2n}\ \tilde\kappa(f)$.
Also,  as a consequence of \cite[Th.~1.1]{CKMW09}, $\tilde \kappa(f)$
 satisfies $\tilde \kappa(f)\ge 1$ for all $f \in \Hd$.\\

We will therefore obtain Theorem~\ref{boundkapa} as a direct
consequence of the following result.

\begin{theorem}\label{boundkapatilde}
Let the random system $f$ satisfy the conditions of the Shub-Smale
model and assume $n\geq 3$. Then,

\begin{description}
\item[(i)]
For $a>4\,\bD^2n^3N^{1/2}$ one has
$$
\P \big( \tilde\kappa (f)>a \big)\leq K_n \frac{(1+\ln a)^{1/2}}{a}
$$
where $K_n:=8\bD^2{\mathcal{D}}^{1/2}\,{N}^{1/2}n^{5/2}+1$.

\item[(ii)]
$$
     \E (\ln \tilde\kappa (f))\leq \ln K_n + (\ln K_n)^{1/2}+(\ln K_n)^{-1/2}.
$$
\end{description}
\end{theorem}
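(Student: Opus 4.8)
The plan is to derive the tail bound for $\tilde\kappa(f)$ via a Rice-formula computation, and then deduce part (ii) by integrating the tail. I would start by using the characterization of $\tilde\kappa(f)$ in terms of the quantity $\nu(f,x):=\|D_x(f)^{-1}M\|^{-2}+\|f(x)\|_2^2$ to observe that $\tilde\kappa(f)>a$ iff $\min_{x\in S^n}\nu(f,x)<\|f\|_W^2/a^2$, i.e., the scaled function $f$ stays, at some point of the sphere, too close to the ill-posed locus. The natural approach is to cover $S^n$ (or work projectively on $\proj^n$) and estimate, for a fixed $x$, the probability that $\nu(f,x)$ is small, then integrate over $x$ using the orthogonal invariance of the Kostlan measure, which reduces everything to a computation at a single point $x=e_0$. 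At $e_0$, $f_i(e_0)$ and the partial derivatives $\partial_j f_i(e_0)$ are independent Gaussians with explicit variances coming from the Weyl inner product, so $\nu(f,e_0)$ becomes a concrete function of a Gaussian vector and its smallness probability can be bounded by anti-concentration estimates for the smallest singular value of the (Gaussian) matrix $D_{e_0}(f)^{-1}M$ together with a bound on $\|f(e_0)\|_2$.

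The heart of the argument, however, must handle the $\max$ (equivalently $\min$) over the continuum $S^n$, and this is where the Rice formula enters — hence the keyword in the abstract. I would introduce, for a threshold $\e>0$, the random set of points $x$ where $\nu(f,x)=\e$ (a level set of a smooth random field on $S^n$) and use a Rice-type formula to bound the expected number of connected components, or more directly, bound $\P\{\min_x \nu(f,x)<\e\}$ by the expected number of local minima of $\nu(f,\cdot)$ that lie below level $\e$. The operator $\locmin$ declared in the preamble signals exactly this: one writes $\P\{\min \nu < \e\}\le \E\,\#\{x\in\locmin(\nu(f,\cdot)):\nu(f,x)<\e\}$ and then evaluates this expectation by a Kac–Rice integral $\int_{S^n}\E[\,|\det \text{Hess}|\cdot \mathbf{1}\{\nu<\e\}\,\big|\,\n\nu=0\,]\,p_{\n\nu}(0)\,dx$. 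Invariance collapses the integral to $\text{vol}(S^n)$ times the integrand at $e_0$, and the remaining task is to bound a Gaussian conditional expectation involving $f(e_0)$, $Df(e_0)$, and $D^2f(e_0)$. Careful bookkeeping of the variances (the Weyl-norm normalization makes the second derivatives contribute factors of $d_i$, hence the $\bD$ and $\mathcal D$ powers) yields the constant $K_n$.

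For part (ii), once (i) gives $\P\{\tilde\kappa(f)>a\}\le K_n(1+\ln a)^{1/2}/a$ for $a$ past the stated threshold $a_0:=4\bD^2 n^3 N^{1/2}$, and since $\tilde\kappa(f)\ge 1$ always, I would write
$$
\E(\ln\tilde\kappa(f))=\int_0^\infty \P\{\ln\tilde\kappa(f)>t\}\,dt=\int_0^\infty \P\{\tilde\kappa(f)>e^t\}\,dt,
$$
split the integral at $t_0:=\ln K_n$ (one checks $e^{t_0}=K_n$ exceeds the threshold $a_0$, using $n\ge 3$ and $d_i\ge 2$), bound the contribution on $[0,t_0]$ trivially by $t_0=\ln K_n$, and on $[t_0,\infty)$ substitute the tail bound to get $\int_{t_0}^\infty K_n(1+t)^{1/2}e^{-t}\,dt$. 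Evaluating this last integral — $K_n e^{-t_0}\int_0^\infty (1+t_0+s)^{1/2}e^{-s}\,ds$ with $K_n e^{-t_0}=1$ — and using $(1+t_0+s)^{1/2}\le (1+t_0)^{1/2}+ s^{1/2}\cdot(\text{something})$ type inequalities, or more cleanly $\int_0^\infty(a+s)^{1/2}e^{-s}ds\le \sqrt a + \tfrac12 a^{-1/2}+\cdots$ via a Taylor/concavity estimate, produces the terms $(\ln K_n)^{1/2}+(\ln K_n)^{-1/2}$. Finally Theorem~\ref{boundkapa} follows from Theorem~\ref{boundkapatilde} by plugging the comparison $\kappa(f)\le\sqrt{2n}\,\tilde\kappa(f)$: the tail at level $a$ for $\kappa$ becomes the tail at level $a/\sqrt{2n}$ for $\tilde\kappa$, and $\ln\kappa(f)\le \tfrac12\ln(2n)+\ln\tilde\kappa(f)$ gives part (ii).

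I expect the main obstacle to be the Kac–Rice step: getting a clean, non-catastrophic bound on the conditional expectation of $|\det\text{Hess}\,\nu|$ given $\n\nu=0$ and $\nu<\e$, since $\nu$ is a ratio-type functional of $f$ (involving $\|D_x(f)^{-1}M\|^{-2}$, i.e., the smallest singular value squared) rather than a polynomial, so its derivatives are messy and one must either work with a smooth surrogate or exploit the structure of the smallest-singular-value function. Controlling the joint Gaussian behavior of $f(e_0)$, its first and second derivatives — and in particular the correlations introduced by the Weyl metric — with enough precision to land the exponent $\mathcal D^{1/2}$ rather than $\mathcal D$ is the crux of the improvement over \cite{CKMW09}.
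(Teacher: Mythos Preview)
Your high-level plan --- bound $\P\{\underline L<\alpha\}$ by the expected number of sub-$\alpha$ local minima via a Kac--Rice formula, exploit orthogonal invariance to collapse the integral to one point, then integrate the tail for (ii) --- is the paper's plan, and your derivation of (ii) from (i) (split the integral at $\ln K_n$, check $K_n>a_n$, use $(1+x)^{1/2}\le x^{1/2}+\tfrac12 x^{-1/2}$ and integrate by parts) is exactly what the paper does in Step~7. There are, however, two genuine gaps in your Rice step.

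First, you apply Kac--Rice directly to $\nu(f,x)=\sigma_{\min}(M^{-1}D_xf)^2+\|f(x)\|^2$ on $S^n$. As you yourself flag, this field is not smooth: $\sigma_{\min}^2$ is a minimum over the unit tangent sphere and has no usable Hessian where the bottom singular value is degenerate. The paper's ``smooth surrogate'' is to \emph{lift} to the Stiefel manifold $V=\{(x,y):\|x\|=\|y\|=1,\ \langle x,y\rangle=0\}$ and work with
\[
L(x,y)=\|M^{-1}D_x(f)\,y\|^2+\|f(x)\|^2,
\]
a quadratic polynomial in the Gaussian coefficients of $f$; since $\underline L=\min_{(x,y)\in V} L(x,y)$, counting local minima of $L$ on $V$ dominates the event $\{\underline L<\alpha\}$. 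This lift is the missing idea that makes the Hessian computation tractable. Second, even on $V$ the naive Rice integrand $\E\big[|\det \tilde L''|\chi_{\tilde L''\succ0}\mid \tilde L=u,\tilde L'=0\big]\,p_{\tilde L,\tilde L'}(u,0)$ is an indeterminate $0\cdot\infty$: the condition $\tilde L'=0$ forces $\partial_1 f_i=0$ for all $i$ almost surely, so the density blows up while the conditional expectation collapses. The paper resolves this by inserting an auxiliary Gaussian vector $\zeta=\big((\partial_j f_i)_{j\ge2},\partial_{11}f_i\big)$ into the conditioning, writing the Rice formula conditionally on $\zeta=z$, and only then integrating $z$ out; this factors the integrand into explicit Gaussian densities times an \emph{unconditional} determinant expectation (their Lemma~\ref{condincond}). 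You also omit the preliminary large-deviations step (their Lemma~\ref{largedev}) that replaces the random numerator $\|f\|_W^2$ by the deterministic $(1+\ln a)N$; this is what produces the factor $(1+\ln a)^{1/2}$ in the tail bound and the ``$+1$'' in $K_n$.
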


\medskip \noindent  Theorem~\ref{boundkapa} follows from  $\P \big(\kappa(f)> a\big)\le
\P\big(\tilde \kappa(f)> a/\sqrt{2n}\big)$, since $\kappa(f)>a
\Rightarrow \tilde\kappa(f)  >a/\sqrt{2n}$.\\

\medskip

The proof of Theorem~\ref{boundkapatilde} is given in
Section~\ref{demostracion}. It requires a
certain number of auxiliary results. With the aim of isolating (and in this
way highlighting) the main ideas, we will postpone the proof of these auxiliary results  to
Section~\ref{auxlemmas}, though  stating them  as needed in the text.  This  will be indicated  by the symbol
$\diamondsuit$ at the end of the statement.

\subsection{Relations with previous work}

Probably the most successful combination of algorithmics, conditioning,
and probability occurs in the study of complex polynomial systems (a
setting similar to ours but with the coefficients of the polynomials now
drawn from $\C$ and considering projective complex zeros). This  study
spans an impressive collection of papers, which began
with~\cite{Bez1,Bez2,Bez3,Bez4,Bez5} and continued
in~\cite{BePa08} and~\cite{Bez6,Bez7}.
The final outcome of these
efforts is a randomized algorithm producing an approximate zero of
the input system in  expected time which is polynomial in the size
of the system. The expectation is with respect to {\em both} the
random choices in the algorithm and a probability measure on the
input data.

The condition number of a system $f$ in this setting is defined to be
\begin{equation*}\label{muC}
  \mu_{\rm norm}(f):=\max_{\zeta\in S^n_{\C}\mid f(\zeta)=0}
  \mu_{\rm norm}(f,\zeta).
\end{equation*}
Here $\mu_{\rm norm}(f,\zeta)$ is roughly the quantity we defined
above.  Over the reals, it may not be well-defined since the zero
set of $f$ may be empty. If one restricts attention to the subset
${\mathcal R}_{\mathbf{d}}\subset\Hd$ of those systems having at
least a real zero one may similarly define a measure $\mu_{\rm
worst}(f)$, maximizing over the set of real zeros. This has been
done in~\cite{BoPa08} where bounds for the tail and the expected
value of $\mu_{\rm worst}(f)$ are given. These bounds are very
satisfying (for instance, the tail $\P \big(\mu_{\rm worst}>a \big)$
is bounded by an expression in $a^{-2}$, a fact ensuring the
finiteness of $\E(\mu_{\rm worst}(f))$). The measure $\mu_{\rm
worst}(f)$, however, is hardly a condition number for the problem of
real zeros counting, not even restricted to the subset ${\mathcal
R}_{\mathbf{d}}$. To understand why, consider a polynomial as in the
left-hand side of the figure below.
\begin{center}
  \input curva_cond.pictex
\end{center}
For this polynomial one has $\mu_{\rm worst}=\infty$.

An upward small perturbation (as in the right-hand side) yields a
low value of $\mu_{\rm worst}$. This value admits a finite limit
when such perturbations are small enough!
The measure
$\mu_{\rm worst}(f)$ appears to be insensitive to the closeness
to ill-posedness. This runs contrary to the notion of
conditioning~\cite{Demmel87,FreundVera,Rump,Wilkinson72}.

A condition number $\mu^*(f)$ for the feasibility problem of real
systems (which, obviously, needs to be  defined on all of $\Hd$)
was given in~\cite{CS98} by taking
$$
   \mu^*(f)=\left\{\begin{array}{ll}
       \displaystyle\min_{\zeta\in S^n\mid f(\zeta)=0}
       \mu_{\rm norm}(f,\zeta) &
       \mbox{if $f\in {\mathcal R}_{\mathbf{d}}$}\\  [10pt]
      \displaystyle\max_{x\in S^n} \frac{\|f\|}{\|f(x)\|}  &
       \mbox{otherwise.}
       \end{array}\right.
$$
As of today, there is no probabilistic analysis for it.

\section{Proof of Theorem \ref{boundkapatilde}}\label{demostracion}

The proof relies on the so-called Rice Formula for the expectation
of the number of local minima of a real-valued random field. This is
described precisely in Step 2 below. Previously, in Step 1, we use
large deviations to show that for large $n$, except on a set of small probability, the numerator $\|f\|_W$ in  $\tilde\kappa(f)$ is nearly equal to $N^{1/2}$. Steps 3, 4, and 5 estimate the different expressions
occurring in Rice formula. Finally, Step 6 wraps up all these
estimates to yield the upper bound for the density and Step~7
derives from it the  bounds claimed in the statement of
Theorem~\ref{boundkapatilde}.\\

\noindent During the rest of the proof, we set
$\underline{L}=\underline{L}(f):=\min_{x\in S^n}
\{\|D_x(f)^{-1}M\|^{-2}+\|f(x)\|_2^2 \}$ so that
$\tilde\kappa(f)=\|f\|_W/\sqrt{\underline{L}}$. We observe that
$$
    \| D_x(f)^{-1}M\|^{-1}
   =\sigma_{\min} (M^{-1}D_x(f))
   =\min\{\|M^{-1}D_x(f)y\|: y\in S^n, y\perp x\},
$$
(where $\sigma _{\min}$ denotes the minimum singular value),
and therefore
$$
   \underline{L}=
  \min\{\|M^{-1}D_x(f)y\|^{2}+\|f(x)\|_2^2:
   x,y\in S^n, y\perp x \}
$$
is the minimum of the random field $\{L(x,y): (x,y)\in V\}$ where
\begin{eqnarray} \label{randomfield}
   L(x,y)&:=&\|M^{-1}D_x(f)y\|^{2}+\|f(x)\|_2^2,
  \nonumber  \\
  &=&\sum_{i=1}^n \frac{1}{d_i}\left(\sum_{j,k=0}^n
       \partial_jf_i(x)\partial_kf_i(x)y_jy_k\right) +
       \sum_{i=1}^n f_i^2(x); \\
  \mbox{and}\qquad
V  &:=&  \{(x,y)\in \R^{n+1}\times\R^{n+1}: \|x\|=\|y\|=1,
   \langle x,y\rangle=0\}. \nonumber
 \end{eqnarray}
Here $y=(y_0,\dots,y_n)$ and, for $1\le i\le n$ and  $0\le j\le n$,
$\partial_jf_i(x)$ denotes the partial derivative of $f_i$
with respect to $x_j$ at the point $x$.

\bigskip
\noindent\textbf{Step 1.} Our first step consists in replacing the
Weyl norm in the numerator of $\tilde\kappa(f)$ by a non-random
constant, at the cost of adding a
small probability, which will be controlled using large deviations.\\

\noindent Let $a>1$. We have
$$
     \P \left(\tilde\kappa (f)>a \right)
 = \P \left(\frac{\underline{L}}{\|f\|_W^2}<\frac{1}{a^2} \right)
 \leq \P\left(\underline{L}<\frac{1}{a^2} (1+\ln a )N \right)
     +\P\Big(\|f\|_W^2 \geq (1+\ln a)N \Big).
$$
We bound the second term in the right-hand side above using the following result that will be proved in Section~\ref{auxlemmas}.
\begin{lemma}\label{largedev}
Set
$$
   N:=\dim \Hd= \sum_{i=1}^n{
                    n+d_i \choose
                    n }
$$
Then, for $\eta >0$,
$$
   \P \left(\|f\|_W^2 \geq (1+\eta)N \rule{0em}{3ex}\right)
  \leq e^{-\frac{N}{2}(\eta-\ln(\eta +1))}.\qquad  \diamondsuit
$$
\end{lemma}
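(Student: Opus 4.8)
The plan is to identify the law of $\|f\|_W^2$ exactly — it is a chi-square variable with $N$ degrees of freedom — and then apply the classical Cramér--Chernoff tail bound with the optimal exponential tilt.

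First I would unwind the definitions. Writing $f_i=\sum_{|j|=d_i}a_j^{(i)}x^j$, we have
$$
  \|f\|_W^2=\sum_{i=1}^n\|f_i\|_W^2=\sum_{i=1}^n\sum_{|j|=d_i}\frac{(a_j^{(i)})^2}{\binom{d_i}{j}}.
$$
Under the Shub--Smale model the coefficients $a_j^{(i)}$ are independent, centered Gaussian with $\Var(a_j^{(i)})=\binom{d_i}{j}$, so each $\xi_j^{(i)}:=a_j^{(i)}/\sqrt{\binom{d_i}{j}}$ is a standard normal, these variables are mutually independent, and there are exactly $\sum_{i=1}^n\binom{n+d_i}{n}=N$ of them. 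Hence $\|f\|_W^2=\sum_{i,j}(\xi_j^{(i)})^2$ follows a $\chi^2_N$ law.

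Next, for $Z\sim\chi^2_N$ one has the moment generating function $\E(e^{tZ})=(1-2t)^{-N/2}$ valid for $0<t<1/2$. By Markov's inequality, for every such $t$,
$$
  \P\big(Z\ge(1+\eta)N\big)\le e^{-t(1+\eta)N}(1-2t)^{-N/2}.
$$
The exponent $-t(1+\eta)N-\tfrac{N}{2}\ln(1-2t)$ is convex in $t$ and its derivative vanishes at $t^\ast=\tfrac12\cdot\tfrac{\eta}{1+\eta}$, which lies in $(0,1/2)$ precisely because $\eta>0$; substituting $t=t^\ast$, so that $1-2t^\ast=(1+\eta)^{-1}$, the bound collapses to $e^{-\eta N/2}(1+\eta)^{N/2}=e^{-\frac{N}{2}(\eta-\ln(1+\eta))}$, which is the asserted inequality.

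There is essentially no serious obstacle. The only point requiring care is the distributional identity $\|f\|_W^2\sim\chi^2_N$: one must check that the Weyl weights $1/\binom{d_i}{j}$ cancel exactly the Kostlan variances $\binom{d_i}{j}$ and that the monomials of $f_1,\dots,f_n$ number exactly $N$. After that, the tail estimate is the textbook Chernoff bound for a chi-square, and one may either carry out the one-line optimization above or simply quote a standard reference; note also that $\eta-\ln(1+\eta)>0$ for $\eta>0$, so the bound is genuinely exponentially small, which is what is needed in Step~1.
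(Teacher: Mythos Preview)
Your proof is correct and follows essentially the same approach as the paper: both identify $\|f\|_W^2$ as a sum of $N$ independent squared standard normals and then apply the Cram\'er--Chernoff bound. The only cosmetic difference is that the paper phrases the optimization in terms of the Fenchel--Legendre transform $\Lambda^\ast(x)=\tfrac12(x-\ln(1+x))$ of $\xi^2-1$ and cites a large-deviations reference, whereas you carry out the equivalent exponential-tilt optimization directly.
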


\noindent Therefore, setting $\eta=\ln a$, we  obtain
\begin{equation}\label{des1}
   \P \left(\tilde\kappa (f)>a \right)\leq \P
   \left(\underline{L}<\frac{1}{a^2} (1+\ln a )N \right)+
   \exp\left(-\frac{N}{2}(\ln a - \ln (\ln a +1)\right).
\end{equation}
The second term in the right-hand side above can be easily
estimated. We therefore turn our attention to the first. Given
$\alpha>0$, we want to compute an upper bound for
$$
     \P \left(\underline{L}<\alpha \right).
$$

\bigskip \noindent\textbf{Step 2}. Our second step consists in giving a bound
 for the density function $p_{\underline{L}}(u)$  of the
random variable $\underline{L}$, i.e.  such that $$ \P
\left(\underline{L}<\alpha \right)=\int_{0}^{\alpha}
p_{\underline{L}}(u)du$$ since $\underline{L}$ is non-negative. We recall that the quantity $\underline{L}$ is the minimum
of the random field $\{ L(x,y):(x,y) \in V \}$,  for $L$ and $V$
defined in Formula~(\ref{randomfield}).\\

\noindent Notice that $V$ is the Stiefel manifold $S(2,n+1)$, a
compact, orientable, $\scC^{\infty}$-differentiable manifold of
dimension $2n-1$, embedded in $\R^{n+1}\times \R^{n+1}$. For each
linear orthogonal transformation $U$ of $\R^{n+1}$, define
$\tilde{U}:V\rightarrow V$, $(x,y)\mapsto (Ux,Uy)$, and denote by
$\tilde{\mathcal{U}}$ the set of these $\tilde{U}$ provided with the
group structure naturally inherited from the orthogonal group in
$\R^{n+1}$. Then $\tilde{\mathcal{U}}$ acts transitively
on $V$.\\

\noindent At a generic point $(x,y)$ of the manifold $V$, the normal
space $N_{(x,y)}(V)$ has dimension $(2n+2)-(2n-1)=3$, and  is
generated by the orthonormal set
$\left\{(x,0),(0,y),\frac{1}{\sqrt{2}}(y,x) \right\}$. Therefore, if
$\{z_2,\ldots,z_n\} \subset \R^{n+1}$ is such that
$\{x,y,z_2,\ldots,z_n \}$ is an orthonormal basis of $\R^{n+1}$, the
set
\begin{equation}\label{baseestan}
  \mathcal{B}_{T_{(x,y)}}:=
   \left\{(z_2,0),\ldots,(z_n,0),(0,z_2),\ldots,(0,z_n),
   \frac{1}{\sqrt{2}}(y,-x)\right\}
\end{equation}
is an orthonormal basis of the tangent space $T_{(x,y)}(V)$.\\

\noindent We denote by $\sigma _V\big(d(x,y)\big)$ the geometric
measure on $V$ (i.e. the measure induced by the Riemannian distance
on $V$), which is invariant under the action of the group
$\tilde{\mathcal{U}}$.  The total measure satisfies
\begin{equation}\label{volumenV}
    \sigma _V (V)=\sqrt{2}\sigma_{n-1}\sigma _n,
\end{equation}
where $\sigma _k=2\pi^{(k+1)/2}/\Gamma ((k+1)/2) $ is the
total $k$-th dimensional measure of the unit sphere $S^k$, see for
example~\cite[Lemma~13.5]{AW2}.\\

\noindent For $\alpha >0$ and $S$ a Borel subset of $V$, we denote by $m_{\alpha}(L,S)$ the number of
local minima of the random function $L $ on the set $S$, having
value smaller than $\alpha$. Clearly:

\begin{equation}\label{distmin}
\P(\underline{L}<\alpha)=\P\big(m_{\alpha}(L,V) \geq 1\big)\leq
\E\big(m_{\alpha}(L,V)\big).
\end{equation}
Our aim is to give a useful expression for the right-hand side of
Formula~(\ref{distmin}). For that purpose, let us set for each Borel
subset $S$ of $V$, $\nu (S):=\E\big(m_{\alpha}(L,S)\big)$. Clearly,
$\nu$ is a measure. The invariance of the law of the random field
$\{L(x,y):(x,y)\in V\}$ under the action of
$\tilde{\mathcal{U}}$ implies that $\nu $ is also invariant under $\tilde{\mathcal{U}}$.\\

\noindent Let $\psi:B_{2n-1,\delta}\rightarrow \R^{n+1}\times \R^{n+1} $ be a chart on $V$, that is, a smooth
diffeomorphism between the ball in $\R^{2n-1}$ centered at the origin with radius $\delta >0$
and its image $W=\psi (B_{2n-1,\delta})\subset V$.\\

\noindent We denote by $\tilde{L}:B_{2n-1,\delta}\rightarrow \R$ the
composition $\tilde{L} (w)=L\big(\psi (w)\big)$.\\

\noindent As we already mentioned, our main tool is Rice formula, of which we now present a quick overview:\\

\noindent Let $U$ be an open subset of $\R^n$ and $Z:U\rightarrow \R^n$ a random function having sufficiently smooth paths. Let us denote by $\nu ^Z(S)$ the number of zeros of $Z$ belonging to the Borel subset $S$ of $U$. Under certain general conditions on the probability law of $Z$, one can compute the expectation of $\nu ^Z(S)$ by means of an integral on the set $S$. The integrand is a certain function depending on the underlying probability law.\\

\noindent The simplest form of such a formula is the following:

\begin{equation}\label{rice0}
\E (\nu ^Z(S))=\int_S~\E \big(|\det (Z'(t))|\big/Z(t)=0 \big)\,p_{Z(t)}(0) \,dt
\end{equation}
One must be careful in the choice of the version of the conditional expectation and the density $p_{Z(t)}(\cdot)$ of the random vector $Z(t)$, since they are only defined almost everywhere. But this can be done in a certain number of cases in a canonical form, in such a way that the formula holds true.\\

\noindent This kind of formula can be extended to a variety of situations, such as: a) the zeros of $Z$ can be ``marked'', which means that instead of all zeros, we count only those zeros satisfying certain additional conditions; b) the domain can be a manifold instead of an open subset of Euclidean space; c) one has formulas similar to (\ref{rice0}) for the higher moments of $\nu ^Z(S)$; d) the dimension of the domain can be larger than the one of the image, in which case the natural problem, instead of counting roots, is studying the geometry of the random set $Z^{-1}(\{ 0 \})$. For a detailed account of this subject, including proofs and applications, see \cite[Chapters 3 and 6]{AW2}.\\

\noindent Here we want to express by means of a Rice formula  the expectation
$$
\nu(S)= \E \big(m_{\alpha}(L,S)\big)=\E \big(m_{\alpha}(\tilde{L},\psi ^{-1}(S))\big)
$$
In our case, with probability $1$,  $m_{\alpha}(\tilde{L},\psi ^{-1}(S))$ equals the number of points $w \in \psi ^{-1}(S)$ such that the derivative $\tilde{L}'(w)$ vanishes,  the second derivative $\tilde{L}''(w)$ is positive definite and the value $L(w)$ is bounded by $\alpha.$ Then, under certain conditions, we can write (use   \cite[Formula~(6.19)]{AW2}, mutatis mutandis):

\begin{equation}  \label{riceminalfa}
\aligned
\nu&(S)= \E \big(m_{\alpha}(L,S)\big)=\E \big(m_{\alpha}(\tilde{L},\psi ^{-1}(S))\big)\\
&=\int_0^{\alpha}du  \int_{\psi ^{-1}(S)}\E
\left(\big|\det(\tilde{L}''(w))\big| \chi_{\{\tilde{L}''(w)\succ 0
\}} /\tilde{L}(w)=u, \tilde{L}'(w)=0 \right)
  p_{\tilde{L}(w),\tilde{L}'(w)}(u,0)~dw.
\endaligned
\end{equation}
Here $\chi_{A}$ means indicator function of the set $A$, $\succ$
means positive definite, $p_{\tilde{L}(w),\tilde{L}'(w)}$ is the joint
density in $\R^1  \times \R^{2n-1}$ of the pair of random variables
$\big( \tilde{L}(w),\tilde{L}'(w)\big)$, and $dw$ is Lebesgue measure
on $ \R^{2n-1}$. Note that in the chart image, $d\sigma _V=\big(\det
\big((\psi ' (w))^t\psi '(w)\big)\big)^{1/2}dw.$ \\

\noindent In \cite[Proposition~6.6]{AW2} it is proved that if the
integrand in Formula~(\ref{riceminalfa}) were well-defined then the
 change of variable formula would be satisfied, so that $\nu(S)$
would be the integral of a $(2n-1)$-form. In that case,
Formula~(\ref{riceminalfa}) would already imply that the measure
$\nu $ is finite and absolutely continuous with respect to $\sigma
_V$, so that one could write for each Borel subset $S$ of $V$

$$
\nu (S)=\int _S~g~d\sigma _V
$$
for a continuous function  $g$. Let us prove that in that case the
Radon-Nikodym derivative $g$ would be constant. To see this, notice
that $\sigma _V$ is also invariant under $\tilde{\mathcal{U}}$ and
the action of this group is transitive on $V$. If $g$ takes
different values at two points $(x_1,y_1)$ and $ (x_2,y_2)$ of $V$,
letting $\tilde{U} \in \tilde{\mathcal{U}}$ be such that
$\tilde{U}(x_1,y_1)=(x_2,y_2)$, we  can find a small neighborhood
$S$ of $(x_1,y_1)$ such that
$$
\int _S~g~d\sigma _V~\neq ~ \int _{\tilde{U}(S)}~g~d\sigma _V,
$$
 contradicting the invariance of $\nu$.\\

\noindent We could then compute the constant $g$ by computing it at
the point $(e_0,e_1)$. We choose the chart $\psi$ in such a way that
$\psi (0)=(e_0,e_1)$ and $\big(\psi ' (0)\big)^t\psi '(0)=I_{2n-1}$
and compute

\begin{equation*}
\aligned g&=\lim_{\varepsilon\rightarrow 0}\frac{\nu(\psi
(B_{2n-1,\varepsilon}))}{\sigma _V(\psi
(B_{2n-1,\varepsilon}))}
\\
&=\int_0^{\alpha}  \E \left(\big|\det(\tilde{L}''(0))\big|
\chi_{\{\tilde{L}''(0)\succ 0 \}} /|\tilde{L}(0)=u, \tilde{L}'(0)=0
\right) p_{\tilde{L}(0),\tilde{L}'(0)}(u,0)~du
\endaligned
\end{equation*}

\noindent So, if Formula~(\ref{riceminalfa}) were true, it follows
that we could write
\begin{equation}\label{riceinicial}
\nu(S)= \sigma_V(S)\int_0^{\alpha}  \E
\left(\big|\det(\tilde{L}''(0))\big| \chi_{\{\tilde{L}''(0)\succ 0
\}} /\tilde{L}(0)=u, \tilde{L}'(0)=0 \right)
p_{\tilde{L}(0),\tilde{L}'(0)}(u,0)~du.
\end{equation}
However, if one computes the ingredients in the integrand of the
right-hand side of Formula~(\ref{riceminalfa}), it turns out that
the value of the density is $+\infty$ and the conditional
expectation
vanishes. So, the formula is meaningless in this form.\\

\noindent
To overcome this difficulty we proceed as follows:\\

\noindent Let $S_{(x,y)}=\mbox{span}(z_2,\dots,z_n)\subset \R^{n+1}$
be the orthogonal complement of $\mbox{span}(x,y) \subset  \R
^{n+1}$ and $\pi _{x,y}:\R^{n+1} \to S_{(x,y)}$ be the orthogonal
projection. For $(x,y)\in V$, we introduce a new random vector
$\zeta_{(x,y)}$ defined as
\begin{equation}\label{zetaxv}
   \zeta_{(x,y)}:=\Big( \big( \pi _{x,y} ( f'_i(x)),\partial_{yy}f_i(x) \big) ,
   1\le i\le n \Big) \ \in \ \big(S_{(x,y)}\times \R\big)^n \ \cong \
   \R^{n^2},
\end{equation}
where for $1\le i\le n$, $f'_i(x)$ is the free derivative (the
gradient) of $f_i$ at $x$, the first $(n-1)$ coordinates are given
by the coordinates of the projection of $f'_i(x)$ onto $S_{(x,y)}$
in the orthonormal basis $ \{z_2,\dots,z_n\}$ and the $n$-th one is
the
second derivative in the direction $y$ at $x$.\\

\noindent Then, instead of Formula~(\ref{riceminalfa}) we write the
formula

\begin{equation}\label{riceminalfa1}
\begin{split}
\E\big(m_{\alpha}(L,S)\big)=\int_0^{\alpha}&
 du \int_{\psi^{-1}(S)}dw\int_{(S_{\psi (w)}\times \R)^n}
  \E  \left(\big|\det(\tilde{L}''(w))\big|\cdot \chi_{\{
\tilde{L}''(w)\succ 0 \}} \, /\,
  \tilde{L}(w)=u,\right. \\
 & \left. \tilde{L}'(w)=0,\zeta_{\psi(w)}=z\right)
 \cdot
  p_{\tilde{L}(w),\tilde{L}'(w),\zeta_{\psi(w)}}(u,0,z)~dz.
\end{split}
\end{equation}
Formally, Formula~(\ref{riceminalfa}) is obtained from
Formula~(\ref{riceminalfa1}) by integrating in $z$.\\

\noindent To prove the validity of Formula~(\ref{riceminalfa1}) one
could follow exactly the proof of \cite[Formula~6.18]{AW2}  if the
random field $\{L(x,y):(x,y)\in V \}$ were Gaussian. This is not our
case. However, it is in fact a simple function of a Gaussian field,
namely it is a quadratic form in the coordinates of $f$ and its
first derivatives as shown in Formula~(\ref{randomfield}). It is
then easy to show that Formula~(\ref{riceminalfa1}) remains true  as
it is done for the general Rice formulas in \cite[Ch.~6,
Section~1.4]{AW2}. This requires proving: (a) the existence and
regularity of the density $
p_{\tilde{L}(w),\tilde{L}'(w),\zeta_{(\psi (w))}}(u,0,z)$ and (b)
with probability $1$, $0$ is a regular value
of $\tilde{L}'(w) $.\\

\noindent (a) is contained below in the present proof (see Step 4).
As for (b), once the regularity of this density will be established,
it follows
in the same way as \cite[Proposition~6.5 (a)]{AW2}.\\

\noindent So, using exactly the same arguments leading to
Formula~(\ref{riceinicial}) we get:

\begin{equation*}
\aligned \E\big(m_{\alpha}(L,V)\big)=\sigma _V(V)\int_0^{\alpha}
 du \int_{(S_{\psi (0)}\times \R)^n}
 & \E  \left(\big|\det(\tilde{L}''(0))\big|\cdot \chi_{\{
\tilde{L}''(0)\succ 0 \}} \, /\,
  \tilde{L}(0)=u,\right. \\
 & \left. \tilde{L}'(0)=0,\zeta_{\psi(0)}=z\right)
 \cdot
  p_{\tilde{L}(0),\tilde{L}'(0),\zeta_{\psi(0)}}(u,0,z)~dz.
\endaligned
\end{equation*}
Finally, taking into account Inequality~(\ref{distmin}) we can
conclude that:

\begin{equation}\aligned \label{densbound2}
p_{\underline{L}}(u) \leq \sigma _V(V) \int_{(S_{\psi (0)}\times
\R)^n}
 \E  &\left(\big|\det(\tilde{L}''(0))\big|\cdot \chi_{\{
\tilde{L}''(0)\succ 0 \}} \, /\,
  \tilde{L}(0)=u,\right. \\
 & \ \left. \tilde{L}'(0)=0,\zeta_{\psi(0)}=z\right)
 \cdot
  p_{\tilde{L}(0),\tilde{L}'(0),\zeta_{\psi(0)}}(u,0,z)~dz.
\endaligned
\end{equation}

\medskip

\noindent \textbf{Step 3.} For the rest of the proof  we fix the
following orthonormal basis $\mathcal{B}_T$ (given in
(\ref{baseestan})) of the tangent space $T:=T_{e_0,e_1}$:
\begin{equation}\label{basee0e1}
    \mathcal{B}_T=\Big((e_2,0),\ldots,(e_n,0),(0,e_2),\ldots,(0,e_n),
    \frac{1}{\sqrt{2}}(e_1,-e_0)\Big).
\end{equation}
Let us recall that in the right-hand side of
Inequality~(\ref{densbound2}) the values of $\tilde{L}(0),
\tilde{L}'(0), \tilde{L}''(0)$
 are computed using a chart $\psi$ of a neighborhood of $(e_0,e_1)$ such that
 $\psi (0)=(e_0,e_1)$ and the image by $\psi ' $ of the canonical basis of $\R^{2n-1}$ is
  an orthonormal basis of the tangent space $T$, that we set to be
 $\mathcal{B}_T $.\\

\noindent We introduce, for $(x,y)\in V$, the gradient $\nabla
\tilde{L}(x,y)$ which is the orthogonal projection of the free
derivative $L'(x,y)$ onto the tangent space $T_{(x,y)}$ and is
obviously independent of the parametrizations of the manifold $V$.
One can check by means of a direct computation that
$$
\nabla \tilde{L}(e_0,e_1)=\tilde{L}'(0)\big(\psi'(0)\big)^t.
$$
Then, using the change of variables formula for densities and the
fact that $\big(\psi'(0)\big)^t\psi'(0)=I_{2n-1}$, we have:
$$
p_{\tilde{L}(0),\tilde{L}'(0),\zeta_{\psi(0)}}(u,0,z)=p_{L(e_0,e_1),\nabla \tilde{L}(e_0,e_1),\zeta_{(e_0,e_1)}}(u,0,z).
$$

\noindent{\bf{Notation. }}
To simplify notation, from now on we write $f_i$ (resp.
$\partial_kf_i$ and $\partial_{k\ell}f_i$, $0\le k,\ell\le n$) for
$f_i(e_0) $ (resp. $\partial_kf_i(e_0)=\frac{\partial f_i}{\partial
x_k}(e_0)$, $\partial_{k\ell }f_i(e_0)=\frac{\partial^2
f_i}{\partial x_k\partial x_\ell}(e_0)$, $0\le k,\ell\le n$). In the
same spirit we write $L$ for $L(e_0,e_1)=\tilde L(0)$, $\nabla
\tilde{L}$ for $\nabla \tilde{L} (e_0,e_1)$ and  $L''$ for
$L''(e_0,e_1)$.  Finally we write $\zeta$ for
 $\zeta(e_0,e_1)$ and $S$ for $S_{(e_0,e_1)}$.\\

\noindent Under this notation, Inequality~(\ref{densbound2}) becomes:
\begin{equation} \label{densbound3}
p_{\underline{L}}(u) \leq \sigma _V(V) \int_{(S\times \R)^n}
 \E  \left(\big|\det(\tilde{L}'')\big|\cdot \chi_{\{
\tilde{L}''\succ 0 \}}\Big/ {L}=u, \nabla \tilde{L}=0,\zeta=z\right)
p_{{L},\nabla \tilde{L},\zeta}(u,0,z)~dz.
\end{equation}
According to the definition of $L(x,y)$ in~(\ref{randomfield}) we
have
\begin{equation}\label{ele}
   L=\sum_{i=1}^n\frac{1}{d_i}(\partial_1f_i)^2+\sum_{i=1}^nf_i^2,
\end{equation}
and, from Definition~(\ref{zetaxv}),
\begin{equation}\label{zetaeceroe1}
  \zeta:= \zeta_{e_0,e_1}=\big((\partial_2f_i,\dots,\partial_nf_i,
     \partial_{11}f_i ) , 1\le i\le n\big)\in\R^{n^2}.
\end{equation}
We also set
$[\nabla \tilde{L}]_{\mathcal{B}_T}:=(\xi_2,\ldots,\xi_n,\eta_2,\ldots,
\eta_n,\varrho)$ for the coordinates of  the gradient
$\nabla \tilde{L}$ in the basis $\mathcal{B}_T$.\\

\noindent Using that the (free) partial derivatives of $L$ at
$(e_0,e_1)$   are given  by
\begin{equation*}\label{derprimlibre}\aligned
 &\frac{\partial L}{\partial x_{k}}(e_0,e_1)=
  \sum_{i=1}^n\frac{2}{d_i} (\partial_{k1}f_i)(\partial _1f_i)
    +\sum_{i=1}^n2f_i(\partial _{k}f_i) \quad
     \mbox{for\ } \ 0\le k\le n\\
 &\frac{\partial L}{\partial y_\ell}(e_0,e_1)
  =\sum_{i=1}^n\frac{2}{d_i}(\partial _1f_i)(\partial_\ell f_i)
     \quad \mbox{for\ } \ 0\le \ell\le n,
\endaligned
\end{equation*}
we obtain
\begin{equation}\aligned \label{derVcoord}
   \xi _{j}&=\langle L'(e_0,e_1),(e_{j},0)\rangle
   =2\sum_{i=1}^n\frac{1}{d_i}(\partial _{1j}f_i)(\partial_1f_i)
    +2\sum_{i=1}^nf_i(\partial_{j}f_i), \ \  2\le j\le n,\\
    \eta_{j}&=\langle L'(e_0,e_1),(0,e_{j})\rangle
  =2\sum_{i=1}^n\frac {1}{d_i}(\partial_1 f_i)(\partial_{j}f_i), \
     2\le j\le n,\\
    \varrho&=\langle L'(e_0,e_1),2^{-1/2}(e_1,-e_0)\rangle\\
  &=\sqrt{2}\Big[\sum_{i=1}^n\frac{1}{d_i}(\partial_1f_i)
    (\partial_{11}f_i)+\sum_{i=1}^nf_i(\partial_{1}f_i)\Big]
   -\sqrt{2}\sum_{i=1}^n\frac{1}{d_i}(\partial_{0}f_i)
    (\partial_{1}f_i)\\
  &=\sqrt{2}\sum_{i=1}^n\frac{1}{d_i}(\partial_1f_i)
        (\partial_{11}f_i).
\endaligned
\end{equation}
Here, $\langle\ ,\ \rangle$ denotes the usual inner product in
$\R^{n+1}\times \R^{n+1}$ and the last equality in (\ref{derVcoord})
follows from the equalities $\partial_0f_i=d_if_i$ for $1\leq i\leq n$
which are easily verified.

\bigskip

\noindent\textbf{Step 4}. In this step we focus on the term
$p_{L,\nabla\tilde{L},\zeta}(u,0,z)$ of \eqref{densbound3}. To this aim
we factor this density as
\begin{eqnarray}\label{factdensidad}
   p_{L ,\nabla \tilde{L} ,\zeta}(u,0,z)& = &
   q_{L,\nabla \tilde{L}/\zeta=z}(u,0)\,\cdot\,p_{\zeta}(z)
\end{eqnarray}
where $q_{L,\nabla \tilde{L}/\zeta=z}(u,0)$ denotes conditional density. \\

\noindent To study the two terms in the right-hand side
of~(\ref{factdensidad}), we need a  lemma containing the ingredients to compute the
distributions and conditional expectations appearing in our proof.

 \begin{lemma}\label{lemcovar}
Let $f\in \R[X_0,\dots,X_n]$ be a homogeneous random polynomial of
degree $d$. Assume that $f$ follows the Shub-Smale model for the
probability law of the coefficients, i.e. the coefficients of the
polynomial $f=\sum_{|j|=d} a_j X^j$ are independent, Gaussian,
centered random variables  with variances
\[
\Var(a_j)=
       {  d \choose
              j }.
\]
Then
\begin{itemize}
\item For $x,y\in \R^{n+1}$, the covariances satisfy
$$
   \E \left(f(x)f(y) \right)=\langle x,y \rangle ^{d}~~~
   \forall \,x,y\in\R^{n+1},
$$
where $\langle \ ,\ \rangle$ is the usual inner product in
$\R^{n+1}$.
\end{itemize}
Moreover, if $e_0:=(1,0,\dots,0)$ is the first vector of the
canonical basis of $\R^{n+1}$ and we write  $f$ (resp. $\partial_kf$
and $\partial_{k\ell}f$, $0\le k,\ell\le n$) for $f(e_0) $ (resp.
$\partial_kf(e_0)=\frac{\partial f}{\partial x_k}(e_0)$,
$\partial_{k\ell }f(e_0)=\frac{\partial^2 f}{\partial x_k\partial
x_\ell}(e_0)$, $0\le k,\ell\le n$), we get the following
covariances:
\begin{itemize}
\item $\E \left( f \partial_kf \right) =\delta_{k0}d$ \ for \  $0\le k\le n$.
\item $\E \left( (\partial_kf)(\partial_{k'}f)\right) =\delta_{kk'}[d+\delta_{k0}d(d-1)]$  \ for \  $0\le k,k'\le n$.
\item $\E \left( f(\partial_{k\ell}f)\right) =\delta_{k\ell}\delta_{k0}d(d-1)$  \ for \  $0\le k,\ell\le n$.
\item $\E \left( (\partial_{k\ell}f)(\partial_{k'}f)\right) =d(d-1)\big[
(d-2)\delta_{\ell0}\delta_{k0}\delta_{k'0}+\delta_{k0}\delta_{k'\ell}+\delta_{\ell0}\delta_{kk'}\big]$
 \ for \  $0\le k,k',\ell\le n$.
\item $\E \left( (\partial_{k\ell}f)(\partial_{k'\ell'}f)\right)
=d(d-1)\Big\{
(d-2)(d-3)\delta_{k0}\delta_{\ell0}\delta_{k'0}\delta_{\ell'0}+(d-2)
\big[\delta_{k0}\delta_{k'0}\delta_{\ell\ell'}+\delta_{k'0}\delta_{\ell0}\delta_{k\ell'}+
\delta_{k0}\delta_{\ell'0}\delta_{k'\ell}+\delta_{\ell0}\delta_{\ell'0}\delta_{kk'}\big]+
\delta_{kk'}\delta_{\ell\ell'}+\delta_{k\ell'}\delta_{k'\ell}\Big\}$ \ for \ $0\le k,k',\ell,\ell'\le n.\qquad \diamondsuit$
\end{itemize}
\end{lemma}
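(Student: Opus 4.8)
The plan is to compute all the requested covariances by exploiting the key identity $\E(f(x)f(y)) = \langle x,y\rangle^d$ and then differentiating both sides as functions of $x$ and $y$. This works because differentiation and expectation commute for these polynomial random fields (everything is a finite linear combination of jointly Gaussian coefficients, so all moments are finite and the interchange is routine). Concretely, I would write $C(x,y) := \E(f(x)f(y)) = \langle x,y\rangle^d$ and observe that
$$
\E\big(\partial_k f(x)\,f(y)\big) = \frac{\partial}{\partial x_k}C(x,y),\qquad
\E\big(\partial_k f(x)\,\partial_\ell f(y)\big) = \frac{\partial^2}{\partial x_k\,\partial y_\ell}C(x,y),
$$
and similarly for the second-order derivatives, where each extra derivative in $\partial_{k\ell}f(x)$ contributes an extra $\partial^2/\partial x_k\partial x_\ell$ acting on $C$. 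So the entire lemma reduces to computing partial derivatives of $\langle x,y\rangle^d$ at the point $x=y=e_0$.

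The main computation, then, is the following. Since $\langle x,y\rangle = \sum_{m=0}^n x_m y_m$, we have $\partial C/\partial x_k = d\,\langle x,y\rangle^{d-1} y_k$ and, iterating, a general mixed partial of $\langle x,y\rangle^d$ of total order $p$ in the $x$-variables and $q$ in the $y$-variables is a sum of terms of the form (falling factorial in $d$) times a power of $\langle x,y\rangle$ times a product of the surviving $y$'s and $x$'s and Kronecker deltas coming from the cross terms $\partial^2\langle x,y\rangle/\partial x_k\partial y_\ell = \delta_{k\ell}$. At $x=y=e_0$ one has $\langle x,y\rangle = 1$, $x_m=y_m=\delta_{m0}$, so every such term collapses to a product of Kronecker deltas with a numerical coefficient that is a falling factorial of $d$. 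For instance $\E(f\,\partial_k f) = \partial C/\partial x_k|_{e_0,e_0} = d\,y_k|_{e_0} = d\,\delta_{k0}$; for $\E((\partial_{k\ell}f)(\partial_{k'}f))$ one differentiates $C$ twice in $x_k,x_\ell$ and once in $y_{k'}$, getting a sum of three groups of terms according to whether the $y_{k'}$-derivative lands on the surviving $y$ from the $x_k$-derivative, the one from the $x_\ell$-derivative, or pairs up with one of those $x$'s via a $\delta$, which is exactly the three-term bracket in the statement. I would organize this as: (i) derive the general differentiation rule, (ii) evaluate the five cases one at a time, each being a short bookkeeping exercise.

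There is essentially no deep obstacle here; the lemma is a purely computational statement and the only real work is to keep the combinatorics of which index-pairs get contracted under control, especially in the last, fifth identity $\E((\partial_{k\ell}f)(\partial_{k'\ell'}f))$, which is a fourth-order derivative of $\langle x,y\rangle^d$ and hence produces the largest number of terms. I expect that to be the one place where care is needed: one must account for all ways the two $y$-derivatives can act — on the two surviving $y$'s from the $x$-derivatives, or contracting with the surviving $x$'s — yielding the $(d-2)(d-3)$-term, the four $(d-2)$-terms, and the two final $\delta\delta$-terms. A clean way to do this is to note that $\partial^4 \langle x,y\rangle^d / \partial x_k\partial x_\ell\partial y_{k'}\partial y_{\ell'}$, evaluated at $e_0$, only sees which of $k,\ell,k',\ell'$ equal $0$ and which coincide with each other; so one can simply enumerate the contraction patterns. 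I would present the general rule once and then tabulate, rather than writing out each derivative symbolically in full. The remaining covariance identities (the first four bullets) follow the same scheme with at most three derivatives and are immediate.
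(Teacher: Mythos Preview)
Your approach is correct and is exactly the one the paper takes: establish $\E(f(x)f(y))=\langle x,y\rangle^d$ (which the paper does in one line via the multinomial expansion and independence of the $a_j$), then differentiate the covariance kernel in the $x$- and $y$-variables under the expectation and specialize at $x=y=e_0$. The only thing to add is that the first identity $\E(f(x)f(y))=\langle x,y\rangle^d$ is itself part of the statement, so you should include its one-line proof rather than merely ``exploit'' it.
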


\medskip
\noindent We proceed with the  study of the two terms in the right-hand side
of~(\ref{factdensidad}).\\

\noindent {\em Computation of  $p_{\zeta }(z)$}:
By Lemma~\ref{lemcovar}, the $n^2$
coordinates of $\zeta $ in \eqref{zetaeceroe1} are independent Gaussian centered random
variables satisfying that
$\Var(\partial _{k}f_i)=d_i$
and $\Var(\partial _{11}f_i)=2d_i(d_{i}-1)$   for $1\le i\le n$ and $2\le k\le n$.\\
Although we are not going to use the exact expression in the sequel,
we can immediately deduce for $z=\left((z_{i2},\dots
,z_{in},z_{i11}), 1\le i\le n \right)$ that
\[
   p_{\zeta }(z)\,=\,
  \frac{1}{(2\pi)^{n^2/2}}\frac{1}{\prod_{i=1}^nd_i^{(n-1)/2}
  \prod_{i=1}^n(2d_i(d_i-1))^{1/2}}\exp \left(
  -\frac{1}{2}\sum_{i=1}^n\left( \sum_{j=2}^n
  \frac{z_{ij}^2}{d_i}+\frac{z_{i11}^2}{2d_i(d_i-1)}\right)\right).
\]

\medskip \noindent
 {\em Computation of
$q_{L,\nabla \tilde{L}/\zeta=z}(0)$}: We factor it  as follows:
$$
   q_{L,\nabla \tilde{L}/\zeta=z}(u,0)= q_{L /\nabla \tilde{L} =0,\zeta =z}(u)
  \cdot q_{\nabla \tilde{L} /\zeta=z}(0).
$$
Remembering that $\big(\nabla
\tilde{L}\big)_{\mathcal{B}_T}:=(\xi_2,\dots,\xi_n,\eta_2,\dots,
\eta_n,\varrho)$, we can write $q_{\nabla \tilde{L}/\zeta=z}(0)$ as
$$
  q_{\nabla \tilde{L}/\zeta=z}(0)=q_{(\xi_2,\ldots,\xi_n)/
    (\eta_2,\ldots,\eta _n, \varrho)=0,~\zeta=z}(0)\cdot
  q_{(\eta_2,\ldots,\eta _n, \varrho)/\zeta=z}(0).
$$

\noindent First we  compute $q_{(\eta _2,\ldots,\eta
_n,\varrho)/\zeta =z}(0)$. The condition $\zeta =z$ says that for
$1\le i\le n$ and $2\le j\le n$, $\partial_j f_i=z_{ij}$ and
$\partial_{11}f_i=z_{i11}$. Therefore, from Identities
(\ref{derVcoord}), we have
\begin{equation}\label{Az}
   \left(\begin{array}{c}\eta_2\\ \vdots\\
   \eta_n\\ \varrho\end{array}\right)
  = A(z)\ \left(\begin{array}{c}\frac{\partial_1f_1}{\sqrt{d_1}}\\
     \vdots\\ \frac{\partial_1f_n}{\sqrt{d_n}}\end{array}\right),\quad
    \mbox{where} \quad
    A(z)=\stackrel{\longleftarrow \quad \bajo n \quad \longrightarrow}{ \left( \begin{array}{ccc}
    \frac{2}{\sqrt{d_1}}z_{12} & \dots &\frac{2}{\sqrt{d_n}}z_{n2}\\
    \vdots& & \vdots \\
    \frac{2}{\sqrt{d_1}} z_{1n} & \dots
&\frac{2}{\sqrt{d_n}} z_{nn} \\[4pt]
\hline
\alto\frac{\sqrt{2}}{\sqrt{d_1}}z_{111}& \dots &
  \frac{\sqrt{2}}{\sqrt{d_n}}z_{n11}
\end{array}\right) }
\begin{array}{c}
\scriptstyle{\uparrow}\\
\scriptstyle{n-1}\\\scriptstyle{\downarrow}\\
[6pt]\\
\scriptstyle{1}
\end{array}
\end{equation}
is non-singular for almost every $z\in\R^{n^2}$.
Applying again Lemma~\ref{lemcovar}, $\partial_1f_i/\sqrt{d_i}$,
$1\le i\le n$,  are independent standard normal random variables
that are independent from $\zeta$. By the change of variables
formula, we get
$$
  q_{(\eta _2,\ldots,\eta _n,
 \varrho)/\zeta=z}(0)=\frac{1}{(2\pi)^{n/2}}\cdot \frac{1}{|\det A(z)|}.
$$

\noindent Now we compute
$q_{(\xi_2,\ldots,\xi_n)/(\eta_2,\ldots,\eta _n,
\varrho)=0,\zeta=z}(0)$. Since $A(z)$ is non-singular for almost
every $z$, the condition $\eta _2=\ldots=\eta _n = \varrho=0 $
implies $\partial _1f_i=0$ for $1\le i\le n$. Therefore, from
Identities (\ref{derVcoord}) and since $\zeta=z $, we have
$$
  \left(\begin{array}{c}\xi _{2}\\ \vdots \\ \xi_n\end{array}\right)
  =2\, B(z) \,
  \left(\begin{array}{c}f_1\\ \vdots \\f_n\end{array}\right),
  \ \  \mbox{where } \ \
  B(z)= \stackrel{\longleftarrow \quad \bajo n \quad \longrightarrow }{\left(
     \begin{array}{ccc}
    z_{12} & \dots &z_{n2}\\
 \vdots& & \vdots\\
   z_{1n} & \dots & z_{nn}
\end{array}\right)}{\begin{array}{c}\uparrow \\ \scriptstyle n-1 \\ \downarrow\end{array} }.
$$
Again,  $f_1,\dots,f_n$ are independent
standard normal variables independent from
$\left( \eta_2,\ldots,\eta _n, \varrho,\zeta \right)$ and thus
$$
  q_{(\xi_2,\ldots,\xi_n)/(\eta _2,\ldots,\eta _n, \varrho)=0,~\zeta
  =z}(0)=\frac{1}{(2\pi)^{(n-1)/2}}\cdot \frac{1}{2^{n-1} (\det
  (B(z)B(z)^t))^{1/2}},
$$
where $B(z)^t$ denotes the transpose of the matrix $B(z)$.\\
We therefore obtain
\begin{eqnarray*}
  q_{\nabla \tilde{L}/\zeta
 =z}(0) & =&q_{(\xi_2,\ldots,\xi_n)/(\eta _2,\ldots,\eta _n,
 \varrho)=0,~\zeta=z}(0)\cdot q_{(\eta _2,\ldots,\eta _n,
 \varrho)/\zeta=z}(0)\\
 &=&\frac{1}{(2\pi )^{n-\frac{1}{2}}2^{n-1}|\det A(z)|(\det
 (B(z)B(z)^t))^{1/2}}.
\end{eqnarray*}
\medskip

\noindent Finally we compute  $q_{L /\nabla \tilde{L} =0,\zeta=z}(u)$. The
conditions $\nabla \tilde{L} =0$ and $\zeta =z$ imply by \eqref{Az} and \eqref{derVcoord}
 that  $\partial _1f_i=0$ for $1\le i\le n$   and $\sum_{i=1}^nf_iz_{ij }=0$ for $2\le j\le n$ for almost every $z$
 . Plugging the former into
\eqref{ele} we get
$$
L= \sum_{i=1}^nf_i^2,
$$
and the latter says that
 the vector $(f_1,\ldots,f_n)$  is orthogonal to the
$(n-1)$-dimensional subspace $S$  spanned by the $n-1$ vectors
$(z_{1j},\dots,z_{nj}),\ 2\le j\le n$. This shows that $f_1^2+\cdots +f_n^2$, the square of the distance of $(f_1,\dots,f_n)$ to $S$,
has the
$\chi^2_1$-distribution, since
the property of being a vector of independent standard normal
variables is independent of the choice of the orthonormal basis.
 So, for $u>0$,
$$
  q_{L /\nabla \tilde{L} =0,~\zeta  =z}(u)=\frac{e^{-u/2}}{\sqrt{2\pi u }}.
$$
We therefore obtain
\begin{equation*}\label{denscondparcial}\aligned
  q_{L,\nabla \tilde{L}/\zeta=z}(u,0)&= \ q_{L /\nabla \tilde{L} =0,\zeta=z}(u)
  \cdot q_{\nabla \tilde{L} /\zeta=z}(0)\\
  &=\ \frac{e^{-u/2}}{(2\pi)^{n}2^{n-1}|\det\big( A(z)\big)|(\det
    \big(B(z)B(z)^t)\big)^{1/2}\,\sqrt{u}}.
\endaligned
\end{equation*}
Plugging this expression into Identity~(\ref{factdensidad}) we
obtain
\begin{eqnarray}\label{factdensidad2}
   p_{L ,\nabla \tilde{L} ,\zeta}(u,0,z)& = &
   \frac{e^{-u/2}}{(2\pi)^{n}2^{n-1}|\det \big(A(z)\big)|(\det
    \big(B(z)B(z)^t)\big)^{1/2}\,\sqrt{u}}\,\cdot\,p_{\zeta}(z).
\end{eqnarray}

\bigskip

\noindent\textbf{Step 5}.
In this step we focus on  the conditional expectation
\begin{equation}\label{Lsegunda}
   \E\left(\big|\det(\tilde{L}'')\big| \cdot
   \chi_{\{\tilde{L}''\succ 0\}} \Big/ L=u,\nabla \tilde{L}=0,\zeta=z \right)
\end{equation}
in the integrand of (\ref{densbound3}). We obtain the following expression for $\tilde{L}''$ under the stated conditions.\\

\begin{lemma}\label{lemmaM}
Let $M$ be the symmetric block-matrix $ \R^{(2n-1)\times
(2n-1)}$ of the linear operator $\tilde{L}''$,  under the conditions
$L =u, \nabla \tilde L =0 $ and $\xi =z$.  Let $f^*$ be any solution of the system $\sum_{i=1}^n f_iz_{ij}=0$, $2\le j\le n$,
and $\sum_{i=1}^n f_i^2=u$. Then
$$
    M=\stackrel{\bajo n-1\qquad n-1\qquad 1\ }{\left(
    \begin{array}{c|c|c}
    M_{\sigma \sigma}
   & M_{\sigma \tau}&M_{\sigma \theta }\\[2pt]
   \hline
  \alto M_{\tau \sigma}&M_{\tau \tau} & M_{\tau \theta}\\[2pt]
  \hline
  \alto M_{\theta \sigma} & M_{\theta \tau} & M_{\theta \theta}
  \end{array}\right) }
  \begin{array}{l}
  \alto{\scriptstyle n-1} \\
  \alto{\scriptstyle n-1} \\
  \alto{\scriptstyle 1}
  \end{array}
$$
where
\begin{equation*}\aligned
  & (M_{\sigma\sigma})_{jj} \ =  \ 2\sum_{i=1}^n
  \left(\frac{1}{d_i}(\partial_{1j}f_i)^2+z_{ij }^2+f^*_i
  (\partial_{jj}f_i)-d_i{f^*_i}^2\right) \quad
  \mbox{for} \  2\le j\le n,\\
  & (M_{\sigma\sigma})_{jk} \  = \ 2\sum_{i=1}^n
  \left(\frac{1}{d_i}(\partial_{1j}f_i)(\partial_{1k}f_i)
  +z_{ij }z_{ik}+f^*_i(\partial_{jk}f_i) \right) \quad
  \mbox{for} \ 2\le j\ne k \le n,\\
  & (M_{\sigma\tau})_{jk} \ = \ 2\sum_{i=1}^n
  \frac{1}{d_i}(\partial_{1j}f_i)z_{ik } \quad
  \mbox{for} \  2\le j,k\le n ,\\
 & (M_{\sigma\theta})_{j1} \ = \ \sqrt{2}\sum_{i=1}^n
   \frac{1}{d_i}(\partial_{1j}f_i) z_{i11} \quad
   \mbox{for} \  2\le j\le n,\\
  & (M_{\tau\tau})_{jk} \ = \ 2\sum_{i=1}^n
  \frac{1}{d_i}z_{ij}z_{ik } \quad \mbox{for} \  2\le j, k\le n,\\
  & (M_{\tau\theta})_{j1} \ = \ \sqrt{2}\sum_{i=1}^n
  \frac{1}{d_i}z_{i11}z_{ij }  \quad \mbox{for} \  2\le j\le n,\\
  & M_{\theta\theta}\ = \  \sum_{i=1}^n
  \left(\frac{1}{d_i}z_{i11}^2-f^*_iz_{i11}\right). \endaligned
\end{equation*}
\end{lemma}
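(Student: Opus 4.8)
The statement of Lemma~\ref{lemmaM} is a computation: one must exhibit the Hessian matrix of $\tilde L$ at the base point, expressed in the orthonormal basis $\mathcal{B}_T$ of~(\ref{basee0e1}), after imposing the conditioning $L=u$, $\nabla\tilde L=0$, $\zeta=z$. So the proof is essentially bookkeeping, organized around the three groups of tangent directions: the $n-1$ directions $(e_j,0)$ (indexed $\sigma$), the $n-1$ directions $(0,e_j)$ (indexed $\tau$), and the single direction $2^{-1/2}(e_1,-e_0)$ (indexed $\theta$). First I would recall that $\tilde L''$ at $0$, computed in a chart $\psi$ with $\psi(0)=(e_0,e_1)$ and $(\psi'(0))^t\psi'(0)=I_{2n-1}$, coincides with the \emph{intrinsic} Hessian of $\tilde L$ on $V$ at $(e_0,e_1)$ \emph{when restricted to the set where $\nabla\tilde L=0$}: indeed, at a critical point the Hessian is chart-independent, and by~(\ref{baseestan})--(\ref{basee0e1}) the chosen basis is orthonormal, so $M$ is just the matrix of second covariant derivatives of $L$ along $V$ in the directions of $\mathcal{B}_T$. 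Thus the task reduces to differentiating $L(x,y)$ twice along curves in $V$ through $(e_0,e_1)$ tangent to the chosen basis vectors, and then simplifying using the conditioning.

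\smallskip
\noindent\textbf{Key steps.} (1) Write $L(x,y)=\sum_i d_i^{-1}\langle f_i'(x),y\rangle^2+\sum_i f_i(x)^2$ from~(\ref{randomfield}), and compute all needed second free partial derivatives $\partial^2 L/\partial x_k\partial x_\ell$, $\partial^2 L/\partial x_k\partial y_\ell$, $\partial^2 L/\partial y_k\partial y_\ell$ at $(e_0,e_1)$ in terms of $f_i,\partial_k f_i,\partial_{k\ell}f_i$ (all evaluated at $e_0$), exactly as the first free derivatives were computed just before~(\ref{derVcoord}). (2) Restrict to $V$: since $\mathcal{B}_T$ lies in $T_{(e_0,e_1)}V$, for the $\sigma$- and $\tau$-directions (which are $(e_j,0)$ and $(0,e_j)$ with $j\ge 2$, hence already "flat" directions in which one can move linearly while staying to first order on the sphere), one must add the second-fundamental-form correction coming from $\nabla\tilde L$ contracted with the curvature of $V$; but because we are conditioning on $\nabla\tilde L=0$, \emph{all these correction terms vanish}, so $M$ equals the free Hessian block evaluated on $\mathcal{B}_T$. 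The only genuinely curved direction is $\theta=2^{-1/2}(e_1,-e_0)$; here one must be a little more careful, parametrizing e.g. $x(t)=\cos t\, e_0+\sin t\, e_1$, $y(t)=-\sin t\, e_0+\cos t\, e_1$ and differentiating twice, but again the first-order (acceleration) term is killed by $\nabla\tilde L=0$. (3) Impose $\zeta=z$, i.e. $\partial_j f_i=z_{ij}$ for $j\ge2$ and $\partial_{11}f_i=z_{i11}$; impose $\nabla\tilde L=0$, which as already derived in Step~4 forces $\partial_1 f_i=0$ for all $i$ and $\sum_i f_i z_{ij}=0$ for $j\ge 2$; and write $f_i=f_i^*$ for the stated solution. (4) Use the relations $\partial_0 f_i=d_i f_i$ and (by Euler applied to $\partial_k f_i$, which is homogeneous of degree $d_i-1$) $\partial_{0k}f_i=(d_i-1)\partial_k f_i$, to eliminate every occurrence of $\partial_0 f_i$ and $\partial_{0k}f_i$; this is what produces the clean terms $-d_i (f_i^*)^2$ in $(M_{\sigma\sigma})_{jj}$ and $-f_i^* z_{i11}$ in $M_{\theta\theta}$. (5) Collect the surviving terms into the nine blocks and match them against the displayed formulas.

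\smallskip
\noindent\textbf{Main obstacle.} The one place that needs real care, rather than routine expansion, is the $\theta$ direction and its cross-blocks $M_{\sigma\theta}$, $M_{\tau\theta}$, $M_{\theta\theta}$: the tangent vector $2^{-1/2}(e_1,-e_0)$ simultaneously rotates $x$ and $y$, so the second derivative of $L$ along the corresponding geodesic in $V$ mixes the $x$-Hessian, the $xy$-Hessian, the $y$-Hessian, and first-derivative terms via the curvature of the Stiefel manifold; keeping track of the $\sqrt2$ normalizations and of which first-derivative terms survive (they don't, once $\partial_1 f_i=0$ and $\partial_0 f_i=d_i f_i$ are used, but one must verify the cancellation) is the delicate bookkeeping. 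Everything else — the $\sigma\sigma$, $\sigma\tau$, $\tau\tau$ blocks — follows by straightforward differentiation of~(\ref{randomfield}) in the coordinate directions $e_j$, $j\ge 2$, where $V$ is locally linear to second order, so there the intrinsic Hessian agrees with the free one and no curvature correction is needed beyond the vanishing ones. I would therefore spend most of the write-up on step~(2)--(4) for the $\theta$-block and merely sketch the others.

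\eproof
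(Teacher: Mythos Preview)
Your overall plan is the right one and matches the paper's approach (which simply invokes Corollary~\ref{matrizM} and then imposes the conditioning together with $\partial_0 f_i=d_i f_i$). However, step~(2) contains a genuine error that would make the computation come out wrong.

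You write that the curvature correction is ``$\nabla\tilde L$ contracted with the curvature of $V$'' and that therefore ``because we are conditioning on $\nabla\tilde L=0$, all these correction terms vanish.'' This is not correct: the chain rule gives
\[
(M)_{pq}=\big\langle L''\,\psi'_p(0),\,\psi'_q(0)\big\rangle+\big\langle L',\,\psi''_{pq}(0)\big\rangle,
\]
and the vectors $\psi''_{pq}(0)$ computed in Lemma~\ref{curvaturas} lie in the \emph{normal} space $N_{(e_0,e_1)}(V)=\mathrm{span}\{(e_0,0),(0,e_1),2^{-1/2}(e_1,e_0)\}$. So the correction pairs the \emph{free} gradient $L'$ against normal vectors, not the tangential gradient $\nabla\tilde L$. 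Conditioning on $\nabla\tilde L=0$ says nothing about these normal components. Concretely, $\psi''_{\sigma_j\sigma_j}(0)=(-e_0,0)$, so the correction on the diagonal of $M_{\sigma\sigma}$ is $-\partial L/\partial x_0$, and after imposing $\partial_1 f_i=0$ and $\partial_0 f_i=d_i f_i$ one gets
\[
-\frac{\partial L}{\partial x_0}(e_0,e_1)=-2\sum_{i=1}^n d_i\,(f_i^*)^2,
\]
which is exactly the term $-d_i(f_i^*)^2$ in $(M_{\sigma\sigma})_{jj}$. This term does \emph{not} come from the free Hessian $\partial^2 L/\partial x_j^2$ (which, under the conditioning, equals $2\sum_i[\tfrac{1}{d_i}(\partial_{1j}f_i)^2+z_{ij}^2+f_i^*\partial_{jj}f_i]$ with no $-d_i(f_i^*)^2$), contrary to what you suggest in step~(4). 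The same issue reappears in $M_{\theta\theta}$: there the correction $-\tfrac12(\partial L/\partial x_0+\partial L/\partial y_1)$ contributes $-\sum_i d_i(f_i^*)^2$, which is needed to cancel the $+\sum_i d_i(f_i^*)^2$ coming from $\tfrac12\,\partial^2 L/\partial y_0^2$ and leave only $\sum_i(\tfrac{1}{d_i}z_{i11}^2-f_i^* z_{i11})$.

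The fix is simple: keep the correction terms $\langle L',\psi''_{pq}(0)\rangle$ exactly as in Corollary~\ref{matrizM}, evaluate the relevant normal components of $L'$ (only $\partial L/\partial x_0$ survives the conditioning; $\partial L/\partial y_1$, $\partial L/\partial x_1$, $\partial L/\partial y_0$ all vanish once $\partial_1 f_i=0$), and then proceed with your steps~(3)--(5). With that one change, your outline goes through and coincides with the paper's proof.
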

\begin{proof}
The hypotheses imply that for
almost every $z$, one has $\partial_1f_i=0$ for $1\le i\le n$,
$\sum_{i=1}^n f_iz_{ij}=0$ for $2\le j\le n$
and $\sum_{i=1}^n f_i^2=u$. The last two conditions
give a system of $n$ equations and
$n$ unknowns  with  exactly two solutions
$f^*=(f^*_1,\ldots,f^*_n)$ and $-f^*$ for almost every $z$ and
$u>0$.
Moreover the symmetry of the
Gaussian distribution implies that the law of the coordinates of the matrix
$M$ does not change under the stated
conditions when replacing $f_1,\ldots,f_n$ by either one of these solutions.
The formulas are then a consequence of Corollary \ref{matrizM} of Section
\ref{auxlemmas} (here we use   that $\partial_0 f_i =d_if_i$ and skip the details).
\end{proof}

\eproof

\bigskip
\noindent
For $z$ fixed, the only random variables that appear in the elements of $M$
are the second partial derivatives $\partial_{jk}f_i$, $2\le j, k\le n$ and $\partial_{1j}f_i$, $2\le j\le n, 1\le i\le n$.
Therefore, we are in condition to   apply the following result which  gets rid of conditioning in \eqref{Lsegunda}.

\begin{lemma}\label{condincond}
Let $X=(X_{ij})_{1\le i\le p,1\le j\le q}$ be a real random matrix and $Y=(Y_1,...,Y_q)^t,~Z=(Z_1,...,Z_p)^t$ be real random vectors.
Assume that $X,~Y,~Z$ are independent, the distributions of $X,$ $Y$ and $Z$ have bounded continuous densities, respectively in $\R^{p\times q},~\R^{q},~\R^{p}$ and that $p_Y(.)$ and $p_Z(.)$ do not vanish.
Let $g:\R^{p\times q}\rightarrow \R$ be continuous, such that $\E ( |g(X)|)<+\infty.$

\noindent
Then, for any $u\in \R^p$,

$$
\E \big( g(X)\, /\, XY+Z=u, Y=0\big)=\E \big( g(X)\big).\qquad \diamondsuit
$$

\end{lemma}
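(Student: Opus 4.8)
The plan is to reduce the conditional expectation to an unconditional one by exploiting the fact that on the event $\{Y=0\}$ the random constraint $XY+Z=u$ collapses to $Z=u$, which involves only the vector $Z$, and $Z$ is independent of $X$. The slightly delicate point is that the conditioning event has probability zero, so everything has to be read in the density-based (``canonical'') sense of conditional expectation employed throughout the Rice-formula framework of \cite{AW2}; with that reading the computation is short.

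First I would set $W:=XY+Z$ and compute the joint density of the triple $(X,Y,W)$. For each fixed pair $(X,Y)$ the map $Z\mapsto W=XY+Z$ is a translation of $\R^p$ and hence has Jacobian $1$; combining this change of variables with the independence of $X$, $Y$, $Z$ yields
$$
p_{X,Y,W}(x,y,w)=p_X(x)\,p_Y(y)\,p_Z(w-xy),
$$
a continuous function of $(x,y,w)$ (product of continuous functions), so a legitimate canonical version of the density. Integrating out $x$ gives
$$
p_{Y,W}(y,w)=p_Y(y)\int_{\R^{p\times q}}p_X(x)\,p_Z(w-xy)\,dx,
$$
where the integral is finite (bound $p_Z$ by $\|p_Z\|_\infty$ and use $\int p_X=1$) and continuous in $(y,w)$ by dominated convergence, hence again a canonical version. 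Evaluating at $(y,w)=(0,u)$ the term $xy$ vanishes, so $p_{X,Y,W}(x,0,u)=p_X(x)\,p_Y(0)\,p_Z(u)$ and $p_{Y,W}(0,u)=p_Y(0)\,p_Z(u)$, which is \emph{strictly positive} precisely because $p_Y$ and $p_Z$ are assumed nowhere zero.

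Dividing these two quantities produces the conditional density of $X$ given $(Y,W)=(0,u)$:
$$
p_{X\,|\,Y=0,\,W=u}(x)=\frac{p_X(x)\,p_Y(0)\,p_Z(u)}{p_Y(0)\,p_Z(u)}=p_X(x).
$$
Consequently $\E\big(g(X)\,/\,XY+Z=u,\ Y=0\big)=\int g(x)\,p_X(x)\,dx=\E(g(X))$, the integral being absolutely convergent by the hypothesis $\E(|g(X)|)<\infty$, which is exactly the claimed identity.

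The argument has essentially no hard estimate; the only real content is observing that $XY\equiv 0$ on $\{Y=0\}$, so the conditioning event decouples from $X$, and that the non-vanishing of $p_Y$ and $p_Z$ is exactly what licenses the division in the last display. The mild obstacle is purely one of bookkeeping: one must verify that the densities constructed along the way are continuous (so that the pointwise ratio is the canonical version of the conditional density compatible with the conventions of \cite{AW2} under which Rice's formula is applied), which follows from the assumed boundedness and continuity of $p_X,p_Y,p_Z$ together with dominated convergence.
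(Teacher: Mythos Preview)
Your proof is correct and follows essentially the same route as the paper's: compute the joint density $p_{X,Y,XY+Z}(x,y,u)=p_X(x)p_Y(y)p_Z(u-xy)$ via the translation $Z\mapsto XY+Z$, evaluate at $y=0$, and divide by the marginal $p_{Y,XY+Z}(0,u)=p_Y(0)p_Z(u)$ to recover $p_X(x)$ as the conditional density. Your version is slightly more explicit about why the densities are continuous and why the division is licensed, but the argument is otherwise identical.
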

\noindent  The heuristic meaning of the previous lemma is that \emph{if we know that} $Y=0$, then $XY+Z$ does not give information on the distribution of $X$.

\medskip
\noindent For $X=\Big(\frac{1}{d_i}\partial_{1j}f_i\Big)_{2\le j\le n,1\le i\le n}\in \R^{(n-1)\times n}$ and  $
Y=
\big( \partial_1f_1,\dots,\partial_1f_n \big)^t
$ in the previous lemma
we obtain that
\begin{equation}\label{condexp2}
   \E \left(\big|\det(\tilde{L}'')\big|
    \cdot \chi_{\{\tilde{L}''\succ 0 \}}\Big/
    L=u,\nabla \tilde{L}=0,\zeta=z\right) \
   = \  \E \left(\big|\det(M)\big| \cdot \chi_{\{M\succ 0 \}}\right).
\end{equation}

 \noindent  We now consider $\E \left(\big|\det(M)\big| \cdot \chi_{\{M\succ 0 \}}\right)$. We observe that it is now an unconditional expectation.
We will bound it in terms of $u$ and $z$.
We begin by writing the matrix $M$ in a form that will be
useful for our computations. \\

\noindent
{\bf{Notation} }
To simplify notation, from now on  we
simply write $A$ and $B$ for the matrices $A(z)$ and $B(z)$ of Step
4.\\

\noindent  We first observe that $$ M_{\sigma\sigma}=
VV^t+2BB^t+W-\mu I_{n-1}$$ where
$$
  V:= \stackrel{\longleftarrow\quad \bajo n\quad \longrightarrow}{\left(
    \begin{array}{ccc}
  \frac{\sqrt 2}{\sqrt{d_1}}\partial_{12}f_1 & \dots
  &\frac{\sqrt 2}{\sqrt{d_n}}\partial_{12}f_n\\
  \vdots& & \vdots\\
  \frac{\sqrt 2}{\sqrt{d_1}}\partial_{1n}f_1 & \dots
 &\frac{\sqrt 2}{\sqrt{d_n}}\partial_{nn}f_n
 \end{array}\right)}\begin{array}{c}\uparrow\\ {\scriptstyle{n-1}}\\\downarrow \end{array},\quad
 W:=\stackrel{\longleftarrow\quad \bajo n-1\quad \longrightarrow}{\left(
  \begin{array}{ccc}
  2\sum_{i=1}^nf_i^*\partial_{22}f_i & \dots
 &2\sum_{i=1}^nf_i^*\partial_{2n}f_i\\
 \vdots& & \vdots\\
 2\sum_{i=1}^nf_i^*\partial_{n2}f_i & \dots
 &2\sum_{i=1}^nf_i^*\partial_{nn}f_i
\end{array}\right)}
\begin{array}{c}\uparrow\\ {\scriptstyle{n-1}}\\\downarrow \end{array}
$$
and $$ \mu : = 2\sum_{i=1}^n d_i{f^*_i}^2.
$$
Also, introducing for $1\le i\le n$ and $2\le j\le n$,
$$
  \tilde{z}_{ij}:=\frac{2}{\sqrt{d_i}}z_{ij},
  \quad \tilde{z}_{j}:=\big(\tilde z_{1j}, \dots,   \tilde z_{nj}\big),
  \quad \hat{B}=\hat{B}(z):
  = \left(\begin{array}{c}
   \tilde z_2 \\ \vdots \\ \tilde z_n
   \end{array}\right)
  = \stackrel{\longleftarrow\quad \bajo n\quad \longrightarrow}{
   \left(\begin{array}{ccc}
   \tilde z_{12}&\dots & \tilde z_{n2} \\
   \vdots& &\vdots \\
   \tilde z_{1n}&\dots & \tilde z_{nn}
   \end{array}\right)}
\begin{array}{c}\uparrow\\ {\scriptstyle{n-1}}\\\downarrow \end{array}$$
and
$$
   \tilde{z}_{i11}:=\frac{2}{\sqrt{d_i}}z_{i11}, \
   \tilde{f}_i:=\sqrt{d_i}f_i^* \quad\mbox{and} \quad
   \tilde{z}_{11}:=\big( \tilde z_{111}, \dots,  \tilde z_{n11}\big) ,
   \ \tilde{f}:=\big(\tilde{f}_1,\dots,\tilde{f}_n\big)
$$
so that
$$
  A=\stackrel{\bajo n}{
  \left(\begin{array}{c}
  \hat B \\[2pt] \hline
  \alto \frac{1}{\sqrt 2}\tilde z_{11}
  \end{array}\right)}
  \begin{array}{c}
  {\scriptstyle n-1}\\[2pt]
  {\alto \scriptstyle 1}
  \end{array},
$$
we get
$$
   M_{\sigma \tau}= \frac{1}{\sqrt{2}}V \hat{B}^t, \  M_{\sigma \theta }
   =  \frac{1}{2} V \tilde{z}_{11}^t,
   \  M_{\tau \tau} =\frac{1}{2}\hat{B}\hat{B}^t , \ M_{\tau\theta}
  =\frac{\sqrt{2}}{4}\hat{B}\tilde{z}_{11}^t \ \mbox{and}\
  M_{\theta \theta}= \frac{1}{4}\tilde{z}_{11}\tilde{z}_{11}^t
  -\frac{1}{2}\tilde{z}_{11}\tilde{f}^t.
$$
Therefore
$$
  M=\stackrel{\bajo n-1 \hspace*{80pt} n-1 \hspace*{50pt} 1}
       {\left(\begin{array}{c|c|c}
       VV^t+2BB^t+W-\mu I_{n-1}
      & \frac{1}{\sqrt{2}}V \hat{B}^t &\frac{1}{2} V \tilde{z}_{11}^t \\ [3pt]
      \hline
      \alto \frac{1}{\sqrt{2}}\hat{B}V^t&
      \frac{1}{2}\hat{B}\hat{B}^t&
      \frac{\sqrt{2}}{4}\hat{B}\tilde{z}_{11}^t \\ [3pt]
      \hline
      \alto \frac{1}{2}\tilde{z}_{11}V^t&
      \frac{\sqrt{2}}{4}\tilde{z}_{11}\hat{B}^t  &
      \frac{1}{4}\tilde{z}_{11}\tilde{z}_{11}^t -\frac{1}{2}\tilde{z}_{11}
      \tilde{f}^t
      \end{array}\right)}
  \begin{array}{c}
  {\scriptstyle n-1} \\ [4pt]
  \alto {\scriptstyle n-1} \\ [4pt]
  \alto {\scriptstyle 1}
 \end{array}.
$$

\smallskip \noindent
The coefficients of the matrix $W$ appearing in the first block are the centered Gaussian random variables
$\{2\sum_{i=1}^nf_i^*\partial_{jk}f_i:2 \leq j \leq k \leq n \}$
which are independent. Applying Lemma \ref{lemcovar}, we obtain
\begin{equation}\label{varw}
\aligned
&\sigma^2:=\Var(2\sum_{i=1}^nf_i^*\partial_{jk}f_i)
  =4\sum_{i=1}^nd_i (d_i-1){f_i^*}^2\ \leq \ 4\bD(\bD-1)u \
 \quad \text{for}\quad j\neq k,\\
&
\Var(2\sum_{i=1}^nf_i^*\partial_{jj}f_i)=8\sum_{i=1}^nd_i
(d_i-1){f_i^*}^2=2\sigma^2.
\endaligned
\end{equation}
As a consequence, dividing each coefficient of $W$ by $\sigma
\sqrt{n-1}$, one can write the matrix $W$ in the
form:
$$
W=\sigma \sqrt{n-1} \,G
$$
where $G$ is a real random symmetric matrix with entries $a_{ij}$ which
are independent Gaussian centered  satisfying that
$\Var (a_{ij})=1/n$ for $i \neq j$ and $\Var (a_{ij})=2/n$ for $i =
j$.\\

\noindent We continue now with the  bound for  $\E \left(\big|\det(M)\big| \cdot
\chi_{\{M\succ 0 \}}\right)$. The
randomness for this expectation lies in the matrices $V$ and $W$,
which are stochastically independent
by Lemma \ref{lemcovar}.\\

\noindent Denote by $\bar{\lambda}$ the maximum between 0 and the largest
eigenvalue of the matrix $G$. Using
the independence of $V$ and $W$, and the fact that
the determinant of a positive semidefinite matrix is
an increasing function of the diagonal values, we get
\begin{equation}\label{inM1}
\E \left(\big|\det(M)\big| \cdot \chi_{\{M\succ 0 \}}\right) \ \leq
\ \E \left(\big|\det(M_1)\big| \cdot \chi_{\{M_1\succ 0 \}}\right)
\end{equation}
where $M_1$ is given by:
$$
  M_1=\stackrel{\bajo n-1 \hspace*{80pt} n-1 \hspace*{50pt} 1}
   {\left(
   \begin{array}{c|c|c}
   VV^t+2BB^t+\sigma\, \sqrt{n} \,\bar{\lambda} \, I_{n-1}
    & \frac{1}{\sqrt{2}}V \hat{B}^t &\frac{1}{2} V \tilde{z}_{11}^t \\ [3pt]
    \hline
   \alto \frac{1}{\sqrt{2}}\hat{B}V^t&
   \frac{1}{2}\hat{B}\hat{B}^t&
  \frac{\sqrt{2}}{4}\hat{B}\tilde{z}_{11}^t \\ [3pt]
  \hline
  \alto
  \frac{1}{2}\tilde{z}_{11}V^t& \frac{\sqrt{2}}{4}\tilde{z}_{11}\hat{B}^t  &
  \frac{1}{4}\tilde{z}_{11}\tilde{z}_{11}^t -\frac{1}{2}\tilde{z}_{11}\tilde{f}^t
  \end{array}\right)}
  \begin{array}{c}
  {\scriptstyle n-1} \\ [4pt]
  \alto {\scriptstyle n-1} \\ [4pt]
  \alto {\scriptstyle 1}
  \end{array}.
$$
We note that
\begin{equation}\label{inM1_1}
  \det(M_1)=\det (M_2)-\frac{1}{2}\tilde{z}_{11}\tilde{f}^t \det(M_0).
\end{equation}
where
$$
M_0=\stackrel{\bajo \hspace*{40pt} n-1 \hspace*{80pt} n-1}
         {\left(\begin{array}{c|c}
          VV^t+2BB^t+\sigma\, \sqrt{n} \,\bar{\lambda} \, I_{n-1}
          & \frac{1}{\sqrt{2}}V \hat{B}^t \\ [3pt]
          \hline
         \alto \frac{1}{\sqrt{2}}\hat{B}V^t&
         \frac{1}{2}\hat{B}\hat{B}^t
  \end{array}\right)}
  \begin{array}{c}
  {\scriptstyle n-1} \\ [4pt]
  \alto {\scriptstyle n-1}
  \end{array}.
$$
and
$$
  M_2=\stackrel{\bajo \hspace*{30pt} n-1 \hspace*{70pt} n-1
            \hspace*{40pt} 1}
        {\left(\begin{array}{c|c|c}
        VV^t+2BB^t+\sigma\, \sqrt{n} \,\bar{\lambda} \, I_{n-1}
        & \frac{1}{\sqrt{2}}V \hat{B}^t &\frac{1}{2} V \tilde{z}_{11}^t \\ [3pt]
        \hline
        \alto \frac{1}{\sqrt{2}}\hat{B}V^t& \frac{1}{2}\hat{B}\hat{B}^t&
         \frac{\sqrt{2}}{4}\hat{B}\tilde{z}_{11}^t \\ [3pt]
         \hline
         \alto \frac{1}{2}\tilde{z}_{11}V^t&
        \frac{\sqrt{2}}{4}\tilde{z}_{11}\hat{B}^t  &
        \frac{1}{4}\tilde{z}_{11}\tilde{z}_{11}^t
        \end{array}\right)}
  \begin{array}{c}
  {\scriptstyle n-1} \\ [4pt]
  \alto {\scriptstyle n-1} \\ [4pt]
  \alto {\scriptstyle 1}
  \end{array}.
$$
Observe that $M_0$ and $M_2$ can be
written as
$$
   M_0=N_0\,N_0^t \quad \mbox{and} \quad M_2=N_2\,N_2^t$$ where
$$
  N_0:=\stackrel{\bajo n \hspace*{30pt} n
            \hspace*{40pt} n-1\hspace*{20pt} }
    {\left(\begin{array}{c|c|c}
    V &{\sqrt{2}}B &(\sigma\, \sqrt{n} \,\bar{\lambda})^{1/2} \, I_{n-1}\\ [3pt]
    \hline
    \alto \frac{1}{\sqrt{2}}\hat{B}& 0& 0
        \end{array}\right)}
  \begin{array}{c}
  {\scriptstyle n-1} \\ [4pt]
  \alto {\scriptstyle n-1}
  \end{array}
$$
and
$$
N_2:=\stackrel{\bajo n \hspace*{30pt} n
            \hspace*{40pt} n-1\hspace*{20pt} }
        {\left(\begin{array}{c|c|c}
      V & {\sqrt{2}}B &(\sigma\, \sqrt{n} \,\bar{\lambda})^{1/2} \, I_{n-1}
       \\ [3pt]
      \hline
      \alto \frac{1}{\sqrt{2}}\hat{B}& 0& 0 \\ [4pt]
      \hline
      \alto \frac{1}{2}\tilde{z}_{11} &0 & 0
      \end{array}\right)}
  \begin{array}{c}
  {\scriptstyle n-1} \\ [4pt]
  \alto {\scriptstyle n-1} \\ [4pt]
  \alto {\scriptstyle 1}
  \end{array}.
$$
Therefore  they are both positive semidefinite. Moreover
$\det(M_2)$ is the square of the $(2n-1)$-volume of the
parallelotope generated by the $2n-1$ rows of $N_2$. This
volume equals the distance from the last row to the subspace
generated by the rows of $N_0$ times the volume of the parallolotope
defined by these $2n-2$ rows. The distance from the last row to the
subspace generated by the rows of $N_0$ is bounded by the distance
to the smaller subspace generated by the $n-1$ rows of the matrix
$$
\left(
  \begin{array}{c|c|c}
    \frac{1}{\sqrt{2}} \hat{B} & 0 & 0 \\
  \end{array}
\right),
$$
which is clearly equal to
$$
  \dist \left(\frac{1}{{2}}\tilde{z}_{11},\tilde{S}\right)
$$
where  $\tilde S:=\mbox{span}(\tilde z_2,\dots,\tilde z_n)\subset
\R^{n}$. Now we recall that $(f^*_1,\dots, f^*_n)$ satisfies the
conditions $\sum_{i=1}^n f^*_iz_{ij}=0$, $2\le j\le n$, which
implies
$$
  \langle \tilde{f},\tilde{z}_{j}\rangle=2\sum _{i=1}^nf_i^*z_{ij}=0,
  \ 2\le j\le n.
$$
This means that $\tilde{f}$ is orthogonal to $\tilde{S}$ so that
$$
  \dist \left( \frac{1}{{2}}\tilde{z}_{11},\tilde{S}\right)
  =\frac{1}{{2}}\,\Big|\left\langle
  \frac{\tilde{f}}{\|\tilde{f}\|},\tilde{z}_{11}\right\rangle
  \Big|.
$$
Therefore
\begin{equation}\label{detm2}\det(M_2)\le \frac{1}{4}\,\Big|\left\langle
  \frac{\tilde{f}}{\|\tilde{f}\|},\tilde{z}_{11}\right\rangle \Big|^2 \det(N_0)^2\ = \ \frac{1}{4}\,\Big|\left\langle
  \frac{\tilde{f}}{\|\tilde{f}\|},\tilde{z}_{11}\right\rangle \Big|^2
  \det(M_0).\end{equation}
Using this equality to replace $\det(M_2)$ in~\eqref{inM1_1},
we have that
\begin{equation*}
|\det (M_1)| \leq \frac{1}{2}\, \Big( \frac{1}{2}\Big|\left\langle
\frac{\tilde{f}}{\|\tilde{f}\|},\tilde{z}_{11}\right\rangle
\Big|^2+\big|\langle \tilde{f},\tilde{z}_{11}\rangle
\big|\Big)\,\det(M_0),
\end{equation*}
and therefore, since $M_0$ is positive semidefinite
\begin{equation}\label{ineq1}
\E\big(|\det (M_1)|\cdot \chi_{\{M_1\succ 0 \}}\big) \leq
\frac{1}{2}\Big( \frac{1}{2}\Big|\left\langle
\frac{\tilde{f}}{\|\tilde{f}\|},\tilde{z}_{11}\right\rangle
\Big|^2+\big|\langle \tilde{f},\tilde{z}_{11}\rangle
\big|\Big)\,\E\big(\det(M_0)\big).
\end{equation}
We now turn to $\E (\det (M_0))$.  \\

\noindent
{\bf{Notation} }  For a matrix $M$  and
a subset $S$ (respectively $R$)  of its columns (resp. of its rows),
we denote by $M^S$ (resp. $M_R $) the sub-matrix of $M$ consisting
of the columns in $S$ (resp. the rows in $R$). Also, $M^S_R$ denotes
the matrix that consists in erasing the columns not in
$S$ and the rows not in $R$.\\

\begin{lemma}\label{polcaraccasi}
Let $C=(c_{ij})_{i,j}\in \R^{m\times m}$. For  $q\in \Z$, $1\leq
q\leq m$, and  $\lambda \in \R$  define
$$
C_q(\lambda):= C+\Lambda_q \quad \mbox{where} \quad  \Lambda_q:=\stackrel{\bajo \hspace*{6pt} q
            \hspace*{16pt} m-q}
        {\left(\begin{array}{c|c}
        \lambda \,\Id &0  \\ [3pt]
        \hline
        \alto 0&\Id
      \end{array}\right)}
  \begin{array}{c}
  {\scriptstyle q} \\ [4pt]
  \alto {\scriptstyle m-q}
  \end{array},$$
i.e the matrix obtained by adding $\lambda$ to the first $q$
diagonal entries of $C$.\\
Then,
$$
\det (C_q(\lambda))=\det (C)+\sum _{\ell =1}^q \Big( \sum
_{S\subset\{1,\dots,q\}:\# (S)=\ell } \det \big( C^{\overline
S}_{\overline S}\big)\Big) \lambda^ {\ell}.
$$
where $\overline S$ is the complement set of $S$,  with the
convention that $\det(C^{\emptyset}_{\emptyset})=1. \qquad \diamondsuit$
\end{lemma}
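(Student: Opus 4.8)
The plan is to prove this by multilinearity of the determinant in the columns, with no induction required. Write $C_q(\lambda)=C+\lambda\sum_{i=1}^{q}e_ie_i^t$, where $e_1,\dots,e_m$ denotes the standard basis of $\R^m$; thus for $i\le q$ the $i$-th column of $C_q(\lambda)$ is the $i$-th column of $C$ plus $\lambda e_i$, while for $i>q$ it is just the $i$-th column of $C$. Expanding $\det(C_q(\lambda))$ by multilinearity in the first $q$ columns, each of which is a sum of two vectors, produces a sum over subsets $T\subseteq\{1,\dots,q\}$, where $T$ records the columns in which the summand $\lambda e_i$ is selected:
$$
  \det\big(C_q(\lambda)\big)=\sum_{T\subseteq\{1,\dots,q\}}\lambda^{\#T}\,\det(M_T),
$$
with $M_T$ the matrix whose $i$-th column equals $e_i$ for $i\in T$ and equals the $i$-th column of $C$ for $i\notin T$.

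Next I would identify $\det(M_T)$. Fix $T$ and pick a permutation $\pi$ of $\{1,\dots,m\}$ mapping $T$ onto $\{1,\dots,\#T\}$. Permuting both the rows and the columns of $M_T$ by the same $\pi$ leaves the determinant unchanged, the sign factors $\sgn(\pi)$ from rows and from columns cancelling. Since for $i\in T$ the column $e_i$ has a single nonzero entry, a $1$ in row $i$, after this symmetric rearrangement the matrix is block upper-triangular with diagonal blocks $I_{\#T}$ (coming from the $T$-indexed rows and columns) and the principal submatrix $C^{\bar T}_{\bar T}$ of $C$ obtained by deleting the rows and columns indexed by $T$. Hence
$$
  \det(M_T)=\det\big(C^{\bar T}_{\bar T}\big),
$$
the convention $\det(C^{\emptyset}_{\emptyset})=1$ covering the case $T=\{1,\dots,m\}$.

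Substituting this into the first display, pulling out the term $T=\emptyset$ (which contributes $\det(C)$, since then $M_\emptyset=C$), and grouping the remaining terms according to $\ell=\#T$ yields
$$
  \det\big(C_q(\lambda)\big)=\det(C)+\sum_{\ell=1}^{q}\Big(\sum_{\substack{S\subseteq\{1,\dots,q\}\\ \#S=\ell}}\det\big(C^{\bar S}_{\bar S}\big)\Big)\lambda^{\ell},
$$
which is exactly the asserted identity. There is no real obstacle in this argument; the only place needing a moment's care is the bookkeeping in the second paragraph, i.e.\ checking that the simultaneous row-and-column permutation preserves the determinant and brings $M_T$ to block-triangular form, so that each $e_i$ column ($i\in T$) contributes exactly a factor $1$ and leaves behind the principal submatrix $C^{\bar T}_{\bar T}$.
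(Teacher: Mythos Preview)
Your proof is correct. The paper's own proof is a one-line hint: ``Write the Taylor expansion of $\det(C_q(\lambda))$ at $\lambda=0$ and compute the successive derivatives at this point.'' Your argument via multilinearity in the first $q$ columns is a different but equally standard route: rather than differentiating and invoking the formula for derivatives of a determinant, you expand the polynomial in $\lambda$ combinatorially and read off each coefficient as a principal minor. The two approaches identify the same coefficients of the same degree-$q$ polynomial in $\lambda$; yours is arguably more self-contained because it avoids any calculus and makes the appearance of the principal submatrices $C^{\bar S}_{\bar S}$ explicit via the block-triangular reduction, whereas the Taylor approach implicitly relies on knowing that $\partial_\lambda\det(\cdot)$ picks out cofactor sums. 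Either way the identity is elementary, and your bookkeeping (simultaneous row/column permutation preserving the determinant and producing the block form $\bigl(\begin{smallmatrix}I&*\\0&C^{\bar T}_{\bar T}\end{smallmatrix}\bigr)$) is handled cleanly.
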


\noindent We set $\lambda: =\sigma \,\sqrt{n}\,\bar{\lambda}$  and write
$M_0= C+  \Lambda $ where
    $$C:=\stackrel{\bajo \hspace*{10pt} n-1
            \hspace*{40pt} n-1}
        {\left(\begin{array}{c|c}
        VV^t+2BB^t &\frac{1}{\sqrt{2}}V \hat{B}^t  \\ [3pt]
        \hline
        \alto \frac{1}{\sqrt{2}}\hat{B}V^t&\frac{1}{2}\hat{B}\hat{B}^t
      \end{array}\right)}
  \begin{array}{c}
  {\scriptstyle n-1} \\ [4pt]
  \alto {\scriptstyle n-1}
  \end{array} \
 \mbox{ and } \ \Lambda:=\stackrel{\bajo \hspace*{6pt} n-1
            \hspace*{16pt} n-1}
        {\left(\begin{array}{c|c}
        \lambda \,\Id &0  \\ [3pt]
        \hline
        \alto 0&\Id
      \end{array}\right)}
  \begin{array}{c}
  {\scriptstyle n-1} \\ [4pt]
  \alto {\scriptstyle n-1}
  \end{array}.
$$
Then, by Lemma~\ref{polcaraccasi} and using that the random variables involved in
the expectation of $M_0$ are the elements of $V$ and
$\bar{\lambda}$, which are independent,  we obtain
\begin{equation} \label{esperv}
\E \big(\det (M_0)\big)
 \ =\ \E \big(\det (C)\big)+\sum _{\ell =1}^{n-1}  \sum
_{\scriptsize{\begin{array}{c}S\subset \{1,\dots,n-1\}\\ \# (S)=\ell
\end{array}}} \E \big( \det \big( C^{\overline S}_{\overline S}\big)
\big) (\sigma \sqrt{n})^{\ell}~\E \big(\bar{\lambda}^ {\ell} \big).
\end{equation}
We now bound the expectations appearing here. We first consider $\E \big(\det (C)\big)$.

\begin{lemma}\label{detbasico} Set $n,k\in \N$, $1\le k<n$. Let  $A=(a_{ij})_{i,j},B\in \R^{k\times
n}$ and $C\in \R^{(n-1)\times n}$. Define
$$
Q:=\stackrel{\bajo \hspace*{20pt} k\hspace*{40pt} n-1}
        {\left(\begin{array}{c|c}
       A\,A^t+B\,B^t&A\,C^t \\ [3pt]
       \hline
      \alto C\,A^t&C\,C^t
       \end{array}\right)}
       \begin{array}{l}
       {\scriptstyle k} \\ [4pt]
       {\scriptstyle n-1}
       \end{array}\ \in \R^{(k+n-1)\times (k+n-1)}.
$$
Then,
\begin{equation*}
\det(Q)\ =\ \det(CC^t)\det(BB^t) \ +\ \sum_{\#(S)=k-1}\Big(
\sum_{i=1}^k \sum _{j=1}^n  (-1)^{i+j-1}  a_{ij} \det (B^{S}_{\overline i})
\det (C^{\overline j})\Big)^2.\qquad \diamondsuit
\end{equation*}
\end{lemma}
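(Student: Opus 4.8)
The plan is to expand $\det(Q)$ by the Cauchy--Binet formula. Write $Q = N N^t$ where
$$
N := \begin{pmatrix} A & B \\ C & 0 \end{pmatrix} \in \R^{(k+n-1)\times(2n)},
$$
since then the first block row of $NN^t$ is $(AA^t + BB^t \mid AC^t)$ and the second is $(CA^t \mid CC^t)$, as required. By Cauchy--Binet, $\det(Q) = \sum_{|T| = k+n-1} \det(N^T)^2$, the sum running over all $(k+n-1)$-subsets $T$ of the $2n$ columns. Label the first $n$ columns (those of $\begin{smallmatrix}A\\ C\end{smallmatrix}$) as ``type-I'' and the last $n$ (those of $\begin{smallmatrix}B\\ 0\end{smallmatrix}$) as ``type-II''. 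Because the bottom block of a type-II column is zero and $C$ has only $n-1$ rows, a subset $T$ contributes a nonzero $\det(N^T)$ only if it contains at least $n-1$ type-I columns; so $T$ has either exactly $n-1$ or exactly $n$ type-I columns.

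First I would handle the case of exactly $n$ type-I columns: then $T$ consists of all $n$ columns of $\begin{smallmatrix}A\\ C\end{smallmatrix}$ together with $k-1$ columns of $\begin{smallmatrix}B\\0\end{smallmatrix}$ indexed by some $S$ with $\#(S) = k-1$. Such an $N^T$ has a block lower-triangular structure after reordering rows: the bottom $n-1$ rows restricted to these columns are $(C \mid 0)$, a $(n-1)\times(n + k -1)$ block whose only nonzero part is the full $(n-1)\times n$ matrix $C$. Computing $\det(N^T)$ by a generalized Laplace expansion along those $n-1$ rows picks out, for each choice of which $n-1$ of the $n$ type-I columns go to $C$, a factor $\det(C^{\overline j})$ (deleting column $j$) times the complementary $k\times k$ minor of the top block, which is $\det\big((A^{j} \mid B^S)\big)$ up to sign --- and expanding that $k\times k$ determinant along its first column $A^j = (a_{1j},\dots,a_{kj})^t$ gives $\sum_i (-1)^{i+1} a_{ij}\det(B^S_{\overline i})$. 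Collecting the signs carefully yields that the contribution of all $T$ with $n$ type-I columns, summed over $S$ with $\#(S)=k-1$, is exactly $\sum_{\#(S)=k-1}\big(\sum_{i,j}(-1)^{i+j-1} a_{ij}\det(B^S_{\overline i})\det(C^{\overline j})\big)^2$ --- note the cross terms in squaring each bracket come precisely from the two distinct type-I columns one can delete, and Cauchy--Binet squares each $\det(N^T)$, so all the bookkeeping is internally consistent.

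Then the case of exactly $n-1$ type-I columns: $T$ consists of all but one, say column $j_0$, of $\begin{smallmatrix}A\\ C\end{smallmatrix}$, together with $k$ columns of $\begin{smallmatrix}B\\0\end{smallmatrix}$ indexed by $S'$, $\#(S')=k$. Here the bottom $n-1$ rows of $N^T$ are $(C^{\overline{j_0}} \mid 0)$, an $(n-1)\times(n-1+k)$ block, and Laplace expansion along them forces the $n-1$ type-I columns to fill $C^{\overline{j_0}}$, leaving the top-block minor $\det(B^{S'})$; thus $\det(N^T) = \pm \det(C^{\overline{j_0}})\det(B^{S'})$, and summing the squares over $j_0$ and $S'$ gives $\big(\sum_{j_0}\det(C^{\overline{j_0}})^2\big)\big(\sum_{\#(S')=k}\det(B^{S'})^2\big) = \det(CC^t)\det(BB^t)$ by Cauchy--Binet applied to $C$ and to $B$ separately. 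Adding the two cases gives the claimed identity. The main obstacle I anticipate is purely combinatorial: tracking the Laplace-expansion signs so that the off-diagonal terms assemble into a perfect square $\big(\sum_{i,j}(-1)^{i+j-1}a_{ij}\det(B^S_{\overline i})\det(C^{\overline j})\big)^2$ rather than an unsigned sum; the cleanest route is to fix a global column order on the $2n$ columns, use the signed-minor form of generalized Laplace, and verify the sign on one representative term $a_{ij}a_{i'j'}$ with $j\neq j'$.
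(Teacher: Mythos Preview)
Your proposal is correct and follows essentially the same approach as the paper: factor $Q=NN^t$ with $N=\left(\begin{smallmatrix}A&B\\C&0\end{smallmatrix}\right)$, apply Cauchy--Binet, and split into the two cases according to whether all $n$ or only $n-1$ of the left-block columns are selected. The paper carries out the Laplace expansion in the first case along the last $k-1$ columns rather than along the bottom $n-1$ rows as you do, but this is a cosmetic difference and the sign bookkeeping works out the same way.
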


\medskip
\noindent  Applying this result
for  $k:=n-1$, $A:=V$, $B:=\sqrt{2}B$ and
$C:=(1/\sqrt{2})\hat{B}$ we get

$$
\det(C)=\det(BB^t)\det(\hat{B}\hat{B}^t)+\sum_{\# (S)=n-2} \Big(
\sum_{i=1}^{n-1}\sum_{j=1}^n (-1)^{i+j-1} v_{ij}\det \big(
\sqrt{2}B^{S}_{\overline i}\big) \det \big(
\frac{1}{\sqrt{2}}\hat{B}^{\overline j}\big) \Big)^2.
$$
Since the random variables $v_{ij}=\sqrt{2/d_j}\,\partial_{1(i+1)}f_j$ are centered and independent,
and since $\Var(v_{ij})=2(d_j-1)$, we obtain
\begin{equation}\label{termino1}
\aligned \E \big(\det(C)
\big)&=\det(BB^t)\det(\hat{B}\hat{B}^t)+\sum_{\# (S)=n-2}\E \left(
\Big( \sum_{i=1}^{n-1}\sum_{j=1}^n\pm v_{ij}\det \big(
\sqrt{2}B^{S}_{\overline i}\big) \det \big(
\frac{1}{\sqrt{2}}\hat{B}^{\overline j}\big) \Big)^2 \right)\\
&=\det(BB^t)\det(\hat{B}\hat{B}^t)+\sum_{\# (S)=n-2} \sum_{i=1}^{n-1}\sum_{j=1}^n 2(d_j-1)
2^{n-2}\Big(\det \big( B^{S}_{\overline i}\big) \Big)^2
\frac{1}{2^{n-1}} \Big( \det \big(\hat{B}^{\overline j}\big) \Big)^2\\
&\leq \det(BB^t)\det(\hat{B}\hat{B}^t)+
(\bD-1)\sum_{\# (S)=n-2} \sum_{i=1}^{n-1}\sum_{j=1}^n \Big(\det \big( B^{S}_{\overline i}\big) \Big)^2
 \Big( \det \big(\hat{B}^{\overline j}\big) \Big)^2\\
&= \det(\hat{B}\hat{B}^t)\Big( \det(BB^t) + (\bD-1) \sum_{i=1}^{n-1}
\det \big( B_{\overline i}B_{\overline i}^t\big)\Big)
\endaligned
\end{equation}
where in the last equality we  applied  twice the well-known Cauchy-Binet
formula, see for example~\cite{Gant}: For $m\le n$,  $A\in \R^{m\times n}$  and $B\in \R^{n\times m}$,
\begin{equation}\label{CB}
  \det (A\,B)= \sum_{S:\# (S)=m}\det (A^S)\det (B_S).
\end{equation}

\noindent Now we compute $\E \big( \det \big( C^{\overline
S}_{\overline
S}\big) \big)$   for $\# (S) =\ell$, $1\le \ell\le n-1$.\\

\noindent $\bullet$ For $\ell =n-1$ it is obvious that
\begin{equation}\label{eq:ell=n-1}
\det \big(C^{\overline S}_{\overline S}\big)
=(1/2^{n-1})\, \det(\hat{B}\hat{B}^t).
\end{equation}

\noindent $\bullet$ For $1 \leq \ell \leq n-2$, we note that
 for each $S\subset \{1,\dots, n-1\}$ with  $\# (S)=\ell$,  we have
$$
 C^{\overline S}_{\overline S}:=
   \stackrel{\bajo \hspace*{20pt} n-1-\ell
            \hspace*{50pt} n-1}
        {\left(\begin{array}{c|c}
       V_{\overline S}(V_{\overline S})^t+2B_{\overline S}(B_{\overline S})^t
      &\frac{1}{\sqrt{2}}V_{\overline S} \hat{B}^t  \\ [4pt]
     \hline
     \alto \frac{1}{\sqrt{2}}\hat{B}(V_{\overline S})^t
  &\frac{1}{2}\hat{B}\hat{B}^t
      \end{array}\right)}
  \begin{array}{l}
  {\scriptstyle n-1-\ell} \\ [4pt]
  \alto {\scriptstyle n-1}
  \end{array}
$$ and we obtain, imitating the computation for the
case $\det(C)$,
\begin{equation}\label{termino2}\E \big(\det(C^{\overline S}_{\overline S})
\big)\le \frac{\det(\hat{B}\hat{B}^t)}{2^\ell}\Big(\det(B_{\overline
S}(B_{\overline S})^t)+ (\bD-1) \sum_{\scriptsize
\begin{array}{cc}1\le i \le n-1-\ell\\i\not\in S\end{array}}\det
\big( B_{\overline{S\cup \{i\}}}(B_{\overline{S\cup
\{i\}}})^t\big)\Big).\end{equation}
Finally  we give an upper-bound for $\E \big( \overline{\lambda}^{\ell} \big)$.
\begin{lemma}\label{momlambda}
Let $G=(a_{ij})_{1\le i,j\le n}$  for $n \geq 2$  be a real random symmetric matrix such that the   the random variables $\{
a_{ij},1 \leq i \leq j \leq n\}$ are independent Gaussian centered,
$Var (a_{ij})=1/n$ for $i \neq j$ and $Var (a_{ij})=2/n$ for $i =
j$, and denote by $\bar{\lambda}$ the maximum between 0 and   the largest
eigenvalue of the matrix $G$.
Then, for $1 \leq \ell \leq n$,
\begin{equation*}
\E \big( \overline{\lambda}^{\ell} \big) \leq 2\cdot 4^{\ell}.\qquad \diamondsuit
\end{equation*}
\end{lemma}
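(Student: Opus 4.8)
\medskip\noindent
The idea is to reduce to the operator norm of $G$ and then control its moments by combining a sharp bound on $\E\|G\|$ with Gaussian concentration. Since $\bar\lambda$ equals either $0$ or the largest eigenvalue of $G$ --- and the latter has absolute value at most $\|G\|$ --- we have $\bar\lambda\le\|G\|$, so it suffices to prove $\E(\|G\|^\ell)\le 2\cdot 4^\ell$ for $1\le\ell\le n$.

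First I would establish the ``edge'' estimate $\E\|G\|\le 2$ (for the computation below any absolute bound $\le 3$ is enough). Writing $\|G\|=\sup_{u\in S^{n-1}}|\langle u,Gu\rangle|$ and $\lambda_{\max}(G)=\sup_{u\in S^{n-1}}\langle u,Gu\rangle$, the prescribed variances give, for $\|u\|=\|v\|=1$,
\[
   \Var\big(\langle u,Gu\rangle\big)=\tfrac2n,\qquad
   \E\big((\langle u,Gu\rangle-\langle v,Gv\rangle)^2\big)=\tfrac4n\big(1-\langle u,v\rangle^2\big)\le \tfrac8n\big(1-\langle u,v\rangle\big),
\]
the last step being $1-t^2\le 2(1-t)$ for $t\le 1$. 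Hence the $\mathbb{L}^2$-increments of $u\mapsto\langle u,Gu\rangle$ are dominated by those of the linear Gaussian field $u\mapsto\tfrac{2}{\sqrt n}\langle g,u\rangle$ on $S^{n-1}$ with $g\sim N(0,I_n)$, and the Sudakov--Fernique comparison inequality gives $\E\lambda_{\max}(G)\le\tfrac2{\sqrt n}\E\|g\|\le 2$. Combining this with the symmetry $G\overset d=-G$ and the sub-Gaussian concentration of $\lambda_{\max}$ (Poincar\'e / Gaussian concentration, variance proxy $2/n$) bounds $\E\|G\|=\E\max(\lambda_{\max}(G),\lambda_{\max}(-G))$ by an absolute constant; this is the classical non-asymptotic bound on the spectral norm of the GOE, which one may alternatively simply cite.

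Next I would record $\P(\|G\|>s)\le e^{-n(s-c_0)^2/4}$ for $s\ge c_0:=\E\|G\|$, again by Gaussian concentration, since $\|G\|$ is $\sqrt{2/n}$-Lipschitz in the standardized coefficients. Then I would split the moment at a fixed level $s_0$ slightly above $c_0$ ($s_0=4$ works since $c_0\le 3$):
\[
   \E(\|G\|^\ell)\le s_0^\ell+s_0^\ell\,\P(\|G\|>s_0)+\int_{s_0}^\infty \ell\,s^{\ell-1}\,\P(\|G\|>s)\,ds .
\]
The first term is $4^\ell$, and the other two are $O(4^\ell e^{-cn})$ for a suitable $c>0$: using $1\le\ell\le n$ together with the elementary bounds $(s/s_0)^{\ell-1}\le(s/s_0)^{n}\le e^{n(s-s_0)/s_0}$ (from $\ln(1+x)\le x$) and $(s-c_0)^2\ge(\mathrm{const})\cdot(s-s_0)$ for $s\ge s_0$, one checks that $\int_{s_0}^\infty \ell\,s^{\ell-1}e^{-n(s-c_0)^2/4}\,ds\le 4^\ell e^{-cn}$. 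Adding up, $\E(\|G\|^\ell)\le 4^\ell\bigl(1+O(e^{-cn})\bigr)\le 2\cdot 4^\ell$ for all sufficiently large $n$, the remaining small values of $n$ being covered by the trivial estimate $\E(\|G\|^\ell)\le\E(\|G\|_F^\ell)$ with $\E\|G\|_F^2=n+1$.

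The step I expect to be delicate is the edge estimate $\E\|G\|\le 2$ (or any absolute-constant bound): crude moment estimates such as $\E\|G\|^2\le\E\,\trace(G^2)=n+1$ are far too weak --- they would force the splitting level $s_0$ to grow with $n$ and wreck the factor $4^\ell$ --- so one genuinely needs the sharp constant, which is precisely why a Gaussian comparison argument (rather than a net or naive moment bound) is the right tool; moreover the increment computation feeding Sudakov--Fernique must be arranged through $\lambda_{\max}(G)$ and the symmetry $G\overset d=-G$, since the naive indexing of $\|G\|$ by $S^{n-1}\times\{\pm1\}$ fails the comparison hypothesis. A fully self-contained alternative is the moment method: bound $\bar\lambda^{2k}\le\trace(G^{2k})$, estimate $\E\,\trace(G^{2k})\le n\cdot 4^{k}(1+o(1))$ by a Wick/closed-walk count uniformly for $k$ up to order $n$, and optimize over $k$ through $\E(\bar\lambda^\ell)\le(\E(\bar\lambda^{2k}))^{\ell/2k}$; there the main obstacle is establishing the combinatorial estimate uniformly over the whole range $k\le n$.
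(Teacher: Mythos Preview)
Your approach is correct and follows essentially the same skeleton as the paper's: obtain a sub-Gaussian tail for the top eigenvalue centered at $2$, write $\E(\bar\lambda^\ell)=\int_0^\infty \ell y^{\ell-1}\P(\bar\lambda>y)\,dy$, split at $y=4$, and use $\ell\le n$ to absorb the polynomial factor into the Gaussian tail. The paper simply \emph{cites} the tail bound $\P(\bar\lambda\ge 2+\sqrt2\,t)<e^{-nt^2/2}$ from Davidson--Szarek and then carries out the integral explicitly: with the substitution $u=\sqrt{n/2}\,(y-2)$ and the inequality $(1+u/\sqrt{2n})^{\ell-1}\le e^{u\sqrt{n/2}}$ one gets a clean Gaussian integral bounded by $4^\ell$, so the result holds for every $n\ge2$ without a separate small-$n$ case.

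Two places where your write-up is longer than necessary: first, the detour through $\|G\|$ is avoidable, since $\bar\lambda\le\max(0,\lambda_{\max}(G))$ and your Sudakov--Fernique argument already controls $\lambda_{\max}(G)$ directly --- passing to $\|G\|=\max(\lambda_{\max}(G),\lambda_{\max}(-G))$ forces you to argue separately that $\E\|G\|\le3$, which you leave somewhat informal. Second, reconstructing the edge bound from Sudakov--Fernique plus Borell--TIS is fine and self-contained, but yields the same concentration inequality the paper quotes; once you have it, you can follow the paper's explicit change of variables rather than the qualitative $O(4^\ell e^{-cn})$ estimate, and then the ad hoc treatment of $n=2$ via $\|G\|_F$ becomes unnecessary. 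In short: same proof, but you are rebuilding a cited inequality and adding a step ($\bar\lambda\le\|G\|$) that the paper does not need.
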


\noindent
Plugging Inequalities (\ref{termino1}), (\ref{termino2}),
\eqref{eq:ell=n-1}  and
Lemma~(\ref{momlambda}) into Formula~\eqref{esperv} we obtain

\begin{equation*}
\aligned \E \big(\det (M_0)\big) \leq & \det(\hat{B}\hat{B}^t) \Big(
\det(BB^t) + (\bD-1) \sum_{i=1}^{n-1} \det \big( B_{\overline
i}B_{\overline i}^t\big) + 2^n(\sigma\sqrt{n})^{n-1}
\\ & + \ \sum _{\ell =1}^{n-2}2^{\ell+1}(\sigma \sqrt{n})^{\ell}  \Big(\sum
_{\# (S)=\ell} \Big(\det(B_{\overline S}(B_{\overline S})^t)+ (\bD-1)
\sum_{i=1}^{n-1-\ell} \det \big( B_{\overline{S\cup
\{i\}}}(B_{\overline{S\cup \{i\}}})^t\big)\Big)\Big)\Big)\\
\leq & \det(\hat{B}\hat{B}^t) \Big( \det(BB^t) + (\bD-1)
\sum_{i=1}^{n-1} \det \big( B_{\overline i}B_{\overline i}^t\big) +
 2^n(\sigma\sqrt{n})^{n-1}
\\ & + \ \sum _{\ell =1}^{n-2}2^{\ell+1}(\sigma \sqrt{n})^{\ell}   \Big(\sum
_{\# (S)=\ell} \det(B_{\overline S}(B_{\overline S})^t)+
(\bD-1)(\ell+1)\sum _{\# (T)=\ell+1}  \det \big(
B_{\overline{T}}(B_{\overline{T}})^t\big)\Big)\Big) .
\endaligned
\end{equation*}
This finally implies, by Identity~(\ref{condexp2}) and
Inequalities~(\ref{inM1}) and (\ref{ineq1}) the inequality we will
focuse on  in next step.

\begin{equation}\label{condexp22}
\aligned
   \E \left(\big|\det(\tilde{L}'')\big|
    \right.& \cdot \chi_{\{\tilde{L}''\succ 0 \}} \Big/ \left.
    L=u,\nabla \tilde{L}=0,\zeta=z\right) \
   \le  \frac{1}{2}\Big( \frac{1}{2}\Big|\left\langle
\frac{\tilde{f}}{\|\tilde{f}\|},\tilde{z}_{11}\right\rangle
\Big|^2+\big|\langle \tilde{f},\tilde{z}_{11}\rangle
\big|\Big)\,\det(\hat{B}\hat{B}^t)\cdot\\
&\cdot \Big( \det(BB^t) + (\bD-1) \sum_{i=1}^{n-1} \det \big(
B_{\overline i}B_{\overline i}^t\big) + 2^{n}(\sigma \sqrt{n})^{n-1}
\\ & + \ \sum _{\ell =1}^{n-2}2^{\ell+1}(\sigma \sqrt{n})^{\ell}   \Big(\sum
_{\# (S)=\ell} \det(B_{\overline S}(B_{\overline S})^t)+
(\bD-1)(\ell+1)\sum _{\# (T)=\ell+1}  \det \big(
B_{\overline{T}}(B_{\overline{T}})^t\big)\Big)\Big) .
   \endaligned
\end{equation}

\bigskip \noindent
\textbf{Step 6}. We  put together the calculations of Steps 4 and 5
to compute an upper bound for $p_{\underline{L}}(u)$ following
Inequality~(\ref{densbound3}). We will also use the following
auxiliary result:

\begin{lemma}\label{bb}  \begin{equation*}\aligned
&\frac{2^{2n-1}}{\mathcal{D}}\det(BB^t) \leq \det(\hat{B}\hat{B}^t)
\leq \frac{2^{2(n-1)}\bD}{\mathcal{D}}    \det(BB^t),
\\
&\frac{2^{2n-1-\ell}}{\mathcal{D}}\det(B_{\overline S}(B_{\overline
S})^t)  \leq \det(\hat{B_{\overline S}}(\hat{B}_{\overline S})^t)
\leq \frac{2^{2(n-1-\ell)}\bD^{\ell+1}}{\mathcal{D}} \det(B_{\overline
S}(B_{\overline S})^t)\quad \mbox{for} \ \ S\subset \{1,\dots,n\},
\#(S)=\ell.
\endaligned
\end{equation*}

\end{lemma}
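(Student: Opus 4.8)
The plan is to exploit the fact that $\hat B$ is nothing but $B$ with its $i$-th column multiplied by $2/\sqrt{d_i}$, so that $\hat B\hat B^t$ is a diagonal congruence of $BB^t$, and then to track the scaling factors minor by minor through the Cauchy--Binet formula~\eqref{CB}. Write $\Delta:=\diag\big(2d_1^{-1/2},\dots,2d_n^{-1/2}\big)\in\R^{n\times n}$. Since the $(j,i)$ entry of $\hat B$ is $\tilde z_{ij}=\frac{2}{\sqrt{d_i}}z_{ij}$, we have $\hat B=B\Delta$, and for any set $S$ of $\ell$ rows of $B$ also $\hat B_{\overline S}=B_{\overline S}\Delta$, because deleting rows commutes with rescaling columns. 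In particular $\hat B_{\overline S}\hat B_{\overline S}^t=(B_{\overline S}\Delta)(B_{\overline S}\Delta)^t$.

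Next I would apply Cauchy--Binet to this product of an $(n-1-\ell)\times n$ matrix with its transpose, using that $\Delta$ is diagonal, so that $(B_{\overline S}\Delta)^T=(B_{\overline S})^T\Delta_T$ with $\det(\Delta_T)^2=\prod_{i\in T}4/d_i$ for every $T\subset\{1,\dots,n\}$ with $\#(T)=n-1-\ell$. This gives
\[
   \det\big(\hat B_{\overline S}\hat B_{\overline S}^t\big)
   =\sum_{\#(T)=n-1-\ell}\Big(\prod_{i\in T}\frac{4}{d_i}\Big)\,
     \det\big((B_{\overline S})^T\big)^2 ,
   \qquad
   \det\big(B_{\overline S}B_{\overline S}^t\big)
   =\sum_{\#(T)=n-1-\ell}\det\big((B_{\overline S})^T\big)^2 ,
\]
the second identity being Cauchy--Binet with $\Delta=\Id$. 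It then remains to bound the weights $\prod_{i\in T}4/d_i$ uniformly in $T$: if $\#(T)=n-1-\ell$, then $\{1,\dots,n\}\setminus T$ has $\ell+1$ elements, so $\prod_{i\in T}d_i=\mathcal{D}\big/\prod_{i\notin T}d_i$ with $2^{\ell+1}\le\prod_{i\notin T}d_i\le\bD^{\ell+1}$ by the standing assumption $2\le d_i\le\bD$, whence
\[
   \frac{4^{\,n-1-\ell}\,2^{\ell+1}}{\mathcal{D}}
   \ \le\ \prod_{i\in T}\frac{4}{d_i}
   \ \le\ \frac{4^{\,n-1-\ell}\,\bD^{\ell+1}}{\mathcal{D}} .
\]

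Pulling these two bounds out of the sum in the formula for $\det(\hat B_{\overline S}\hat B_{\overline S}^t)$ and recalling $4^{\,n-1-\ell}2^{\ell+1}=2^{\,2n-1-\ell}$ and $4^{\,n-1-\ell}=2^{\,2(n-1-\ell)}$ yields exactly the second chain of inequalities of the lemma; the first chain is the special case $S=\emptyset$, $\ell=0$ (where $4^{\,n-1}\bD^{1}=2^{2(n-1)}\bD$). There is essentially no obstacle here: the only points requiring care are that the super/subscript conventions line up (the subscript set indexes rows of $B$, the columns being indexed by the rescaled coordinates $1\le i\le n$) and that the bookkeeping of the powers of $2$ hidden in $4^{\,n-1-\ell}2^{\ell+1}$ matches the exponents claimed in the statement.
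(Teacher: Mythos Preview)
Your proof is correct and follows essentially the same route as the paper: write $\hat B=B\,\diag(2/\sqrt{d_1},\dots,2/\sqrt{d_n})$, expand $\det(\hat B_{\overline S}\hat B_{\overline S}^t)$ via Cauchy--Binet into a sum of squared minors weighted by $\prod_{i\in T}4/d_i$, and bound those weights using $2\le d_i\le\bD$. The only cosmetic difference is that the paper carries out the case $\ell=0$ explicitly (summing over the single excluded column $k$ and noting the weight is $2^{2(n-1)}d_k/\mathcal D$) and then declares the general case analogous, whereas you handle arbitrary $\ell$ in one pass and recover $\ell=0$ as a specialisation.
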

\proof We have $
  \hat{B}=B~H$ for the diagonal matrix
$$
 H:=\stackrel{\longleftarrow\quad \bajo n\quad \longrightarrow}
        {\left(\begin{array}{ccc}
        \frac{2}{\sqrt{d_1}}&  &   \\
       & \ddots&  \\
      & & \frac{2}{\sqrt{d_n}}
      \end{array}\right)}
   \begin{array}{c}\uparrow\\ {\scriptstyle{n}}\\\downarrow \end{array}.
$$
By  Cauchy-Binet formula \eqref{CB},
\begin{eqnarray*}
  \det(\hat{B}\hat{B}^t)&=&\sum_{k=1}^n
  \det(\hat{B}^{\overline k})\det((\hat{B}^{\overline k})^t)\\
& = & \ \sum_{k=1}^n \Big(\det(B^{\overline k}H_{\overline
k}^{\overline k})\big)^2\
 = \ \sum_{k=1}^n \big(\det(H_{\overline k}^{\overline k})\big)^2 \big(\det({B}^{\overline k})\big)^2
 \\
 &=& \sum_{k=1}^n \frac{2^{2(n-1)}d_k}{\mathcal{D}} \big(\det({B}^{\overline
 k})\big)^2.
\end{eqnarray*}
 The proof concludes using
 $$\frac{2^{2n-1}}{\mathcal{D}} \le \frac{2^{2(n-1)}d_k}{\mathcal{D}} \le \frac{2^{2(n-1)} \bD}{\mathcal{D}}
 \
 \ \mbox{since} \ \ d_k\ge 2
 \quad\mbox{and} \quad \sum_{k=1}^n
  \big(\det({B}^{\overline k})\big)^2=\det (BB^t).$$
The proof of the second assertion is analogous. \eproof
\\

\noindent According to Inequalities~(\ref{densbound3}),
(\ref{condexp22}), and Identity~(\ref{factdensidad2}), we get:
\begin{equation*}
  \aligned  p_{\underline{L}}(u)  & \le \sigma_V(V)\, \int_{(S\times \R )^n}
   \E \left(\big|\det(\tilde{L}'')\big|\cdot
   \chi_{\{\tilde{L}''\succ 0 \}} \Big/
   L=u,\nabla \tilde{L}=0,\zeta=z \right) \cdot
  p_{L,\nabla \tilde{L},\zeta}(u,0,z)~dz\\
  & \le \sigma_V(V)\, \int_{(S\times \R )^n}
  \frac{1}{2}\Big( \frac{1}{2}\Big|\left\langle
\frac{\tilde{f}}{\|\tilde{f}\|},\tilde{z}_{11}\right\rangle
\Big|^2+\big|\langle \tilde{f},\tilde{z}_{11}\rangle
\big|\Big)\,\det(\hat{B}\hat{B}^t)\cdot\\
&\cdot \Big( \det(BB^t) + (\bD-1) \sum_{i=1}^{n-1} \det \big(
B_{\overline i}B_{\overline i}^t\big) + 2^{n}(\sigma \sqrt{n})^{n-1}
\\ & + \ \sum _{\ell =1}^{n-2}2^{\ell+1}(\sigma \sqrt{n})^{\ell}   \Big(\sum
_{\# (S)=\ell} \det(B_{\overline S}(B_{\overline S})^t)+
(\bD-1)(\ell+1)\sum _{\# (T)=\ell+1}  \det \big(
B_{\overline{T}}(B_{\overline{T}})^t\big)\Big)\Big) \cdot\\
&  \frac{e^{-u/2}}{(2\pi)^{n}2^{n-1}|\det (A)|\big(\det
    (B B^t)\big)^{1/2}\,\sqrt{u}}\,\cdot\,p_{\zeta}(z)dz.
  \endaligned
\end{equation*}
Here we notice that $|\det(A)|$ is the $n$-volume of the
parallelotope generated in $\R^n$ by the rows of $A$, that is, in
the same way we computed  $\det(M_2)$ in (\ref{detm2}), we have
$$
 |\det(A)|=\dist \left(\frac{1}{\sqrt{2}}\tilde{z}_{11},\tilde{S}\right)\,
          \det(\hat{B} \hat{B}^t ) ^{1/2}
          = \frac{1}{\sqrt{2}}\,\Big|\left\langle
  \frac{\tilde{f}}{\|\tilde{f}\|},\tilde{z}_{11}\right\rangle \Big|  \big(\det(\hat{B} \hat{B}^t ) \big)^{1/2}
$$
where like previously $\tilde S:=\mbox{span}(\tilde z_2,\dots,\tilde
z_n)\subset \R^{n}$ is the hyperplane spanned by the the rows of
$\hat B$. Therefore, using Cauchy-Schwartz inequality for $\langle
\tilde f/\|\tilde f\|,\tilde z_{11}\rangle$, applying Lemma~\ref{bb}
and the fact that $2^n\le \mathcal{D}$, we get

\begin{equation*}
\aligned p_{\underline{L}}(u)&\leq \sigma _V (V)\int_{(S\times
\R)^n} \frac{\sqrt
2}{(4\pi)^n}\big(\frac{1}{2}\|\tilde{z}_{11}\|+\|\tilde{f}\|
\big)\Big( \frac{\det(\hat{B}\hat{B}^t)}{\det(BB^t)}
\Big)^{1/2}\cdot\\
&\cdot \Big( \det(BB^t) + (\bD-1) \sum_{i=1}^{n-1} \det \big(
B_{\overline i}B_{\overline i}^t\big) + 2^{n}(\sigma \sqrt{n})^{n-1}
\\ & + \ \sum _{\ell =1}^{n-2}2^{\ell+1}(\sigma \sqrt{n})^{\ell}   \Big(\sum
_{\# (S)=\ell} \det(B_{\overline S}(B_{\overline S})^t)+
(\bD-1)(\ell+1)\sum _{\# (T)=\ell+1}  \det \big(
B_{\overline{T}}(B_{\overline{T}})^t\big)\Big)\Big) \cdot \frac{e^{-u/2}}{\sqrt u} p_{\zeta}(z)dz\\
&\leq \sigma _V (V)\int_{(S\times \R)^n} \frac{\sqrt
2}{(4\pi)^n}\big(\frac{1}{2}\|\tilde{z}_{11}\|+\|\tilde{f}\|
\big)2^{n-1}\frac{\sqrt \bD}{\sqrt \mathcal{D}}\cdot\\
&\cdot \Big( \frac{\mathcal{D}}{2^{2n-1}}\det(\hat{B}\hat{B}^t)   +
(\bD-1) \sum_{i=1}^{n-1}\frac{\mathcal{D}}{2^{2n-2}}
\det(\hat{B_{\overline i}}(\hat{B}_{\overline i})^t)
 + \mathcal{D}(\sigma
\sqrt{n})^{n-1}
\\ & + \ \sum _{\ell =1}^{n-2}2^{\ell+1}(\sigma \sqrt{n})^{\ell}  \Big(\sum
_{\# (S)=\ell} \frac{\mathcal{D}}{2^{2n-1-\ell}}\det(\hat
B_{\overline S}(\hat B_{\overline S})^t)+ (\bD-1)(\ell+1)\sum _{\#
(T)=\ell+1}  \frac{\mathcal{D}}{2^{2n-\ell}}\det \big(
\hat B_{\overline{T}}(\hat B_{\overline{T}})^t\big)\Big)\Big) \cdot\\ & \cdot \frac{e^{-u/2}}{\sqrt u} p_{\zeta}(z)dz\\
&\leq \sigma _V (V)\int_{(S\times \R)^n} \frac{\sqrt
2}{(8\pi)^n}\big(\frac{1}{2}\|\tilde{z}_{11}\|+\|\tilde{f}\|
\big)\sqrt {\bD\mathcal{D}}\cdot \Big( \det(\hat{B}\hat{B}^t) + 2(\bD-1)
\sum_{i=1}^{n-1}\det(\hat{B_{\overline i}}(\hat{B}_{\overline i})^t)
 + 2(4\sigma
\sqrt{n})^{n-1}
\\ & + \ \sum _{\ell =1}^{n-2}(4\sigma \sqrt{n})^{\ell}  \Big(\sum
_{\# (S)=\ell}2 \det(\hat B_{\overline S}(\hat B_{\overline S})^t)+
(\bD-1)(\ell+1)\sum _{\# (T)=\ell+1}\det \big( \hat
B_{\overline{T}}(\hat B_{\overline{T}})^t\big)\Big)\Big)
\cdot\frac{e^{-u/2}}{\sqrt u}
p_{\zeta}(z)dz\\[2mm]
&= \E\big(H(u,\zeta)\big),
\endaligned
\end{equation*}
where

\begin{equation*}
\aligned H(u,\zeta):&=\frac{\sqrt 2}{(8\pi)^n}\sigma _V (V)
\big(\frac{1}{2}\|\tilde{\zeta}_{11}\|+\|\tilde{f}\| \big)\sqrt
{\bD\mathcal{D}}\cdot \Big( \det(\hat{B}({\zeta})\hat{B}^t({\zeta})) +
 2(4\sigma \sqrt{n})^{n-1}\\
& + 2(\bD-1) \sum_{i=1}^{n-1}\det(\hat B_{\overline
i}({\zeta})(\hat{B}_{\overline i})^t({\zeta}))
 + \ \sum _{\ell =1}^{n-2}(4\sigma \sqrt{n})^{\ell}  \Big(\sum
_{\# (S)=\ell} 2\det(\hat B_{\overline S}({\zeta})(\hat B_{\overline
S})^t({\zeta})) \\ &+ (\bD-1)(\ell+1)\sum _{\# (T)=\ell+1}\det \big(
\hat B_{\overline{T}}({\zeta})(\hat
B_{\overline{T}}({\zeta}))^t\big)\Big)\Big)
\cdot\frac{e^{-u/2}}{\sqrt u}.
\endaligned
\end{equation*}
Here
$$
     \hat B(\zeta):=\stackrel{\longleftarrow\quad \bajo n\quad \longrightarrow}
        {\left(\begin{array}{ccc}
        \frac{2}{\sqrt{d_1}}\partial_2 f_1 &\dots &
        \frac{2}{\sqrt{d_n}}\partial_2 f_n \\
        \vdots& &\vdots \\
        \frac{2}{\sqrt{d_1}}\partial_n f_1 &\dots &
        \frac{2}{\sqrt{d_n}}\partial_n f_n
        \end{array}\right)}\begin{array}{c}\uparrow\\ {\scriptstyle{n-1}}\\\downarrow \end{array}
        \quad  \mbox{and} \quad
 \tilde{\zeta}_{11}:=\big( \frac{2}{\sqrt{d_1}}
\partial_{11}f_1, \dots,  \frac{2}{\sqrt{d_n}}
\partial_{11}f_n\big).
$$
Our next goal is then to bound $\E(H(u,\zeta))$. We first note that
the matrix $\hat{B}(\zeta)$ is  independent from $\tilde{\zeta}_{11}
$, so that the expectation
can be factorized as a product of expectations. \\

\noindent First,  using Lemma \ref{lemcovar} and the definition of
$\tilde{f}$ we easily get
$$
\E \big( \frac{1}{2}\|\tilde{\zeta}_{11}\|+\|\tilde{f}\| \big) \leq
\sqrt{2(\bD-1)n}+\sqrt{\bD u}.
$$
For the other expectations we   apply the following.

\begin{lemma}(e.g.~\cite[Lemma~13.6]{AW2})
Set $m\le n$ and let $U$ be an $m\times n$
random matrix whose elements are independent
real standard normal. Then
\begin{equation}\tag*{\qed}
\E \big( \det (UU^t) \big)=\frac{n!}{(n-m)!}.
\end{equation}
\end{lemma}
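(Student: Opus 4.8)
The plan is to compute $\E\big(\det(UU^t)\big)$ via the classical base-times-height factorization of a Gram determinant together with iterated conditioning. Write $u_1,\dots,u_m\in\R^n$ for the rows of $U$. The Gram--Schmidt computation of the $m$-volume of the parallelotope spanned by these rows gives
\[
   \det(UU^t)=\prod_{k=1}^m h_k^2, \qquad
   h_k:=\dist\big(u_k,\mathrm{span}(u_1,\dots,u_{k-1})\big)
\]
(with the convention $h_1=\|u_1\|$). This is exactly the type of volume-as-distance-times-volume argument already used several times above (e.g.\ for $\det(M_2)$ and for $|\det(A)|$).

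Next I would evaluate the expectation by peeling off the factors $h_k^2$ one at a time. Condition on $u_1,\dots,u_{k-1}$. Since $k-1<m\le n$, these vectors almost surely span a subspace of dimension exactly $k-1$ (the Lebesgue-almost-sure rank statement), so its orthogonal complement $E_k\subset\R^n$ has dimension $n-k+1$. As $u_k$ is a standard Gaussian vector in $\R^n$ independent of $u_1,\dots,u_{k-1}$, and the isotropic Gaussian law is invariant under orthogonal transformations, the orthogonal projection $\mathrm{proj}_{E_k}(u_k)$ is, conditionally on $u_1,\dots,u_{k-1}$, a standard Gaussian vector in $E_k$. Hence $h_k^2=\|\mathrm{proj}_{E_k}(u_k)\|^2$ has conditionally the $\chi^2_{n-k+1}$ distribution, and in particular $\E\big(h_k^2\mid u_1,\dots,u_{k-1}\big)=n-k+1$ almost surely.

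Finally, applying the tower property repeatedly---first taking the conditional expectation in $h_m^2$ given $u_1,\dots,u_{m-1}$, then in $h_{m-1}^2$ given $u_1,\dots,u_{m-2}$, and so on---yields
\[
   \E\big(\det(UU^t)\big)=\prod_{k=1}^m (n-k+1)=n(n-1)\cdots(n-m+1)=\frac{n!}{(n-m)!}.
\]
No step presents a genuine obstacle; the only point that needs a word of care is the almost-sure rank claim pinning down $\dim E_k$, together with the observation that, by rotational invariance of the isotropic Gaussian, the conditional law of $\mathrm{proj}_{E_k}(u_k)$ really is standard Gaussian on $E_k$ (so that the $\chi^2$ computation is legitimate). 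Alternatively one may invoke Cauchy--Binet directly: $\det(UU^t)=\sum_{\#S=m}\det(U^S)^2$ is a sum of $\binom{n}{m}$ squared determinants of $m\times m$ matrices of independent standard normals, each of expectation $m!$ (the case $m=n$, which is the above argument with $E_k$ one-dimensional fewer at each step), giving $\binom{n}{m}\,m!=\tfrac{n!}{(n-m)!}$.
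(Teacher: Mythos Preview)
Your argument is correct. Note, however, that the paper does not actually prove this lemma: it is stated with a citation to \cite[Lemma~13.6]{AW2} and closed immediately with a \qed, so there is no in-paper proof to compare against. Both of the routes you outline---the Gram--Schmidt/base-times-height factorization combined with iterated conditioning (yielding the product of $\chi^2$ means $n-k+1$), and the Cauchy--Binet expansion reducing to $\binom{n}{m}$ copies of the square case---are standard and complete; either one would serve as a self-contained proof here.
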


\noindent Therefore, since by Lemma~\ref{lemcovar},
$\frac{1}{2}\hat{B}(\zeta)$ satisfies the hypothesis of the lemma
with $m=n-1$, we obtain
$$
\E \big( \det(\hat{B}(\zeta)\hat{B}^t(\zeta) \big)=4^{n-1}n!
$$
and we get similar expressions for the other determinants in
$\E(H(u,\zeta)))$: \begin{equation*}\aligned & \E\big(\det(\hat
B_{\overline i}({\zeta})(\hat{B}_{\overline i})^t({\zeta}))\big)=
4^{n-2}\frac{n!}{2},\\ &\E \big( \det (\hat{B}_{\overline
S}(\zeta)\hat{B}^t_{\overline S}(\zeta))
\big)=4^{n-1-\ell}\frac{n!}{(\ell+1)!},\\&\E\big( \det \big( \hat
B_{\overline{T}}({\zeta})(\hat
B_{\overline{T}}({\zeta}))^t\big)=4^{n-2-\ell}\frac{n!}{(\ell+2)!}.
\endaligned\end{equation*}
We also apply  Formula~(\ref{volumenV}): $\sigma _V(V)=4\sqrt 2
\pi^{n+\frac{1}{2}}/\big(\Gamma(n/2)\Gamma((n+1)/2)\big)$. Therefore

\begin{equation}\label{expH}
\aligned \E \big(& H(u,\zeta) \big) = \frac{\sqrt 2}{(8\pi)^n}
\frac{4\sqrt 2 \pi^{n+\frac{1}{2}}}{\Gamma(n/2)\Gamma((n+1)/2)}
 \big(\sqrt{2(\bD-1)n}+\sqrt{\bD u}\big)\sqrt{\bD\mathcal{D}} \\
&\cdot \Bigg(\,4^{n-1}n! + 2(4\sigma \sqrt n)^{n-1}  +
2(\bD-1)\sum_{i=1}^{n-1}4^{n-2}\frac{n!}{2} \\ & + \sum_{\ell
=1}^{n-2}(4\sigma\sqrt{n})^{\ell}\Big({n-1\choose
\ell}2\cdot4^{n-1-\ell}\frac{n!}{(\ell+1)!}+
(\bD-1)(\ell+1){n-1\choose \ell+1} 4^{n-2-\ell}\frac{n!}{(\ell+2)!}
\Big)\Bigg)\\
&\cdot \frac{e^{-u/2}}{\sqrt{u}}\\
&= \ \frac{\sqrt{\pi}}{8^{n-1}\Gamma (n/2)\Gamma
((n+1)/2)} \big(\sqrt{2(\bD-1)n}+\sqrt{\bD u}\big)\sqrt{\bD\mathcal{D}}\,4^{n-1}n!\\
&\ \cdot \Bigg( 1+ 2\frac{(\sigma \sqrt n)^{n-1}}{n!}
+\frac{(\bD-1)(n-1)}{4} + \sum_{\ell
=1}^{n-2}(\sigma\sqrt{n})^{\ell}\Big({n-1\choose
\ell}\frac{2}{(\ell+1)!}\\
&\ +\frac{(\bD-1)(\ell+1) }{4}{n-1\choose \ell+1}
\frac{1}{(\ell+2)!}
\Big)\Bigg) \cdot \frac{e^{-u/2}}{\sqrt{u}}\\
&\le \ \frac{\sqrt{\pi}}{2^{n-1}\Gamma (n/2)\Gamma
((n+1)/2)} \big(\sqrt{2(\bD-1)n}+\sqrt{\bD u}\big)\sqrt{\bD\mathcal{D}}\,n!\\
&\ \cdot \left(  \sum_{\ell =0}^{n-1}{n-1\choose
\ell}(\sigma\sqrt{n})^{\ell}+\frac{(\bD-1)(n-1)
}{4}\sum_{\ell=0}^{n-2} {n-2\choose \ell}(\sigma \sqrt n)^\ell
\right)\, \cdot \,\frac{e^{-u/2}}{\sqrt{u}} .
\endaligned
\end{equation}

\noindent Now, we assume $n\ge 3$ and we bound this expectation  for
$ 0 < u < 1/(4\bD^2 n^5) $ in which case, by the bound for $\sigma^2$
given in~(\ref{varw}), $\sigma^2\le  4\bD(\bD-1)u \le 1/n^5$.\\ We will
use throughout the bounds $1+x\le e^x$ for any $x$ and $e^x-1\le 2x $
for $0\le x\le 1$.

\smallskip
\noindent The factorial term $n! = \Gamma(n+1)$ and the other Gamma functions
in the first line of
the right-hand side of Inequality~(\ref{expH}) can be bounded
through Stirling's formula
\cite[Formula~6.1.38]{AbSt}: for any $x>0$,
\[
\Gamma(x+1)=
\sqrt{2\pi x} \Big(\frac{x}{e}\Big)^x e^{\theta/(12x)}
 \quad
\mbox{for some $0 < \theta = \theta(x) < 1$.}
\]
so that,
\[
\sqrt{2\pi x} \Big(\frac{x}{e}\Big)^x
<
\Gamma(x+1)
<
\sqrt{2\pi x} \Big(\frac{x}{e}\Big)^x e^{1/(12x)}
.
\]
Also,
$$\sqrt{\bD u}\le \frac{1}{2\sqrt \bD n^{5/2}}\le \sqrt{2(\bD-1)n}\Big(\frac{1}{2\sqrt{2(\bD-1)n}\,\sqrt \bD n^{5/2}} \Big)\le
\frac{\sqrt{2(\bD-1)n} } {4n^3 }, $$ which implies
$$
   \sqrt{2(\bD-1)n}+\sqrt{\bD u}\le  \sqrt{2(\bD-1)n}\,
   \Big(1+ \frac{1}{4n^3}\Big).
$$
Therefore, the first line of the right-hand side
of Inequality~(\ref{expH}) satisfies
\begin{equation*}
\aligned &\frac{\sqrt{\pi}}{2^{n-1}\Gamma
(n/2)\Gamma ((n+1)/2)}\big(\sqrt{2(\bD-1)n}+\sqrt{\bD u}\big)
\sqrt{\bD\mathcal{D}}\,n!\\
& \le \ \frac{\sqrt{\pi}}{2^{n-1}\sqrt{(n-2)\pi}\sqrt{(n-1)\pi}}
    \Big(\frac{2e}{n-2}\Big)^{\frac{n-2}{2}}
    \Big(\frac{2e}{n-1}\Big)^{\frac{n-1}{2}}
    \sqrt{2(\bD-1)n}\,\Big(1+ \frac{1}{4n^3}\Big)\\
&\qquad \sqrt{\bD\mathcal{D}}\sqrt{2\pi n}
\Big(\frac{n}{e}\Big)^n e^{1/(12n)}\\
 & = \ \frac{e^{1/(12n)}}{
 e^{3/2}}\sqrt{\frac{n^2}{(n-2)(n-1)}}\Big(\frac{n}{n-2}\Big)^{\frac{n-2}{2}}
 \Big(\frac{n}{n-1}\Big)^{\frac{n-1}{2}}n^{3/2}
 \sqrt{2(\bD-1)}\,\Big(1+ \frac{1}{4n^3}\Big)\sqrt{\bD\mathcal{D}}
 \\
& \le \ 3\, \bD \,\sqrt{\mathcal{D}}\,n^{3/2}\,
\,\Big(1+\frac{1}{4n^3}\Big)\,\Big(1 + \frac{1}{6n}\Big) \ \le 4 \,\bD
\,\sqrt{\mathcal{D}}\,n^{3/2}.
\endaligned
\end{equation*}
We now turn our attention to the  term under brackets in the right-hand side of
Inequality~(\ref{expH}).

\smallskip \noindent
We have $\sigma\sqrt n \le 1/n^2$. Therefore \begin{equation*}
\aligned &
 \sum_{\ell =0}^{n-1}{n-1\choose \ell}(\sigma\sqrt{n})^{\ell}
  +\frac{(\bD-1)(n-1)}{4}
  \sum_{\ell=0}^{n-2} {n-2\choose \ell}(\sigma \sqrt n)^\ell\\
  &\le \ \Big(1+\frac{1}{n^2}\Big)^{n-1} + \frac{(\bD-1)(n-1) }{4}
  \Big(1+\frac{1}{n^2}\Big)^{n-2}\\
& \le \ e^{\frac{n-2}{n^2}}\Big( 1+\frac{1}{n^2} +  \frac{(\bD-1)(n-1)
}{4}\Big) \ \le \ \Big(1+
\frac{2(n-2)}{n^2}\Big)\,\Big(1+\frac{1}{n^2} + \frac{(\bD-1)(n-1)
}{4}\Big) \ \le n\,\bD.
\endaligned\end{equation*}
Adding up, since   $e^{-u/2}\le 1$,  we obtain
\begin{equation*}\label{cotafinp}
p_{\underline{L}}(u) \ \leq \ \E\big(H(u,\zeta)\big) \ \le \
4\,\bD^2\, {\mathcal{D}}^{1/2}\,n^{5/2} \,  \frac{1}{\sqrt{u}}.
\end{equation*}

\bigskip
\noindent\textbf{Step 7}. We finally complete the proof of
Theorem~\ref{boundkapatilde}.\\

\noindent For $0<\alpha <1/(4\bD^2n^5)$, the previous estimate for
$p_{\underline{L}}(u)$ implies

$$
\P(\underline{L}<\alpha)=\int_0^{\alpha}p_{\underline{L}}(u)\, du\
\leq 8\,\bD^2\, {\mathcal{D}}^{1/2}\,n^{5/2} \,\sqrt\alpha.
$$
Let us go back to the starting inequality~(\ref{des1}):
$$
   \P \left(\tilde\kappa (f)>a \right)\leq \P
   \left(\underline{L}<\frac{1}{a^2} (1+\ln a )N \right)+
   \exp\left(-\frac{N}{2}(\ln a - \ln (\ln a +1)\right).
$$
 where we recall that $$N  = \sum_{i=1}^n{n+d_i\choose n}\ \le \
n^{\bD+2}.$$ By hypothesis in the theorem, $a>a_n:=4\,\bD^2n^3N^{1/2}$. \\
We set $\alpha:=(1+\ln a)N/a^2 $ and verify $\alpha <1/(4\bD^2n^5)$.
It is enough to verify it with $a_n$:
$$\frac{(1+\ln a_n)N}{a_n^2} < \frac{1}{4\bD^2n^5} \iff
1+\ln a_n \le
4\bD^2n$$ which is satisfied since for $\bD\ge 2$ and $n\ge 3$,
$$1+\ln a_n<1+\ln(4\bD^2)+3\ln n + \frac{\bD+2}{2}\ln n \le 4\bD^2 +
\big(\frac{\bD}{2}+4\big)\ln n< 4\bD^2+4\bD^2(n-1)=4\bD^2n.
$$
Therefore, by Inequality~(\ref{des1}),

\begin{equation*}
\aligned \P (\tilde{\kappa} (f))>a )&\leq \ \P \Big( \underline{L}<
\frac{(1+\ln a)N}{a^2}\Big)+\exp \Big( -\frac{N}{2} \big(\ln a - \ln (\ln a +1)\big) \Big)\\
&\leq \ 8\,\bD^2\, {\mathcal{D}}^{1/2}\,n^{5/2}
\,\sqrt\alpha+\frac{1}{a}\ =\ K_n\frac{(1+\ln a)^{1/2}}{a}
\endaligned
\end{equation*}
where $ K_n=8\bD^2{\mathcal{D}}^{1/2}\,{N}^{1/2}n^{5/2}+1.$ Here we
used $ \exp \big( (-N/2) \big(\ln a - \ln (\ln a
+1)\big)\big)<1/a$ for $a>2, N>10$.
This proves part (i) of Theorem \ref{boundkapatilde}.\\

\noindent (ii) We verify that  $K_n> a_n$. It is enough to check
$$8\,\bD^2{\mathcal{D}}^{1/2}{N}^{1/2}n^{5/2}\ge 4\,\bD^2n^3N^{1/2}
\iff 2 \,{\mathcal{D}}^{1/2}\ge  n^{1/2} \iff 4\,{\mathcal{D}}\ge
n$$ which holds because $4\,{\mathcal{D}}\ge 4\cdot 2^n\ge  n$.\\
Therefore we can write
\begin{equation*}
\aligned \E (\ln \tilde{\kappa} (f))&= \ \int_0^{+\infty}\P (\ln
\tilde{\kappa} (f)>x )\,dx \ \leq \
\ln K_n +\int_{\ln K_n}^{+\infty}\P \big(\tilde{\kappa} (f)>e^x \big)\,dx\\
&\leq \ \ln K_n +\int_{\ln K_n}^{+\infty}K_n(1+x)^{1/2}e^{-x}\, dx\\
&\leq \ \ln K_n +K_n\int_{\ln K_n}^{+\infty}x^{1/2}e^{-x}\, dx+\frac{K_n}{2}\int_{\ln K_n}^{+\infty}x^{-1/2}e^{-x}\,dx\\
&= \ \ln K_n + K_n(e^{-\ln K_n}\,(\ln K_n)^{1/2})+ K_n\,\int_{\ln K_n}^{+\infty}x^{-1/2}e^{-x}\,dx\\
&\le \  \ln K_n +(\ln K_n)^{1/2}+K_n(\ln K_n)^{-1/2}\,\int_{\ln K_n}^{+\infty}e^{-x}\,dx\\
&= \ \ln K_n +(\ln K_n)^{1/2}+ (\ln K_n)^{-1/2}.
\endaligned
\end{equation*}
Here we used the inequality
$(1+x)^{1/2}<x^{1/2}+\frac{1}{2}x^{-1/2}$  for $x>0$ and integration
by parts.

\bigskip

\section{Auxiliary lemmas}\label{auxlemmas}
This section contains the proofs of all the auxiliary results indicated by the symbol $\diamondsuit$, which were  stated without proof during the text.

\medskip
\proofof{Lemma~\ref{largedev}}  According to the definition of the
Weyl norm,
\begin{equation}\label{weylchi2}
   \|f\|_W^2=\sum_{i=1}^n\sum _{ |j |=d_i}\xi_{i,j}^2
\end{equation}
where, due to the distribution, the random variables
$$
   \xi_{i,j}=\frac{a_j^{(i)}}{{d_i \choose j }^{1/2}}
$$
are independent identically distributed (i.i.d.) standard normal.\\
It is easy to see that the number of terms in the sum
(\ref{weylchi2}) is equal to $N $, so that
$$
   \P \left(\|f\|_W^2 \geq (1+\eta)N \right)
  =\P\left((\xi_1^2-1)+\cdots
   +(\xi_{N}^2-1)\ge \eta N \right)
  =\P \left(\frac{X_1+\ldots+X_{N}}{N}\geq \eta \right)
$$
where $X_{1},\ldots ,X_{N}$ are i.i.d. random variables having the
distribution of $\xi ^{2}-1$, $\xi $ a normal standard random
variable.
\\
The logarithmic moment generating function of $\xi ^{2}-1$ is
\begin{equation*}
   \Lambda (\lambda )=\ln \E \{e^{\lambda (\xi ^{2}-1)}\}
  =\left\{
   \begin{array}{ll}
     -\lambda -\frac{1}{2}\ln (1-2\lambda )
         & \mbox{if $\lambda <\frac{1}{2}$} \\[4pt]
    +\infty & \mbox{if $\lambda \geq \frac{1}{2}$}
\end{array}
\right.
\end{equation*}
and its Fenchel-Legendre transform
\begin{equation*}
   \Lambda ^{\ast }(x)
 =\sup_{\lambda \in \R}(\lambda x-\Lambda (\lambda))
 =\left\{
      \begin{array}{ll}
       \frac{1}{2}(x-\ln (x+1)) & \mbox{if $x>-1$} \\
       +\infty & \mbox{if $x\leq -1$.}
     \end{array}\right.
\end{equation*}
A basic result on large deviations \cite[Ch.~2]{DemboZeitouni}
states that, for any integer $m$ and any $x>0$,
\begin{equation*}
  \P\left( \frac{X_{1}+\cdots +X_{m}}{m}\geq x\right) \leq
  \exp({-m\Lambda ^{\ast }(x)}).
\end{equation*}
This implies the statement.
\eproof
\bigskip

\noindent

\proofof{Lemma~\ref{lemcovar}}
For the first item, from the fact that
$\E(a_ja_{j'})=\E(a_j)\E(a_{j'})=0$ for $j\ne j'$
(by  the independence of the $a_j$), we have
$$
    \E (f(x)f(y))=\E \left(\sum_{j,j'} a_j a_{j'} x^j y^{j'}\right)
    =\sum_j\E((a_j)^2) x^jy^j
    =\sum_j{d\choose j} x^jy^j=\langle x,y\rangle^{d}.
$$
For the following items, we observe that we can differentiate
under the expectation sign
the function $(x,y)\mapsto \E(f(x)f(y))=\langle x,y\rangle^d$, e.g.
$$
   \aligned
  &\E\left(f(x)\partial_k f(y)\right)
  =\frac{\partial (\langle x,y\rangle^d)}{\partial y_k}(x,y)
  = dx_k\langle x,y\rangle^{d-1}\\
  &\E\left(\partial_k f(x)
     \partial_{k'}f(y)\right)
   =\partial_{kk'}^2 (\langle x,y\rangle^d)
  = \delta_{kk'}d\langle x,y\rangle^{d-1}+d(d-1)x_{k'}y_k
       \langle x,y\rangle^{d-2}.
\endaligned
$$
This gives the covariances when specializing $x=y=e_0$.
\eproof

%
\bigskip

\noindent Our next lemma deals with the analytic description of the
geometry of the manifold $V$ which is used in the proof of Lemma~\ref{lemmaM}. We define the function
$\psi:B_{2n-1,\delta}  \rightarrow \R^{n+1}\times\R^{n+1}$ by means of:
$$
\psi(\sigma_2,\ldots,\sigma _n,\tau _2,\ldots,\tau _n, \theta)=\left(\frac{C}{\|C\|_{n+1}},
   \frac{D}{\|D\|_{n+1}}\right),
$$
where $B_{2n-1,\delta} $ is the open ball in $\R^{2n-1}$, centered at
the origin and radius
$\delta$ sufficiently small, $\|.\|_{n+1}$ is the Euclidean norm in
$\R^{n+1}$ and the definition of $C$ and $D$ is given in
several steps by the following:
\begin{itemize}
\item
We set $\sigma_1:=(1-\sigma_2^2-\ldots-\sigma _n^2)^{1/2},
~\tau_1:=(1-\tau_2^2-\ldots-\tau _n^2)^{1/2}$,\\[2mm]
$a(\sigma,\tau):=-\left( \sum_{j=2}^n\sigma _j \tau _j \right)/(\sigma
_1 + \tau _1)$, $~~n(\sigma, \tau ):=\sqrt{1+a^2(\sigma ,\tau)}$.
\item
$A:=\frac{1}{n(\sigma ,\tau )}\left( \sigma _1e_0+\sum_{j=2}^n
  \sigma _je_j+a(\sigma ,\tau)e_1\right)$, and

$B:=\frac{1}{n(\sigma ,\tau )}\left( \tau_1e_1+\sum_{j=2}^n
  \tau _je_j+a(\sigma ,\tau) e_0\right)$.

\item
$C:=\cos(\theta/\sqrt{2})A+ \sin(\theta/\sqrt{2})\sigma _1e_1$,
and $D:=\cos(\theta/\sqrt{2})B- \sin(\theta/\sqrt{2})\tau _1e_0$.
\end{itemize}

\begin{lemma}\label{curvaturas}\textbf{[Geometry of $V$]}
\begin{enumerate}
\item
$\psi$ is a parametrization of a neighborhood of the point
$(e_0,e_1)$ in the manifold $V$ with $\psi (0)=(e_0,e_1)$.
\item
For $2\le j\le n$,
$$
\frac{\partial \psi}{\partial \sigma_{j}}(0)=(e_{j},0),~~~
  \frac{\partial \psi}{\partial \tau_{j}}(0)=(0,e_{j})\
  \mbox{ and } \  \frac{\partial \psi}{\partial\theta }(0)
 =\frac{1}{\sqrt{2}}(e_1,-e_0).
$$

Therefore  the orthonormal basis $\mathcal{B}_T$ (defined in
\eqref{basee0e1}) of the tangent space of $V $ at the point
$(e_0,e_1)$ satisfies
$$
  \mathcal{B}_T=\left(\frac{\partial\psi}{\partial \sigma _{2}}(0),
   \dots,\frac{\partial \psi}{\partial\sigma _{n}}(0),\frac{\partial \psi}
   {\partial \tau_{2}}(0),\dots,\frac{\partial \psi}{\partial \tau _{n}}(0),
   \frac{\partial \psi}{\partial \theta }(0)\right).
$$
\item The curvatures are given by:
\begin{equation*}\aligned
  &\frac{\partial^2\psi}{\partial\sigma _{j}^2}(0)=(-e_0,0);\
  \frac{\partial^2\psi}{\partial\tau _{j}^2}=(0,-e_1);\
  \frac{\partial^2\psi}{\partial\sigma_{j}\partial\tau _{j}}
 =-\frac{1}{2}(e_1,e_0) \quad \mbox{for} \  2\le j\le n,\\
 &\frac{\partial^2\psi}{\partial\sigma _{j}\partial\sigma_{k}}(0)
 =~\frac{\partial^2\psi}{\partial\tau _{j}\partial\tau_{k}}(0)
 =~\frac{\partial^2\psi}{\partial\sigma _{j}\partial\tau_{k}}(0)
 =(0,0)\quad \mbox{for} \ 2\le j\ne k\le n,\\
 &\frac{\partial^2\psi}{\partial \theta^2}(0)
 =-\frac{1}{2}(e_0,e_1);\ \frac{\partial^2\psi}
   {\partial \sigma_{j}\partial \theta}(0)
 =\frac{\partial^2\psi}{\partial \tau_{j}\partial \theta}
 =(0,0) \quad \mbox{for} \  2\le j\le n.
\endaligned
\end{equation*}
\end{enumerate}
\end{lemma}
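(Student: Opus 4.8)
The plan is a direct computation of the first two derivatives of $\psi$ at the origin, set up so that, by the way $\psi$ is built, almost every term drops out at $0$. First I would substitute $\sigma=\tau=\theta=0$: then $\sigma_1=\tau_1=1$, $a=0$, $n=1$, hence $A=e_0$, $B=e_1$, $C=e_0$, $D=e_1$, so $\psi(0)=(e_0,e_1)$. For the range, $\|A\|^2=n^{-2}(\sigma_1^2+\sum_{j\ge2}\sigma_j^2+a^2)=n^{-2}(1+a^2)=1$ and likewise $\|B\|=1$, while
$$
\langle A,B\rangle=n^{-2}\Big(a(\sigma_1+\tau_1)+\sum_{j\ge2}\sigma_j\tau_j\Big)=0
$$
precisely by the definition of $a$; so $\{A,B\}$ is orthonormal. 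Writing $c=\cos(\theta/\sqrt2)$, $s=\sin(\theta/\sqrt2)$ and using $\langle A,e_0\rangle=\sigma_1/n$, $\langle B,e_1\rangle=\tau_1/n$, a two-line computation gives $\langle C,D\rangle=cs(-\tau_1\langle A,e_0\rangle+\sigma_1\langle B,e_1\rangle)=0$; since $\|C\|,\|D\|$ stay close to $1$ for $\delta$ small, $\psi=(C/\|C\|,D/\|D\|)$ maps $B_{2n-1,\delta}$ into $V$ and is $\mathcal C^\infty$ there (the only denominators, $\sigma_1+\tau_1$, $\|C\|$, $\|D\|$, stay near $2$, $1$, $1$).

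\textbf{Step B: first derivatives and item (1).} The crucial remark is that $a$ vanishes to first order at $0$: writing $a=-N/M$ with $N=\sum_{j\ge2}\sigma_j\tau_j$, $M=\sigma_1+\tau_1$, at the origin $N=0$ and every first partial of $N$ is $0$, so all first partials of $a$, and hence of $n=\sqrt{1+a^2}$, vanish at $0$; also $\nabla\sigma_1(0)=\nabla\tau_1(0)=0$. Thus $\partial_t A(0)$ reduces to the $\partial_t$ of $\sigma_1 e_0+\sum_{j\ge2}\sigma_j e_j+a e_1$ at $0$, giving $\partial_{\sigma_k}A(0)=e_k$, $\partial_{\tau_k}A(0)=0$, $\partial_\theta A=0$, and by the $\sigma\leftrightarrow\tau$, $e_0\leftrightarrow e_1$ symmetry the mirror formulas for $B$. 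With $c(0)=1$, $c'(0)=0$, $s(0)=0$, $s'(0)=1/\sqrt2$ this yields $\partial_{\sigma_k}C(0)=e_k$, $\partial_{\tau_k}C(0)=0$, $\partial_\theta C(0)=\tfrac1{\sqrt2}e_1$ and analogously for $D$ (with $\partial_\theta D(0)=-\tfrac1{\sqrt2}e_0$). Since $\|C(0)\|=1$, $\partial_t(C/\|C\|)(0)$ equals $\partial_t C(0)-\langle e_0,\partial_t C(0)\rangle e_0$, the component orthogonal to $e_0$, and likewise for $D$; as $e_k\perp e_0$ and $e_k\perp e_1$ for $k\ge2$, one obtains exactly $\partial_{\sigma_j}\psi(0)=(e_j,0)$, $\partial_{\tau_j}\psi(0)=(0,e_j)$, $\partial_\theta\psi(0)=\tfrac1{\sqrt2}(e_1,-e_0)$. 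These $2n-1$ vectors are orthonormal and orthogonal to the normal frame $(e_0,0),(0,e_1),\tfrac1{\sqrt2}(e_1,e_0)$ of $V$ at $(e_0,e_1)$, so $d\psi(0)$ is an isomorphism onto $T_{(e_0,e_1)}V$; since $\dim B_{2n-1,\delta}=\dim V=2n-1$, $\psi$ restricted to a small enough ball is a diffeomorphism onto an open subset of $V$. This proves item (1), and item (2) is then the displayed list together with~(\ref{basee0e1}).

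\textbf{Step C: curvatures (item (3)).} I would push the same bookkeeping to second order. All second partials of $n$ still vanish at $0$ (since $a(0)=\nabla a(0)=0$), the Hessians of $\sigma_1$ and $\tau_1$ at $0$ equal $-I_{n-1}$, and from $a=-N/M$ one gets at $0$ that $\partial^2_{\sigma_j\sigma_k}a=\partial^2_{\tau_j\tau_k}a=0$ while $\partial^2_{\sigma_j\tau_k}a(0)=-\tfrac1M\partial^2_{\sigma_j\tau_k}N(0)=-\tfrac12\delta_{jk}$. Hence $\partial^2_{\sigma_j\sigma_k}A(0)=-\delta_{jk}e_0$, $\partial^2_{\tau_j\tau_k}A(0)=0$, $\partial^2_{\sigma_j\tau_k}A(0)=-\tfrac12\delta_{jk}e_1$ (mirror image for $B$), and with $c''(0)=-\tfrac12$, $s''(0)=0$ this produces the Hessians of $C$ and $D$ at $0$ — for instance $\partial^2_{\sigma_j}C(0)=-e_0$, $\partial^2_\theta C(0)=-\tfrac12 e_0$, $\partial_{\sigma_j}\partial_{\tau_j}C(0)=-\tfrac12 e_1$, $\partial_{\sigma_j}\partial_\theta C(0)=0$. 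Finally I would invoke the identity, valid for smooth $v$ with $\|v(0)\|=1$ and $u=v/\|v\|$,
$$
\partial_i\partial_j u=\partial_i\partial_j v-\langle v,\partial_j v\rangle\partial_i v-\langle v,\partial_i v\rangle\partial_j v-\big(\langle\partial_i v,\partial_j v\rangle+\langle v,\partial_i\partial_j v\rangle-3\langle v,\partial_i v\rangle\langle v,\partial_j v\rangle\big)v
$$
(all terms evaluated at $0$), obtained by differentiating $u=v\|v\|^{-1}$ twice; substituting the Hessians and first derivatives of $C$ and $D$ — where nearly every $\langle v,\partial_i v\rangle$ is $0$, because $\partial_i v\in\{e_j,\tfrac1{\sqrt2}e_1,0\}$ while $v=e_0$ (resp. $e_1$) — reproduces exactly the list of curvatures in item (3), the Kronecker deltas accounting for the vanishing of the mixed terms $\partial^2_{\sigma_j\sigma_k}\psi(0)$, $\partial^2_{\tau_j\tau_k}\psi(0)$, $\partial^2_{\sigma_j\tau_k}\psi(0)$ for $j\ne k$.

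\textbf{Main obstacle.} Conceptually nothing is hard here — it is all chain rule — and the only genuinely delicate point is deriving the second-order normalization identity above and applying it with the cross terms kept consistent. The chain-rule bookkeeping through $a$ and $n$, which looks heavy, is in fact tame: since $a$ vanishes to first order at the origin, the only second-order contributions that survive are the Hessians of $\sigma_1,\tau_1$ and the single nonzero mixed Hessian entry of $a$.
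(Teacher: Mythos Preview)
Your proof is correct and follows exactly the approach the paper indicates: the paper's own proof simply asserts that $\langle C,D\rangle=0$ is ``easy to check,'' that the first derivatives are a ``routine calculation'' (which also gives the local diffeomorphism via the rank of $d\psi(0)$), and that the second derivatives are ``immediate.'' You have carried out precisely these computations in full detail, including the key observations that $a$ vanishes to first order at the origin and that the normalization $v\mapsto v/\|v\|$ contributes via the second-order identity you wrote down; there is nothing to add or correct.
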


\proof If $\delta$ is small enough, $\psi$ is well defined
and is $\scC^{\infty}$. It is easy to check that
$\langle C,D\rangle _{\R^{n+1}}=0$, so that
$\psi (\sigma_2,\ldots,\sigma_n,\tau_2,\ldots,\tau _n, \theta)\in V$.
\\
A routine calculation of first derivatives allows to check {\em 2}
and also implies that if $\delta$ is small enough, $\psi $ is a
diffeomorphism from $B(0,\delta )$ onto its image. The computation
of second order derivatives is also immediate. \eproof

\begin{corollary}\label{matrizM}
Let us  set  $L':=L'(e_0,e_1)$ and $L'':=L'' (e_0,e_1)$
for the  free first order and second order
derivatives of $L$ at $(e_0,e_1)$. We use the parametrization introduced
in the previous Lemma. Consider the function
$$
\tilde{L}(\sigma_2,\ldots,\sigma _n,\tau _2,\ldots,\tau _n, \theta)=L\big(\psi(\sigma_2,\ldots,\sigma _n,\tau _2,\ldots,\tau _n, \theta) \big)
$$
Let $M$ be the symmetric matrix of the linear operator $\tilde{L}''(0)$ in the canonical basis of $\R^{2n-1}$:
$$
   M=\left(
    \begin{array}{ccc}
      M_{\sigma \sigma} & M_{\sigma \tau} & M_{\sigma \theta } \\
      M_{\tau \sigma} & M_{\tau \tau} & M_{\tau \theta} \\
      M_{\theta \sigma} & M_{\theta \tau} & M_{\theta \theta} \\
    \end{array}\right) \in \R^{(2n-1)\times(2n-1)}
$$
where for $2\le j,k\le n$,
\begin{eqnarray*}
  (M_{\sigma\sigma})_{jk} = (M_{\sigma\sigma})_{kj}
  &=&
  \frac{\partial^2(L\circ \psi )}{\partial \sigma _{j}
  \partial\sigma_{k}}
  = \left\langle L''\frac{\partial \psi}{\partial \sigma_{j}}(0),
    \frac{\partial \psi}{\partial \sigma _{k}}(0)\right\rangle +
    \left\langle L', \frac{\partial^2\psi}{\partial\sigma_{j}
      \partial\sigma _{k}}(0)\right\rangle\\
 &=& \left\{\begin{array}{lcl}\langle L''(e_j,0),(e_j,0)\rangle -
   \langle L', (e_0,0)\rangle &\mbox{for} & j=k\\
   \langle L''(e_j,0),(e_k,0)\rangle &\mbox{for} & j\ne k
     \end{array}\right. \\
 &=& \left\{\begin{array}{lcl}\frac{\partial^2L}
   {\partial x_{j}^2}-\frac{\partial L}{\partial x_0}
   &\mbox{for} & j=k\\[2mm]
   \frac{\partial^2L}{\partial x_{j}\partial x_{k}}
   &\mbox{for} & j\ne k\end{array}\right.
\end{eqnarray*}

\begin{eqnarray*}
  (M_{\sigma\tau})_{jk} = (M_{\tau\sigma})_{kj}
  &=&\frac{\partial^2(L\circ \psi )}
     {\partial \sigma _{j}\partial\tau_{k}}
  = \left\langle L''\frac{\partial \psi}{\partial \sigma_{j}}(0),
  \frac{\partial \psi}{\partial \tau_{k}}(0)\right\rangle
  + \left\langle L', \frac{\partial^2\psi}{\partial\sigma_{j}
       \partial\tau _{k}}(0)\right\rangle\\
  &=& \left\{\begin{array}{lcl}\langle L''(e_j,0),(0,e_j)\rangle
     -\frac{1}{2} \langle L', (e_1,e_0)\rangle
     &\mbox{for} & j=k\\
     \langle L''(e_j,0),(0,e_k)\rangle
     &\mbox{for} & j\ne k\end{array}\right.
\\
  &=& \left\{\begin{array}{lcl}\frac{\partial^2L}{\partial x_{j}
      \partial y_j}-\frac{1}{2}(\frac{\partial L}{\partial x_1}
      + \frac{\partial L}{\partial y_0})
      &\mbox{for} & j=k\\[2mm]
      \frac{\partial^2L}{\partial x_{j}\partial y_{k}}
      &\mbox{for} & j\ne k\end{array}\right.
\end{eqnarray*}

\begin{eqnarray*}
   (M_{\tau\tau})_{jk} = (M_{\tau\tau})_{kj}
   &=&\frac{\partial^2(L\circ \psi )}{\partial \tau _{j}
    \partial\tau_{k}}= \left\langle L''\frac{\partial \psi}
    {\partial \tau_{j}}(0),\frac{\partial \psi}{\partial \tau _{k}}(0)
    \right\rangle
  +\left\langle L', \frac{\partial^2\psi}
    {\partial\tau_{j}\partial\tau _{k}}(0)\right\rangle\\
  &=& \left\{\begin{array}{lcl}\langle L''(0,e_j),(0,e_j)\rangle -
         \langle L', (0,e_1)\rangle &\mbox{for} & j=k\\
        \langle L''(0,e_j),(0,e_k)\rangle
         &\mbox{for} & j\ne k\end{array}\right. \\
  &=& \left\{\begin{array}{lcl}\frac{\partial^2L}{\partial
           y_{j}^2}-\frac{\partial L}{\partial y_1}
          &\mbox{for} & j=k\\[2mm]
         \frac{\partial^2L}{\partial y_{j}\partial y_{k}}
         &\mbox{for} & j\ne k\end{array}\right.  ,
\end{eqnarray*}
for $2\le j\le n$,
\begin{eqnarray*}
  (M_{\sigma\theta})_{j1} = (M_{\theta\sigma})_{1j}
  &= &\frac{\partial^2(L\circ \psi )}{\partial\sigma _{j}
        \partial \theta}
    = \left\langle L''\frac{\partial \psi}{\partial \sigma_j}(0),
        \frac{\partial \psi}{\partial \theta}(0)\right\rangle +
       \left\langle L', \frac{\partial^2\psi}{\partial\sigma_{j}
       \partial\theta}(0)\right\rangle\\
  &=& \frac{1}{\sqrt 2}\langle L''(e_j,0),(e_1,-e_0)\rangle \ = \
         \frac{1}{\sqrt 2}\left(\frac{\partial^2L}{\partial x_{j}
         \partial x_1}-\frac{\partial^2L}{\partial x_{j}\partial y_0} \right),
\end{eqnarray*}

\begin{eqnarray*}
   (M_{\tau\theta})_{j1} = (M_{\theta\tau})_{1j}
   &= &\frac{\partial^2(L\circ \psi )}{\partial\tau _{j}\partial \theta}
     = \left\langle L''\frac{\partial \psi}{\partial \tau_j}(0),
        \frac{\partial \psi}{\partial \theta}(0)\right\rangle
     +\left\langle L', \frac{\partial^2\psi}{\partial\tau_{j}
        \partial\theta}(0)\right\rangle\\
  &=&\frac{1}{\sqrt 2}\langle L''(0,e_j),(e_1,-e_0)\rangle \
    = \ \frac{1}{\sqrt 2}\left(\frac{\partial^2L}
        {\partial y_{j}\partial x_1}-\frac{\partial^2L}
       {\partial y_{j}\partial y_0} \right),
\end{eqnarray*}
and finally
\begin{align}
     M_{\theta \theta}
    \;=\;&  \frac{\partial^2(L\circ \psi )}{\partial \theta^2}
    \;=\;\left\langle L''\frac{\partial \psi}{\partial \theta}(0),
     \frac{\partial \psi}{\partial \theta}(0)\right\rangle
    +\left\langle L', \frac{\partial^2\psi}{\partial\theta^2}(0)
       \right\rangle \notag\\
   =\;&\frac{1}{2}\left(\langle L''(e_1,-e_0),(e_1,-e_0)\rangle -
       \langle L', (e_0,e_1)\rangle \right)\notag\\
   =\;&\frac{1}{2}\left(\frac{\partial^2L}{\partial x_1^2}
            -2\frac{\partial^2 L}{\partial x_1\partial y_0}
           +\frac{\partial^2L}{\partial y_0^2}-\frac{\partial L}
           {\partial x_0}-\frac{\partial L}{\partial y_1}  \right).\tag*{\qed}
\end{align}
\end{corollary}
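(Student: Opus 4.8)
The plan is to obtain $M$ by a direct application of the second-order chain rule to $\tilde L=L\circ\psi$ at the origin, feeding in the geometric data of the chart $\psi$ provided by Lemma~\ref{curvaturas}.

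First I would record the chain rule for second derivatives: for the twice-differentiable function $L$ and the chart $\psi$ with $\psi(0)=(e_0,e_1)$, and for any two of the coordinates $\sigma_2,\dots,\sigma_n,\tau_2,\dots,\tau_n,\theta$ — call them $u_j,u_k$ — one has
\begin{equation*}
\frac{\partial^2(L\circ\psi)}{\partial u_j\partial u_k}(0)
=\Big\langle L''\,\frac{\partial\psi}{\partial u_j}(0),\,\frac{\partial\psi}{\partial u_k}(0)\Big\rangle
+\Big\langle L',\,\frac{\partial^2\psi}{\partial u_j\partial u_k}(0)\Big\rangle,
\end{equation*}
where $L'=L'(e_0,e_1)$ and $L''=L''(e_0,e_1)$ are the free (ambient) first and second derivatives of $L$ and $\langle\ ,\ \rangle$ is the inner product on $\R^{n+1}\times\R^{n+1}$. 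This is exactly the identity displayed, block by block, in the statement, so the work reduces to evaluating its right-hand side.

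Next I would substitute the first-order data from Lemma~\ref{curvaturas}(2), i.e. $\partial\psi/\partial\sigma_j(0)=(e_j,0)$, $\partial\psi/\partial\tau_j(0)=(0,e_j)$ and $\partial\psi/\partial\theta(0)=\tfrac1{\sqrt2}(e_1,-e_0)$, together with the curvatures from Lemma~\ref{curvaturas}(3), which vanish except on the diagonal pairs $(\sigma_j,\sigma_j)$, $(\tau_j,\tau_j)$, $(\sigma_j,\tau_j)$ and $(\theta,\theta)$, where they equal $(-e_0,0)$, $(0,-e_1)$, $-\tfrac12(e_1,e_0)$ and $-\tfrac12(e_0,e_1)$. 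Then, expanding the resulting bilinear and linear forms by bilinearity and using that, at $(e_0,e_1)$, $\langle L',(e_k,0)\rangle=\partial L/\partial x_k$, $\langle L',(0,e_k)\rangle=\partial L/\partial y_k$, $\langle L''(e_j,0),(e_k,0)\rangle=\partial^2L/\partial x_j\partial x_k$, and likewise for the mixed and $y$-entries, I would collect terms block by block and read off the stated formulas for $M_{\sigma\sigma}$, $M_{\sigma\tau}$, $M_{\tau\tau}$, $M_{\sigma\theta}$, $M_{\tau\theta}$ and $M_{\theta\theta}$.

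The computation is entirely mechanical and carries no real obstacle; the only thing demanding care is the bookkeeping of numerical coefficients — the $\tfrac1{\sqrt2}$ coming from the normalized tangent vector in the $\theta$-direction and the $\tfrac12$'s coming from the curvatures $\tfrac12(e_0,e_1)$ and $\tfrac12(e_1,e_0)$ — together with remembering that for $j\neq k$ the second-derivative term of the chain rule disappears, so that only the $\langle L''\cdot,\cdot\rangle$ piece contributes off the diagonal.
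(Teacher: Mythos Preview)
Your proposal is correct and follows exactly the same approach as the paper: the corollary's proof is in fact embedded in its statement, which carries out precisely the second-order chain rule computation you describe, substituting the first and second derivatives of $\psi$ at $0$ from Lemma~\ref{curvaturas} and reading off the entries block by block. There is no additional idea beyond the mechanical bookkeeping you identify.
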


\bigskip

\proofof{Lemma~\ref{condincond}} We have:

\begin{equation}\label{espcondg}
\E \big( g(X)\, /\,XY+Z=u, Y=0\big)=\int_{\R^{p\times q}}g(x)\frac{p_{X,Y,XY+Z}(x,0,u)}{p_{Y,XY+Z}(0,u)}~dx
\end{equation}
since
$$
\frac{p_{X,Y,XY+Z}(x,0,u)}{p_{Y,XY+Z}(0,u)}
$$
is the conditional density of $X$ at the point $x$, given that $Y=0, XY+Z=u$.\\
Now, the density $p_{X,Y,XY+Z}(x,y,u)$ is easily computed from the change of variables formula (using the independence of $X,~Y,~Z$), obtaining:
$$
p_{X,Y,XY+Z}(x,y,u)=p_X(x)p_Y(y)p_Z(u-xy).
$$
This also implies
$$
p_{Y,XY+Z}(0,u)=\int_{\R^{p\times q}}p_{X,Y,XY+Z}(x,0,u)~dx
=p_Y(0)p_Z(u).
$$
Replacing $p_{Y,XY+Z}(0,u)$ by $p_Y(0)p_Z(u)$
in (\ref{espcondg}), we get:
$$
\E \big( g(X)\, /\, XY+Z=u, Y=0\big)=\int_{\R^{p\times q}}g(x)p_X(x)~dx=\E \big( g(X)\big)
$$
\eproof

\medskip

\proofof{Lemma~\ref{polcaraccasi}}  Write the Taylor expansion of $\det (C_q(\lambda))$ at
$\lambda =0$ and compute the successive derivatives at this point.
\eproof

\medskip

\proofof{Lemma~\ref{detbasico}}  We note that $ Q=M\,M^t $ where
$$
   M:=\stackrel{\bajo n\hspace*{10pt} n}
        {\left(\begin{array}{c|c}
       A&B\\ [3pt] \hline \alto C&0
       \end{array}\right)}
       \begin{array}{l}
       {\scriptstyle k} \\ [4pt]
       {\scriptstyle n-1}
       \end{array}.
$$
Applying the Cauchy-Binet formula~\eqref{CB}, we get

\begin{equation}\label{CBforQ}
\det(Q)=\sum_{\# (S')=k+n-1}  \big(\det (M^{S'} )\big)^2,
\end{equation}
where the sum is over all choices of $k+n-1$ columns of $M$.
\\

\noindent  We fix such an $S'$. It is easy to see, performing  a
Laplace expansion with respect to the first $k$ rows of the obtained
matrix, that if we take strictly more than $k$ columns in the
$n$-columns right block corresponding to $B$, then $\det (M^{S'}
)=0$. This is because in this expansion there will always remain  a
zero column. Therefore, we can only choose up to $k$ columns in the
right block, i.e. there are two cases: we choose all the $n$ columns
in the left block and $k-1$ columns in the right block, or we choose
$n-1$ columns in the left block and $k$ columns in the right block.

\smallskip
\noindent \emph{Case 1:} $ M^{S'}$ is of  the form:
$$
M^{S'}= \stackrel{\bajo n\hspace*{10pt} k-1}
        {\left(\begin{array}{c|c}
        A&B^{S}\\ [3pt] \hline
       \alto C&0
       \end{array}\right)}
       \begin{array}{l}
       {\scriptstyle k} \\ [4pt]
       {\scriptstyle n-1}
       \end{array}\ \in \R^{(k+n-1)\times (k+n-1)}.
$$
 Here $S$ is the set of $(k-1)$ columns of $B$ that we kept.
 Again using  Laplace expansion with respect to the  last $k-1$
columns of $M^{S'}$, we see that each non-zero determinant
corresponds to suppressing a row --say row $i$-- of $B^S$, times the
determinant of its complementary matrix which is equal to the $i$-th
row of $A$ added to $C$. Finally, expanding  this last matrix by the
$i$-th row of $A$, we obtain:
$$\det(M^{S'})=(-1)^{n(k-1)}\sum_{i=1}^k(-1)^{k-i}\det(B_{\overline i}^S \sum_{j=1}^n (-1)^{j-1}a_{ij}\det(C^{\overline j}),$$
where $\overline{i}$ and $\overline{j}$ denote the complementary
rows or columns, accordingly.

\smallskip
\noindent  \emph{Case 2:} $ M^{S'}$ is of  the following form for
some $j$ which corresponds to the suppressed
 column of $A$
 and $S$ is a choice of $k$ columns of $B$:
$$
M^{S'}= \stackrel{\bajo n-1\hspace*{10pt} k}
        {\left(\begin{array}{c|c}
        A^{\overline{j}}&B^{S}\\ [3pt] \hline
        \alto C^{\overline j}&0
       \end{array}\right)}
       \begin{array}{l}
       {\scriptstyle k} \\ [4pt]
       {\scriptstyle n-1}
       \end{array}\ \in \R^{(k+n-1)\times (k+n-1)}.
$$
Then, permuting the two blocks of rows and since the obtained matrix
is block-diagonal, we get
$\det(M^S)=(-1)^{k(n-1)}\det(C^{\overline{j}})\det(B^{S})$.\\

\noindent  Therefore,  the sum in (\ref{CBforQ}) for all $S'$ in
Case 2 gives:
$$
\sum _{j=1}^n \big( \det (C^{\overline{j}}) \big)^2\sum _{\#
(S)=k}\big( \det (B^{S} ) \big)^2= \det (CC^t)\det (BB^t),
$$
again by the Cauchy-Binet formula \eqref{CB}.
The statement  follows from adding up over all $S'$ in Cases~1
and~2. \eproof

\medskip

\proofof{Lemma~\ref{momlambda}}  The proof is based on the following bound for the tails of
the probability distribution of $\overline{\lambda}$. For $t>0$ one
has (see for example~\cite{DaSz} and references therein):

\begin{equation*}
\P (\overline{\lambda} \geq 2+\sqrt{2}~t)< \exp \big(-\frac{nt^2}{2}
\big).
\end{equation*}
Therefore, since  $\ell \leq n$,

\begin{equation*}
\aligned \E \big( \overline{\lambda}^\ell \big)\ &=\
\int_0^{+\infty}\P (\overline{\lambda}^\ell >x)\,dx \ =
\ \int_0^{+\infty}\P (\overline{\lambda} >y)~\ell y^{\ell -1}\,dy\\
\ & \leq \ \int_0^4\ell y^{\ell -1}\,dy\ +\ \int_4^{+\infty}\ell\,
y^{\ell -1}\,\exp \Big( -\frac{n}{2} \cdot \frac{(y-2)^2}{2}\Big)\,
dy\\
\ &\leq \ 4^\ell\ +\ \int_{\sqrt{2n}}^{+\infty}\ell
\,\sqrt{\frac{2}{n}}\,\Big(\sqrt{\frac{2}{n}}u+2 \Big)^{\ell
-1}\,\exp \Big(-\frac{u^2}{2} \Big)\,
du\\
\ &=\ 4^\ell\ +\ \ell \,\sqrt{\frac{2}{n}}\,2^{\ell
-1}\,\int_{\sqrt{2n}}^{+\infty}\Big(1+\frac{u}{\sqrt{2n}}
\Big)^{\ell -1}\,
\exp \Big(-\frac{u^2}{2} \Big)\,du\\
&\leq 4^\ell+\ell \,\sqrt{\frac{2}{n}}\,2^{\ell
-1}\,\int_{\sqrt{2n}}^{+\infty}\exp
\big( u\sqrt{\frac{n}{2}}-\frac{u^2}{2}\big)\,du \quad \mbox{since} \quad 1+x\le \exp(x)\\
\ &= \ 4^\ell \ + \ \ell \,\sqrt{\frac{2}{n}}\,2^{\ell
-1}\,\exp(n/4)\,\int_{\sqrt{n/2}}^{+\infty}\exp\big(
-\frac{y^2}{2}\big)\,
dy\\
\ &\leq \  4^\ell\ +\ \ell \,\sqrt{\frac{2}{n}}\,2^{\ell
-1}\,\exp\Big(\frac{n}{4}\Big)\,\sqrt{\frac{2}{n}}\,\exp
\Big(-\frac{n}{4}\Big)\ \leq \ 2\cdot 4^\ell,
\endaligned
\end{equation*}
where in the last line we used that
\begin{equation}\tag*{\qed}
\int_{a}^{+\infty}
\exp\Big(-\frac{y^2}{2}\Big)\,dy < \int_{a}^{+\infty}
\frac{y}{a}\,\exp\Big(-\frac{y^2}{2}\Big)\,dy
=\frac{1}{a}\exp\Big(-\frac{a^2}{2}\Big).
\end{equation}

\bigskip
\noindent{\bf Acknowledgment.}\quad We are thankful to the anonymous referee for his many suggestions that helped us improving the
presentation of this text.
\bigskip

{\small

}
\end{document}